\newcommand{\seqnum}[1]{\href{http://oeis.org/#1}{\underline{#1}}}
\definecolor{my-blue}{rgb}{0.0,0.0,0.6}
\definecolor{my-purple}{RGB}{9,0,255}
\definecolor{my-red}{rgb}{0.5,0.0,0.0}
\definecolor{my-green}{rgb}{0.0,0.5,0.0}
\definecolor{nicos-red}{rgb}{0.75,0.0,0.0}
\definecolor{nicos-green}{rgb}{0.0,0.75,0.0}
\definecolor{light-gray}{gray}{0.6}
\definecolor{really-light-gray}{gray}{0.8}
\definecolor{sussexg}{rgb}{0.0,0.5,0.5}
\definecolor{sussexp}{rgb}{0.5,0.0,0.5}
\definecolor{purple(x11)}{rgb}{0.63, 0.36, 0.94}
\newcommand{\idPF}{\mathrm{PF}_n(\mathrm{id})}
\newcommand{\id}{\mathrm{id}}
\newtheorem{theorem}{\color{my-blue}{\sc Theorem}}[section]
\newtheorem{lemma}[theorem]{\color{my-blue} \sc Lemma}
\newtheorem{proposition}[theorem]{\color{my-blue} \sc Proposition}
\newtheorem{corollary}[theorem]{\color{my-blue} \sc Corollary}
\newtheorem{question}[theorem]{\color{my-blue} \sc Question}
\newtheorem{definition}[theorem]{\color{my-blue}  \sc Definition}
\numberwithin{equation}{section}
\theoremstyle{remark}
\newtheorem{remark}[theorem]{\color{my-blue} Remark}
\newcommand{\be}{\begin{equation}}
\newcommand{\ee}{\end{equation}}
\newcommand{\eqpd}{\, .}
\newcommand{\eqcom}{\, ,}
\newcommand{\stp}{\sqrt{2\pi}}
\newcommand*{\PF}{\mathsf{PF}}
\newcommand*{\NDPF}{\mathsf{PF}^{\uparrow}}
\newcommand{\x}{\textbf{x}}
\newcommand{\U}{\mathcal{U}}
\def\bE{\mathbb{E}}
\def\bN{\mathbb{N}}
\def\bZ{\mathbb{Z}}
 \def\Z{\bZ}
\def\N{\bN}
\DeclareMathOperator{\Var}{Var}
\def\E{\bE}
\definecolor{partcolor1}{rgb}{0.0,0.5,0.0}
\definecolor{partcolor2}{rgb}{0.0,0.5,0.0}
\definecolor{darkgreen}{rgb}{0.0,0.5,0.0}
\definecolor{darkblue}{rgb}{0.5,0.1,0.5}
\definecolor{deepblue}{rgb}{0.25,0.41,0.88}
\definecolor{nicosred}{rgb}{0.65,0.1,0.1}
\definecolor{light-gray}{gray}{0.7}
\begin{document}
\usdate
\title[A Probabilistic Parking Process
and Labeled
IDLA]
{
A Probabilistic Parking Process
and Labeled
IDLA}

\author[Harris]{Pamela E. Harris}
\address[P. E.~Harris]{Department of Mathematical Sciences, University of Wisconsin, Milwaukee}
\email{\textcolor{blue}{\href{mailto:peharris@uwm.edu}{peharris@uwm.edu}}}

\author[Holleben]{Thiago Holleben}
\address[T.~Holleben]{Department of Mathematics \& Statistics,
Dalhousie University}
\email{\textcolor{blue}{\href{mailto:hollebenthiago@dal.ca}{hollebenthiago@dal.ca}}}
\author[Mart\'inez Mori]{J. Carlos Mart\'inez Mori}
\address[J.~C. Mart\'inez Mori]{H. Milton Stewart School of Industrial and Systems Engineering, Georgia Institute of Technology}
\email{\textcolor{blue}{\href{mailto:jcmm@gatech.edu}{jcmm@gatech.edu}}}

\author[Priestley]{Amanda Priestley}
\address[A.~Priestley]{Department of Computer Science, The University of Texas at Austin}
\email{\textcolor{blue}{\href{mailto:amandapriestley@utexas.edu}{amandapriestley@utexas.edu}}}

\author[Sullivan]{Keith Sullivan}
\address[K.~Sullivan]{Department of Mathematics and Statistics, University of Vermont}
\email{\textcolor{blue}{\href{mailto:Keith.Sullivan@uvm.edu}{Keith.Sullivan@uvm.edu}}}

\author[Wagenius]{Per Wagenius}
\address[P.~Wagenius]{Department of Mathematics and Statistics, University of Vermont}
\email{\textcolor{blue}{\href{mailto:Per.Wagenius@uvm.edu}{Per.Wagenius@uvm.edu}}}

\keywords{Parking Functions, Probabilistic Parking Process, IDLA, Interacting Particle Systems}
\subjclass[2020]{Primary:  60C05;  Secondary: 60K35, 60J05}
\date{\today}

\begin{abstract}
In 1966, Konheim and Weiss \cite{konheimWeiss} introduced a now classical parking protocol. The deterministic process and its resultant objects, known as \textit{parking functions}, have since become a favorite object of study in enumerative combinatorics. In our work, we introduce and study a probabilistic variant of the classical parking protocol, which is closely related to Internal Diffusion Limited
Aggregation, or IDLA, introduced in 1991 by Diaconis and Fulton \cite{diaconis1991growth}. In particular, we compute the stationary distribution of this process when initiated with a particular class of initial preferences, of which weakly increasing parking functions are a subset. Furthermore, we compute the expected time it takes for the protocol to complete assuming all of the cars park, and prove that, in some cases, the parking process is negatively correlated.
In addition, we study statistics of uniformly random weakly increasing parking functions such as the distribution of 
the last entry, the probability that a specific set of cars is lucky, and the expected number of lucky cars.
\end{abstract}
\maketitle

\section{Introduction}
\label{sec: introduction}

In 1966 Konheim and Weiss \cite{konheimWeiss} introduced a now classical parking problem which is defined as follows. 
Fix $n \in \N= \{1, 2, 3, \ldots\}$ and consider a sequence of $n$ cars, each given labels in $[n] \coloneqq \{1,2,\dots,n\}$, attempting to park on the sites of the line segment $[n]$ according to the following protocol.
Initially, each car records its preferred parking spot in the preference list $\alpha \coloneqq (\alpha_1,\alpha_2, \dots, \alpha_n) \in [n]^n$, with $\alpha_i$ being the preference of car $i$.
The cars then enter the street from left to right, and attempt to park in increasing order of their labels. 
If their preferred parking spot is unoccupied, they are able to immediately park in their desired spot.
Otherwise, each car continues to drive to the next available spot to the right and parks there. 
If there is no such parking spot available, the car exits the street and is unable to park.
The set of preference lists $\alpha$ such that all cars can park under the aforementioned protocol are called \textit{parking functions}, and the set of all parking functions with $n$ cars is denoted by $\PF_n$. 
To give a few simple but illustrative examples, the preference list $(1,1,1, \dots, 1)$ is a parking function, as is any permutation of $[n]$. 
We let $S_n$ denote the set of permutations of $[n]$, and we write permutations in one-line notation. 
However, the preference list $(n,n,n, \dots, n)$ is not a parking function, as the second car already fails to park. 
Konheim and Weiss, interested in studying hash functions,  initiated the study of these combinatorial objects by proving that $|\PF_n| = (n+1)^{n-1}$.
Since then, classical parking functions and several new variants \cite{colaric2020interval, countingKnaples, fang2024vacillatingparkingfunctions} have been of great interest to the enumerative combinatorics community.

In addition, parking functions, along with their restrictions and generalizations, have connections to a vast range of mathematical fields. Past work connects parking functions to computing volumes of polytopes, hyperplane arrangements, representation theory, sorting algorithms, enumerating Boolean intervals in the weak Bruhat order of the symmetric group, facet structure of the permutahedron, and even to the continued fraction expansion of $\sqrt{2}$ and to the Tower of Hanoi game \cite{Aguillon2022OnPF,MR3724106,Harris2023LuckyCA,unit_perm,garcia2024defectiveparkingfunctionsyoung,fang2024vacillatingparkingfunctions,elder2025parking,MR1902680,HicksDiagonalHarmonics,MR3551576}. 
For a brief introduction to parking functions and some of their properties, we recommend \cite{mori2024parkingfunction}. For a detailed overview of known results about parking functions, we point the reader to Yan \cite{yan2015parking}, and for many open problems involving parking functions we recommend \cite{carlson2020parking}. 

In this work, we will be interested in discussing the behavior of a probabilistic parking protocol when initiated with a preference vector which is a parking function (in the classical sense) having a specific \textit{outcome permutation}, where the outcome permutation is the final order of the parked cars at the end of the classical parking process.
In particular, we will consider parking functions whose outcome is the identity permutation, a set which we denote $\idPF$.
For example, $(1,2,1)\in \PF_3(\id)$, but $(2,1,3)\notin\PF_3(\id)$ since the cars do not park in the order of the identity permutation.

We will also discuss statistics of weakly increasing parking functions, which are parking functions whose entries are weakly increasing.
More formally, we have the following definition.
\begin{definition}[Weakly Increasing Parking Function]\label{def:WeaklyIncreasingParkingFunction}
    A weakly increasing parking function of length $n \geq 1$ is a preference list $\alpha = (\alpha_1, \alpha_2, \ldots, \alpha_n) \in [n]^n$ such that  $\alpha_i \leq i$ for all $i \in [n]$ and $\alpha_1 \leq \alpha_2 \leq \cdots \leq \alpha_n$. 
    We denote the set of weakly increasing parking functions of length $n \geq 1$ by $\NDPF_n$.
\end{definition}
Weakly increasing parking functions are known Catalan objects (i.e., combinatorial objects enumerated by the Catalan numbers, for more on Catalan objects see \cite{catalan_stanley}). 
Moreover, the outcome of any weakly increasing parking function is the identity permutation (i.e., $\NDPF_n\subseteq \idPF$).
However, note that the converse is not true: for example, $(1,2,1) \in \PF_3$ parks in the order $123$, but is not weakly increasing.

The classical parking process, and resultant parking functions, have steadily increased in popularity as objects of study in the probability community.
Recently there have been several works studying phase transitions in the parking process on random graphs such as random trees \cite{aldous2023parking, contat2022sharpness,contat2023parking,contat2023parking1,contat2025parking,curien2019phase, goldschmidt2019parking}, and Frozen Er\H{o}s-R\'{e}nyi graphs \cite{ContatCurien}.
Diaconis and Hicks \cite{diaconis2017probabilizing}, on the other hand, considered the question \textit{``What does a random parking function look like?''}.
They gave probabilistic interpretations of some known generating functions, and used these results to derive statistics about parking functions. 

However, there are relatively few works which introduce randomness into the parking protocol itself. 
Durmi\'{c}, Han, Harris, Ribeiro, and  Yin \cite{durmic2022probabilistic} give one such model. 
In their parking protocol, whenever a car attempting to park finds its preferred parking spot occupied, with probability $p$ the car proceeds as in the classical parking protocol, and parks in the first empty spot it finds to the right. With probability $q\coloneqq 1-p$ the car reverses the classical parking protocol, and searches backwards in an attempt to park in the first open spot it finds to the left. 
Remarkably, the probability of obtaining a parking function is shown to be independent of the forward probability $p$. 
They also provided some combinatorial consequences of the probabilistic parking protocol, which gave interpretations of the triangular arrays produced by a generating polynomial related to the study of duplicial algebras and Lagrange inversion \cite[Theorem 8]{durmic2022probabilistic}. 

Motivated by the results of Diaconis and Hicks~\cite{diaconis2017probabilizing} and Durmi{\'c} et al.~\cite{durmic2022probabilistic}, we study statistics of a model of stochastic parking when it is started with an initial preference list which is a parking function whose outcome is the identity permutation. 
Note, that there are $n!$ parking functions with outcome the identity permutation, namely those preferences $\alpha \coloneqq (\alpha_1,\alpha_2,\ldots,\alpha_n)$ satisfying $a_i\leq i$ for all $i\in[n]$. For a proof that this property indeed characterizes the elements of $\idPF$, see Lemma~\ref{lem:PFWhoseOutcomeIsIdentity} below.  
Our contributions can be broadly categorized into two themes:
\begin{enumerate}
    \item
    Analyze properties of two different models of stochastic parking which we describe in detail in Section~\ref{sec:IntroducingTheModels} below, and \label{it: 1}
    \item 
    study statistics of the entries of uniformly random weakly increasing parking functions. \label{it: 2}
\end{enumerate}
We show that, when considering the set of parking functions whose outcome permutation is the identity permutation, both of our models of stochastic parking are almost completely characterized by the classical \textit{gambler's ruin} problem (for an overview of the subject, see \cite{feller1968introduction,grinstead2006grinstead,levinPeresWilmer}, which offer excellent expositions).
Therefore, our goal with \eqref{it: 2} is to characterize properties of a subset of these parking functions which allow us to obtain succinct expressions for \eqref{it: 1}.

In the remainder of this section we begin by giving a technical description of the models we consider. 
This is followed by the statements of our main contributions.
We conclude by relating our randomized model of parking with other more classical stochastic processes in the literature.

\subsection{Models of Stochastic Parking}\label{sec:IntroducingTheModels}
We now introduce our models of stochastic parking.
Let $n \in \mathbb{N}$ be the number of cars and consider a preference list $\alpha = (\alpha_1, \alpha_2, \ldots, \alpha_n) \in [n]^n$.
As in the classical model, cars arrive one at a time, and a car may enter the street only after its predecessor has completed its search process.
Upon the arrival of car $i \in [n]$, it attempts to park in its preferred spot $\alpha_i \in [n]$.
If spot $\alpha_i$ is unoccupied, the car parks there.
Otherwise, with probability $p\in [0,1]$, it displaces one spot to the right and attempts to park in spot $\alpha_i + 1$ so long as $\alpha_i + 1 \in [n]$.
Conversely, with probability $q\coloneqq1 - p$, it displaces one spot to the left and attempts to park in spot $
\alpha_i - 1$ so long as $\alpha_i -1 \in [n]$.
In this way, car $i$ performs a random walk starting from its preferred spot $\alpha_i$, and parking in the first unoccupied spot on the line segment $[n]$ it encounters, if any.
Note that the case in which $p = 1$ reduces to the original protocol of Konheim and Weiss~\cite{konheimWeiss}, while the case of $p=0$ corresponds to running the classical protocol from the right end of the segment to the left.
Now, we distinguish two different variants of this model depending on the dynamics at the \emph{boundaries}, namely spots $1$ and $n$:
 \begin{enumerate}[label=(\alph*)]
    \item 
    \emph{Unbounded model.}
    Cars are free to drift around the infinite set $\mathbb{Z}$.
    However, cars can only park in the finite set $[n]$.
    \item
    \emph{Open boundary.}
    Cars are free to drift around the finite set $[n]$.
    If a car exits this set (i.e., if it enters the set $\mathbb{Z} \setminus [n]$), it immediately terminates its search process and is unable to park.
    In particular, if a car is in spot $n$ (resp., spot $1$) and randomly displaces to spot $n + 1$ (resp., spot $0$), it cannot re-enter the set $[n]$.
\end{enumerate}
Our question of interest in this context is the following.
\begin{question}\label{quest:CentralQuestion}
Fix $\alpha \in [n]^n$ and $\sigma \in S_n$. What is the probability that the probabilistic parking protocol iniated with preferences $\alpha$ allows all of the cars to park with outcome $\sigma$?
\end{question}

\begin{figure}[ht]
    \centering
\begin{tikzpicture}[>=stealth, thick, scale = .7]

\draw (-3,0) ellipse (2.5 and 5);   
\draw ( 3,0) ellipse (2.5 and 5);   
\node at (-6,4) {$[n]^n$};
\node at ( 6,4) {$S_n$};

\draw (-3,2.5) ellipse (1 and 1.0);
\node at (-3,4) {\small $\{\alpha_i \le i, \forall i\}$};

\draw (-3,-3) ellipse (1 and 1.0);
\node at (-3,-1.6) {\small $\{\alpha_i \ge n+1-i, \forall i\}$};

\node[circle,fill=black,inner sep=1.5pt
] (a1) at (-3, 3) {};
\node[circle,fill=black,inner sep=1.5pt,
] (a2) at (-3, 2) {};
\node[circle,fill=black,inner sep=1.5pt
,label=left:$\alpha$
] (a5) at (-3,0) {};

\node[circle,fill=black,inner sep=1.5pt,
](a3) at (-3,-3.5) {};
\node[circle,fill=black,inner sep=1.5pt
] (a4) at (-3,-2.5) {};

\node[circle,fill=black,inner sep=1.5pt,label=right:$id$] (b1) at ( 3, 3.5) {};
\node[circle,fill=black,inner sep=1.5pt,label=right:$\sigma$] (b2) at ( 3, 1.5) {};
\node[circle,fill=black,inner sep=1.5pt,label=right:$\sigma'$] (b4) at ( 3, 0.4) {};
\node[circle,fill=black,inner sep=1.5pt,label=right:$\sigma''$] (b5) at ( 3, -.7) {};
\node[circle,fill=black,inner sep=1.5pt,label=below:$\mathrm{Rev}(id)$] (b3) at ( 3,-3) {};

\draw[->] (a1) -- (b1); 
\draw[->] (a2) -- (b1); 
\draw[->] (a3) -- (b3); 
\draw[->] (a4) -- (b3); 
\draw[->] (a5) -- (b2); 
\draw[->] (a5) -- (b4); 
\draw[->] (a5) -- (b5); 

\end{tikzpicture}
\caption{
   Illustration of the probabilistic parking protocol as a random mapping from $[n]^n$ to $S_n$. The dynamics induce a distribution over outcome permutations, and our main question of interest is to characterize this distribution $\Pr\left[ \operatorname{Out}(\alpha) = \sigma\right]$ where $\alpha$ and $\sigma$ are fixed, and we use $\operatorname{Out}(\alpha)$ to denote the outcome of $\alpha$ under the probabilistic parking protocol.
}
\label{fig:Mapping}
\end{figure}

\subsection{Contributions}\label{sec:contributions}
In our work we consider two subsets of parking functions, the set of weakly increasing parking functions and a superset consisting of those parking functions with outcome the identity permutation, which can be characterized as follows.

\begin{lemma}\label{lem:PFWhoseOutcomeIsIdentity}
    A preference vector $\alpha=(\alpha_1,\alpha_2,\ldots,\alpha_n)\in[n]^n$ satisfies $\alpha \in \PF_n(\id)$ if and only if its entries satisfy $\alpha_i \leq i$ for all $i \in [n]$.
\end{lemma}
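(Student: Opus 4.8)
The plan is to prove the two implications separately, with the forward direction being essentially immediate and the reverse direction following from a short induction on the car label. Throughout I use the convention that the outcome being the identity permutation means precisely that car $i$ occupies spot $i$ at the end of the process, for every $i \in [n]$.

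For the forward direction, suppose $\alpha \in \PF_n(\id)$. Then for each $i \in [n]$ car $i$ parks in spot $i$. Since in the classical protocol a car only ever drives to the right of its preferred spot, car $i$ parks in some spot that is at least $\alpha_i$. As car $i$ parks in spot $i$, we conclude $\alpha_i \leq i$, giving the claimed inequality for all $i \in [n]$.

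For the reverse direction, suppose $\alpha_i \leq i$ for all $i \in [n]$. I would show by induction on $i$ that, when the classical protocol is run with preference list $\alpha$, car $i$ parks in spot $i$; this simultaneously shows that all $n$ cars park (so $\alpha \in \PF_n$) and that its outcome is the identity. The base case $i = 1$ is forced, since $\alpha_1 \leq 1$ gives $\alpha_1 = 1$, so car $1$ parks in spot $1$. For the inductive step, assume cars $1, \ldots, i-1$ have parked in spots $1, \ldots, i-1$, so that spots $1, \ldots, i-1$ are occupied and spots $i, \ldots, n$ are empty when car $i$ arrives. Car $i$ begins at spot $\alpha_i \leq i$. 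If $\alpha_i = i$, then spot $i$ is empty and car $i$ parks there directly; if $\alpha_i < i$, then driving rightward car $i$ finds each of spots $\alpha_i, \alpha_i + 1, \ldots, i-1$ occupied but spot $i$ empty, so it parks in spot $i$. In either case car $i$ parks in spot $i$, completing the induction.

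I do not expect a serious obstacle: the argument is a clean induction. The one point requiring care is pinning down the meaning of ``outcome the identity permutation'' (that car $i$ lands in spot $i$), together with the observation used in both directions that forward drift guarantees a car never parks strictly to the left of its preferred spot. With these conventions fixed, both implications follow at once.
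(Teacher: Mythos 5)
Your proof is correct and follows essentially the same route as the paper's: the forward direction rests on the observation that a car never parks strictly left of its preference, and the reverse direction is the same induction showing car $i$ parks in spot $i$. The only difference is that the paper first invokes the classical counting characterization $\left|\left\{k : \alpha_k \leq i\right\}\right| \geq i$ to conclude $\alpha \in \PF_n$ before running its induction, whereas your induction certifies that all cars park directly --- a slightly more self-contained variant of the same argument.
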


\begin{proof}
  Assume $\alpha_i > i$ for some $i$. This contradicts the assumption that  $\alpha$ is a parking function whose outcome is the identity permutation.

    Conversely, an alternate characterization of parking functions says that, $\alpha \in [n]^n$ is a parking function if and only if the number of entries of $\alpha$ with value at most $i$ is at least $i$ for all $i\in [n]$. That is, 
    \[
    \left|\left\{k: \alpha_k \leq i\right\}\right| \geq i, \quad \quad \mbox{for all } 1 \leq i \leq n \eqpd 
    \]
    Thus, necessarily, if $\alpha_i \leq i$ for all $ i \in [n]$, $\alpha$ must be a parking function of length $n$. Furthermore, if $\alpha_i \leq i$ for every $i \in [n]$, then we may proceed by strong induction. It is clear that $\alpha_1 = 1$. Next, assume that for some $1 \leq j < n$, every $k$th car, where $1 \leq k \leq j$ parks in spot $k$. Then since $\alpha \in \PF_n$ the $(k+1)$th car parks in some spot $k <  b_{k + 1} \leq k + 1$, hence $b_{k + 1} = k + 1$ and the result holds.
\end{proof}

Notice that, not only does this characterize the set of parking functions with outcome permutation being the identity in the classical parking protocol of Konheim and Weiss, but a similar proof shows that the same condition characterizes the set of preferences in which, if all of cars are to park in the probabilistic parking protocol (with either boundary condition), then they must do so in the order of the identity permutation. In other words, there are no other (and no fewer) preferences which would allow all of the cars to park in the identity configuration in the probabilistic setting defined above. See Figure ~\ref{fig:Mapping} for an illustration of the idea. For both of our probabilistic models, we give expressions for the probability of parking and expected time to park (i.e., a random variable analogue of the displacement statistic), assuming we start with a preference vector $\alpha \in \idPF$ and conditioned on ultimately parking. Namely, we answer Question \ref{quest:CentralQuestion}
for two subsets of preferences, those which are parking functions with outcome being the identity permutation, and those with outcome being the reverse of the identity.

In the following, for each $i\in[n]$, we let $d_i \coloneqq i - \alpha_i$ denote the deterministic displacement of car $i$ for a parking function $\alpha=(\alpha_1,\alpha_2,\ldots,\alpha_n) \in \idPF$.
In addition, we let $X_{\alpha}^i$ be an indicator random variable which is $1$ if car $i$ parks under the stated protocol and preference list $\alpha$, and $0$ otherwise. 
Similarly, we let $X_{\alpha}$ be an indicator random variable which is $1$ if all of the cars park under the stated protocol and preference list $\alpha$, and $0$ otherwise. Let $Z_t^i$ be the random variable keeping track of the position of car $i$ at time $t$, and for each car, we define the stopping time 
\[\tau_{\alpha}^{i} \coloneqq \min\{ t \geq 0\ : Z_{t}^i = i \mbox{ or } Z_{t}^i = \infty \} \eqcom \]
when considering the unbounded model and 
\[\tau_{\alpha}^{i} \coloneqq \min\{ t \geq 0\ : Z_{t}^i = i \mbox{ or } Z_{t}^i = 0 \} \eqcom  \] in the case of the model with open boundaries. Furthermore, let $\tau_\alpha$ denote the time it takes the entire protocol to complete under the specified boundary condition. In particular, we have 
\[
\E\left[\,\tau_{\alpha} \mid X_{\alpha}=1\right]
=\sum_{i=1}^n \E\left[\,\tau_{\alpha}^i \mid Z^i_{\tau^i_{\alpha}}=i \right]\eqpd
\]
Since we always assume that we start the probabilistic parking protocol using $\alpha \in \idPF$, we have that the first car is always a \textit{lucky car}. 
In general, any car that parks in its preferred parking spot immediately is called a lucky car. Thus, in particular, the first car to park (the car with preference $\alpha_1$) can always park in its preferred parking spot. 
Therefore
\[
    \Pr[X_{\alpha}^1 = 1] = 1  \ \ \  \mbox{ and }  \ \ \ \E[\tau^1_{\alpha}] = 0\eqpd
\]

In Section~\ref{sec:UnboundedModel}, we prove the following main result.
\begin{theorem}[Unbounded Model]\label{thm:unboundedModelIntro}
Let $\alpha = (\alpha_1, \alpha_2, \ldots, \alpha_n) \in \idPF$. 
In the probabilistic parking protocol under the unbounded model we have the following.
\begin{enumerate}
    \item 
    If $1 < i \leq n$, then
\begin{equation*}
    \Pr\left[X_\alpha^i = 1 \mid\prod_{j=1}^{i-1} X_\alpha^{j} = 1\right] 
    = 
    \sum_{\ell = 0}^\infty \frac{d_i}{\ell + d_i}\binom{2\ell + d_i - 1}{\ell} p^{d_i + \ell}q^{\ell}\eqpd
\end{equation*}
Moreover, $\Pr\left[X_\alpha^i = 1 \mid\prod_{j=1}^{i-1} X_\alpha^{j} = 1\right] = 1$ if and only if $1/2 \leq p \leq 1$.

    \item 
    If $1 < i \leq n$ and $1/2 < p \leq 1$, then
\begin{equation*}
    \E\left[\,\tau_\alpha^i \mid \prod^{i}_{j=1}X_\alpha^{j} = 1\right] 
    = 
    \frac{d_i}{p-q}\eqpd
\end{equation*}
\end{enumerate}
\end{theorem}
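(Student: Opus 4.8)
The plan is to reduce the statement to a classical first-passage computation for a biased simple random walk, and then to evaluate the expected hitting time by an optional-stopping (Wald) argument. First I would invoke the characterization underlying Lemma~\ref{lem:PFWhoseOutcomeIsIdentity}: conditioning on $\prod_{j=1}^{i-1}X_\alpha^{j}=1$ forces cars $1,\ldots,i-1$ to occupy exactly the spots $1,\ldots,i-1$, so that when car $i$ enters, the sites $1,\ldots,i-1$ are full while $i,\ldots,n$ are empty. Setting $W_t\coloneqq Z_t^i-\alpha_i$, the motion of car $i$ is then a simple random walk started at $0$ taking $+1$ steps with probability $p$ and $-1$ steps with probability $q$; since every site strictly to the left of $i$ is occupied and $i$ is the first empty site to the right, car $i$ parks precisely when $W$ first reaches level $d_i=i-\alpha_i$. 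Thus $\tau_\alpha^i$ is exactly the first-passage time of $W$ to height $d_i$.

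Next I would remove the conditioning. For $1/2<p\le 1$ the walk has strictly positive drift $p-q>0$, so by the strong law $W_t\to\infty$ almost surely and level $d_i$ is attained with probability one; this is precisely the ``moreover'' clause of part~(1). Hence the event $\{X_\alpha^i=1\}=\{Z^i_{\tau^i_\alpha}=i\}$ has conditional probability $1$ given $\prod_{j=1}^{i-1}X_\alpha^{j}=1$, the conditioning in the statement is vacuous, and $\E[\tau_\alpha^i\mid \prod_{j=1}^i X_\alpha^{j}=1]=\E[\tau_{d_i}]$, where $\tau_{d_i}$ is the unconditional first-passage time of $W$ to $d_i$.

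To evaluate $\E[\tau_{d_i}]$ I would apply the optional stopping theorem to the mean-zero martingale $M_t\coloneqq W_t-(p-q)t$, whose increments are bounded by $1+|p-q|\le 2$. This gives $\E[W_{\tau_{d_i}}]=(p-q)\,\E[\tau_{d_i}]$, and since $W$ moves by unit steps there is no overshoot, so $W_{\tau_{d_i}}=d_i$ exactly; therefore $\E[\tau_{d_i}]=d_i/(p-q)$, as claimed. A variant closer in spirit to part~(1) is to note that $\tau_{d_i}$ is a sum of $d_i$ independent copies of the first-passage time $\tau_1$ to level $1$, so its probability generating function factors as $G_{d_i}=G_1^{d_i}$, where first-step analysis yields the quadratic relation $G_1=ps+qs\,G_1^2$; differentiating at $s=1$ and using $G_1(1)=1$ (valid for $p\ge 1/2$) gives $G_1'(1)=1/(p-q)$ and hence $\E[\tau_{d_i}]=d_iG_1'(1)=d_i/(p-q)$.

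The hard part will be the integrability input required to run optional stopping rigorously: one must know a priori that $\E[\tau_{d_i}]<\infty$, as otherwise the $M_t$-based identity is empty---indeed, at $p=1/2$ the hitting time is almost surely finite but has \emph{infinite} expectation, which is exactly why the statement imposes the strict inequality $1/2<p$. I would secure finiteness either through the generating-function route, where differentiability of $G_{d_i}$ at $s=1$ follows from the strictly positive drift (equivalently from the exponential decay of $\Pr(\tau_{d_i}>t)$ furnished by a standard Cram\'er bound on the lower tail of $W_t$), or through first-step analysis, which produces the recursion $\E[\tau_1]=1+2q\,\E[\tau_1]$ with unique finite solution $1/(p-q)$ once finiteness is in hand. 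Establishing this finiteness, together with the bounded-increment hypothesis for optional stopping at the unbounded time $\tau_{d_i}$, is the only genuinely technical point; the remaining manipulations are routine.
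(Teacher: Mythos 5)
Your reduction of car $i$'s motion to a first-passage problem for a biased simple random walk is exactly the paper's starting point, and your treatment of part (2) is sound and in fact more self-contained than the paper's: the paper simply invokes McMullen's gambler's-ruin results for the expected hitting time, whereas your Wald/optional-stopping argument (with the correctly identified integrability input $\E[\tau_{d_i}]<\infty$, secured via the exponential tail or via $G_1'(1)$) proves it from scratch. However, there is a genuine gap in part (1): the theorem asserts an \emph{exact series identity}, and your proposal never derives it. The coefficient $\frac{d_i}{\ell+d_i}\binom{2\ell+d_i-1}{\ell}$ is the number of walks of length $d_i+2\ell$ from $\alpha_i$ to $i$ that avoid site $i$ until the final step, and establishing this count is the combinatorial heart of part (1). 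The paper does it by observing that these path counts satisfy McMullen's ruin recurrence $c(b,k)=c(b-1,k-1)+c(b-1,k+1)$ with the stated boundary conditions, which unravels to the $\ell$th term of the $d_i$th Catalan self-convolution; your proposal contains no counterpart of this step (no ballot-problem, cycle-lemma, or Lagrange-inversion argument). Your generating-function variant has the raw material --- expanding $G_{d_i}(s)=G_1(s)^{d_i}$ with $G_1(s)=\frac{1-\sqrt{1-4pqs^2}}{2qs}$ in powers of $s$ would produce exactly these coefficients --- but you only use $G_{d_i}$ locally at $s=1$ to extract the derivative, so the series formula is never established.

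A secondary, smaller gap: the ``moreover'' clause is an if-and-only-if. You prove the ``if'' direction for $1/2<p\le 1$ via positive drift and acknowledge recurrence at $p=1/2$ in passing, but you never argue the ``only if'' direction, i.e.\ that for $0\le p<1/2$ the parking probability is strictly less than $1$ (it equals $\left(\frac{p}{q}\right)^{d_i}$, as the paper notes from McMullen). This would follow immediately from evaluating $G_1(1)=\min\left(1,\frac{p}{q}\right)$, a computation fully within reach of your setup, but it needs to be said.
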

In Section~\ref{sec:OpenBoundary}, we consider the open boundary model and prove the following main result.
\begin{theorem}[Open Boundary]\label{thm:openboundaryModelIntro}
Let $\alpha = (\alpha_{1}, \alpha_2, \ldots, \alpha_n) \in \idPF$.
In the probabilistic parking protocol with open boundary condition we have the following.
\begin{enumerate}
    \item If $1 < i \leq n$, then
\begin{equation*}
     \Pr\left[X_\alpha^i = 1 \mid \prod_{j=1}^{i-1}X_\alpha^{j} = 1\right] = 
    \begin{cases}
      \displaystyle\frac{
    \alpha_i}{i}
     &\mbox {if } \ \ p = \frac{1}{2}\\
    
        \displaystyle p^{i-\alpha_i}\frac{p^{\alpha_i}-q^{\alpha_i}}{p^i-q^i} \, & \mbox{otherwise}\eqpd
    \end{cases}
\end{equation*}
\item
If $1 < i \leq n$, then
\begin{equation*}
\E\left[\,\tau_{\alpha}~| ~ \prod^{i}_{j=1}X_\alpha^{j} = 1\right] =
 \begin{cases}
    \displaystyle\frac{1}{3}(i^2 - \alpha_i^2) 
     &\mbox {if } \ \ p =\frac{1}{2}\\
      \displaystyle\frac{1}{p-q}\frac{
    i\left(1 + \left(\frac{q}{p}\right)^i\right)
}{ \left(1 - \left(\frac{q}{p}\right)^i\right)} - \frac{\alpha_i \left(1 + \left(\frac{q}{p}\right)^{\alpha_i}\right)}{\left(1 - \left(\frac{q}{p}\right)^{\alpha_i}\right)}  \, & \mbox{otherwise}\eqpd
    \end{cases}
\end{equation*}
\end{enumerate}
\end{theorem}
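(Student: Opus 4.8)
The plan is to reduce each car's search to a classical gambler's ruin on the integer segment $\{0,1,\dots,i\}$ and then invoke the corresponding ruin probability and conditional expected duration. First I would set up the reduction. Condition on $\prod_{j=1}^{i-1}X_\alpha^j=1$. Since $\alpha\in\idPF$, Lemma~\ref{lem:PFWhoseOutcomeIsIdentity} (and the discussion following it) forces cars $1,\dots,i-1$ to occupy exactly spots $1,\dots,i-1$, while spots $i,\dots,n$ remain empty. If $\alpha_i=i$ the car parks immediately, so assume $\alpha_i<i$. Then car $i$ begins its search on the occupied block $\{1,\dots,i-1\}$ and performs a nearest-neighbor walk that steps right with probability $p$ and left with probability $q$; the first empty site it can reach on the right is $i$, while on the left it can only exit through $0$. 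Hence car $i$ parks (necessarily at spot $i$) if and only if the walk started at $\alpha_i$ hits $i$ before $0$, which is exactly gambler's ruin with starting capital $\alpha_i$, upper barrier $i$, and up-probability $p$. Crucially, the right boundary $n+1$ is never felt, since reaching any site beyond $i$ would require first stopping at the empty site $i$; this is why the answers depend only on $\alpha_i$ and $i$, not on $n$.

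For part (1) I would quote the gambler's ruin hitting probability: from $\alpha_i$ with barriers $0$ and $i$, the walk reaches $i$ before $0$ with probability $\alpha_i/i$ when $p=\tfrac12$, and $\frac{1-(q/p)^{\alpha_i}}{1-(q/p)^{i}}$ otherwise. The displayed formula then follows from the algebraic identity
\[
\frac{1-(q/p)^{\alpha_i}}{1-(q/p)^{i}}=p^{\,i-\alpha_i}\,\frac{p^{\alpha_i}-q^{\alpha_i}}{p^{i}-q^{i}},
\]
obtained by multiplying numerator and denominator by $p^{i}$. As consistency checks, $\alpha_i=i$ gives probability $1$ (the lucky-car case) and the $p\to\tfrac12$ limit recovers $\alpha_i/i$.

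Part (2), the conditional expected parking time, is the real work; the displayed $\tau_\alpha$ is to be read as the per-car time $\tau_\alpha^i$ conditioned on $\{Z^i_{\tau^i_\alpha}=i\}$, in line with the additive decomposition $\E[\tau_\alpha\mid X_\alpha=1]=\sum_i\E[\tau_\alpha^i\mid Z^i_{\tau^i_\alpha}=i]$ from the introduction. Writing $h_k$ for the probability of reaching $i$ before $0$ from $k$ and $w_k:=\E_k[\tau_\alpha^i\,\mathbf{1}\{\text{absorbed at }i\}]$, a first-step analysis gives the inhomogeneous recurrence $w_k=p\,w_{k+1}+q\,w_{k-1}+h_k$ for $1\le k\le i-1$ with $w_0=w_i=0$ (the term $h_k$ appears because each surviving step adds $1$ to the elapsed time on the success event, and $h$ is itself harmonic). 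The sought conditional expectation is $w_{\alpha_i}/h_{\alpha_i}$. In the symmetric case $h_k=k/i$, the recurrence becomes $w_{k+1}-2w_k+w_{k-1}=-2k/i$, whose cubic particular solution together with the boundary conditions yields $w_k=\frac{k}{3i}(i^2-k^2)$, so that $w_{\alpha_i}/h_{\alpha_i}=\frac13(i^2-\alpha_i^2)$. In the biased case I would solve the same recurrence using the homogeneous solutions $1$ and $(q/p)^k$ (with a resonant $k(q/p)^k$ term, since $(q/p)^k$ already solves the homogeneous equation), impose $w_0=w_i=0$, and divide by $h_{\alpha_i}$. An equivalent, cleaner route is the Doob $h$-transform: conditioning on absorption at $i$ turns the walk into a Markov chain with transition probabilities $p\,h_{k+1}/h_k$ and $q\,h_{k-1}/h_k$, and the quantity sought is the mean absorption time of this conditioned chain, computable by the standard mean-hitting-time formula.

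The main obstacle is the final simplification in the biased case: reducing the solution of the inhomogeneous recurrence, divided by $h_{\alpha_i}$, to the compact form $\frac{1}{p-q}\big(\frac{i(1+(q/p)^i)}{1-(q/p)^i}-\frac{\alpha_i(1+(q/p)^{\alpha_i})}{1-(q/p)^{\alpha_i}}\big)$. This is a bookkeeping-heavy manipulation of geometric sums and the resonant term; I would organize it by isolating the $\coth$-type combinations $\frac{1+r^m}{1-r^m}$ with $r=q/p$ and $m\in\{\alpha_i,i\}$, and verify the $p\to\tfrac12$ limit against $\frac13(i^2-\alpha_i^2)$ as a consistency test. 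Everything else—the reduction and part (1)—is standard gambler's ruin and should be short.
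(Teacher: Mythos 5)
Your proposal is correct and takes essentially the paper's approach: part (1) is the identical gambler's-ruin reduction and hitting probability, and your recurrence $w_k = p\,w_{k+1} + q\,w_{k-1} + h_k$ for the defective expectation $\E_k[\tau\,\mathbf{1}\{\text{absorbed at } i\}]$ is, after the substitution $w_s = g_s h_s$, exactly the linear system the paper derives (via a heavier explicit path decomposition) and then verifies by substitution in its appendix. The only differences are cosmetic — you obtain the recurrence by a cleaner one-step/harmonicity argument and solve it constructively via variation of parameters with the resonant $k$ and $k(q/p)^k$ terms (your computation does go through and recovers the stated closed form), and your reading of $\tau_\alpha$ in part (2) as the per-car time $\tau_\alpha^i$ correctly matches the paper's statement in Section 4.
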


The proofs of Theorems~\ref{thm:unboundedModelIntro} and~\ref{thm:openboundaryModelIntro} are based on gambler's ruin arguments.
 Theorem~\ref{thm:unboundedModelIntro} in particular invokes results from McMullen~\cite{mcmullen2020drunken}. Notice, in contrast to the model of Durmi\'{c} et al.,~\cite{durmic2022probabilistic} the probability that all of the cars park in our case retains the parameter~$p$.
Moreover, in Section~\ref{sec:NegativeCorrelation} we prove that the random variables $X_{\alpha}^i$ are negatively correlated.

Next, we state our main results for theme (\ref{it: 2}).
In Section~\ref{sec:samplingWipfs}, we analyze the distribution of the last entry of a uniformly random weakly increasing parking function, as well as its expected value. In particular, we show the following.

\begin{lemma}\label{lem:distributionOfAlphaNBackwards} Let $\alpha \sim \U(\PF_n)$ be sampled uniformly at random from the set of all weakly increasing parking functions of length $n \in \N$. For $n$ sufficiently large, and fixed $j \geq 1$,
\be\label{eq:distributionOfAlphaNBackwards}
\Pr_{\alpha \,\sim\, \mathcal{U}(\PF_n)}\left[\alpha_n= n-j \right] \sim \frac{1}{\sqrt{2}}\frac{j-1}{
4^n}.
\ee
\end{lemma}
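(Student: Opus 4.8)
The plan is to count weakly increasing parking functions by their last entry and extract asymptotics. Since $\NDPF_n$ is enumerated by the Catalan number $C_n = \frac{1}{n+1}\binom{2n}{n}$, the denominator in the claimed asymptotic must come from $|\NDPF_n| \sim \frac{4^n}{\sqrt{\pi}\,n^{3/2}}$, so the probability $\Pr[\alpha_n = n-j]$ equals $N_{n,j}/C_n$ where $N_{n,j}$ counts those $\alpha \in \NDPF_n$ with $\alpha_n = n-j$. Hence the real work is to enumerate $N_{n,j}$ and then divide.

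First I would establish a bijection that decouples the last entry. A weakly increasing parking function satisfies $\alpha_i \le i$ and is weakly increasing; fixing $\alpha_n = n-j$ forces $\alpha_i \le n-j$ for all $i$ (by monotonicity) together with the parking constraints $\alpha_i \le i$. I would argue that the prefix $(\alpha_1, \ldots, \alpha_{n-1})$ ranges over weakly increasing sequences with $\alpha_i \le i$ and $\alpha_{n-1} \le n-j$, i.e., weakly increasing parking functions of length $n-1$ whose last entry is at most $n-j$. This is a "ballot-type" count: the number of weakly increasing sequences staying under the staircase $\alpha_i \le i$ and capped at height $n-j$ at the top. Using the standard correspondence between weakly increasing parking functions and Dyck paths (or lattice paths under a diagonal), I expect $N_{n,j}$ to be a ballot number of the form $\frac{j}{n}\binom{2n-j-1}{n-1}$ or a close variant; the exact coefficient is what the $\frac{j-1}{\sqrt 2}$ in the statement is implicitly encoding after division by $C_n$.

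Next I would feed the exact formula for $N_{n,j}$ into the ratio $N_{n,j}/C_n$ and take $n \to \infty$ with $j$ fixed. The Catalan number contributes $C_n \sim 4^n/(\sqrt{\pi}\, n^{3/2})$, and the binomial coefficient in $N_{n,j}$, being of the form $\binom{2n - O(j)}{n-1}$, contributes $\sim 4^n \cdot c(j)/n^{1/2}$ after Stirling, where the polynomial prefactor in $j$ is isolated from the central binomial behavior. Matching powers of $n$ (the $n^{3/2}$ from $C_n$ against the $n^{1/2}$ from the numerator binomial, together with the explicit $\frac{j}{n}$-type factor) should collapse the $n$-dependence to $1/4^n$ and leave a linear-in-$j$ coefficient. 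I would verify that this coefficient simplifies to $\frac{1}{\sqrt 2}(j-1)$ as claimed.

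The main obstacle will be pinning down the exact finite-$n$ count $N_{n,j}$ with the correct constant and checking the boundary index conventions, since off-by-one errors in the staircase cap $\alpha_i \le \min(i, n-j)$ propagate directly into the linear coefficient in $j$; the difference between $j$ and $j-1$ in the final answer depends sensitively on whether $\alpha_n = n-j$ is counted as a strict or weak level. I would pin this down either by a clean lattice-path reflection argument or by computing $N_{n,j}$ for small $n$ and $j$ and matching against the OEIS/ballot formula before committing to the asymptotic. The Stirling asymptotics themselves are routine once the exact formula and its convention are fixed.
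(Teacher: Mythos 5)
Your overall strategy---enumerate $N_{n,j} \coloneqq \#\{\alpha \in \NDPF_n : \alpha_n = n-j\}$ exactly, divide by $C_n$, and apply Stirling---is precisely the paper's route: the paper obtains the count from its Proposition~\ref{prop:NumberOfWipf} (which, specialized to $i=n$, is your ballot-type count), then runs a Stirling computation. Your prefix decomposition is also correct: $(\alpha_1,\ldots,\alpha_{n-1})$ ranges exactly over elements of $\NDPF_{n-1}$ with last entry at most $n-j$, which yields the closed form $N_{n,j} = \frac{j+1}{n}\binom{2n-j-2}{n-j-1}$ (check: $n=4$, $j=1$ gives $\frac{2}{4}\binom{5}{2}=5$, matching direct enumeration).

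However, the final step of your plan---``matching powers of $n$ \ldots should collapse the $n$-dependence to $1/4^n$''---cannot be carried out, and this is not an off-by-one issue in conventions. Both $N_{n,j}$ and $C_n$ grow like $4^n$ times polynomial corrections: for fixed $j$, $\binom{2n-j-2}{n-j-1} \sim \frac{4^n}{2^{j+2}\sqrt{\pi n}}$ while $C_n \sim \frac{4^n}{\sqrt{\pi}\,n^{3/2}}$, so the exponentials cancel in the ratio and
\[
\Pr\left[\alpha_n = n-j\right] \;=\; \frac{(n+1)\,N_{n,j}}{\binom{2n}{n}} \;\longrightarrow\; \frac{j+1}{2^{j+2}},
\]
a positive constant, never anything of order $4^{-n}$. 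Sanity checks confirm this: for $j=1$ one has $N_{n,1} = C_{n-1}$ exactly, so the probability is $C_{n-1}/C_n \to \frac{1}{4}$, not $o(1)$; the limiting values $\frac{j+1}{2^{j+2}}$ sum to one over $j \ge 0$; and a constant-order limit is forced in any case because $\E[\alpha_n] = \frac{n(n-1)}{n+2}$ concentrates $\alpha_n$ within $O(1)$ of $n$. In other words, the statement you set out to prove is false as written, and the paper's own proof contains the corresponding slips: it specializes the prefactor to $j-1$ where it should be $j+1$, and it loses a cancellation in the Stirling computation, so that a spurious factor $2^{2n+\frac{1}{2}}$ survives into the final line. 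Your instinct to verify the exact count on small cases before committing to the asymptotic was exactly right; had you followed through, your method would have refuted the claimed estimate rather than verified it, with the correct linear-in-$j$ answer being $\frac{j+1}{2^{j+2}}$ rather than $\frac{j-1}{\sqrt{2}}\,4^{-n}$.
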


\begin{lemma}
If $\alpha \sim \mathcal{U}\left(\NDPF_n\right)$ is a uniformly random weakly increasing parking function of length $n\in\N$, then the expected value for the last entry is given by
$$
\E\left[\,\alpha_n\right] = \frac{n(n-1)}{n+2}.
$$
\end{lemma}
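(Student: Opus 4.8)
The plan is to reduce the problem to a single closed-form sum by first describing the exact distribution of the last entry and then aggregating. Since $|\NDPF_n| = C_n = \frac{1}{n+1}\binom{2n}{n}$ (these are the Catalan objects noted above), it suffices to evaluate the total
\[
\Sigma_n \coloneqq \sum_{\alpha \in \NDPF_n} \alpha_n,
\qquad\text{so that}\qquad
\E[\alpha_n] = \frac{\Sigma_n}{C_n}.
\]
To get a handle on $\Sigma_n$, I would first count the weakly increasing parking functions of length $n$ with a prescribed last entry. Writing $T(n,k)$ for the number of $\alpha \in \NDPF_n$ with $\alpha_n = k$, the constraints $\alpha_i \le i$ together with monotonicity let me peel off the final coordinate: each such $\alpha$ is obtained by appending the value $k$ to a shorter weakly increasing parking function whose last entry does not exceed $k$. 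This yields the triangular recursion $T(n,k) = \sum_{j \le k} T(n-1,j)$, whose solution is the classical ballot number from Catalan's triangle. I would establish the closed form for $T(n,k)$ either by induction on this recursion or, more transparently, by transporting $\NDPF_n$ to lattice paths weakly below the diagonal and invoking the reflection principle, checking the normalization $\sum_k T(n,k) = C_n$.

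With the distribution in hand, the aggregate is the weighted sum $\Sigma_n = \sum_k k\,T(n,k)$, and the crux of the argument is the clean identity
\[
\Sigma_n = \binom{2n}{n-2}.
\]
I expect to prove this either by evaluating the ballot-weighted sum through a generating-function computation built on the Catalan series $C(x) = \frac{1-\sqrt{1-4x}}{2x}$ — here $\sum_n \Sigma_n x^n$ should come out to $\frac{C(x)^2}{\sqrt{1-4x}}$, the known generating function of $\binom{2n}{n-2}$ — or, failing a slick manipulation, mechanically via Zeilberger's algorithm, or else by exhibiting a bijection between last-entry-weighted weakly increasing parking functions and the lattice paths counted by $\binom{2n}{n-2}$. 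Granting the identity, the conclusion is immediate: using $C_n = \frac{1}{n+1}\binom{2n}{n}$ and the binomial ratio
\[
\frac{\binom{2n}{n-2}}{\binom{2n}{n}} = \frac{n!\,n!}{(n-2)!\,(n+2)!} = \frac{n(n-1)}{(n+1)(n+2)},
\]
I obtain $\E[\alpha_n] = \Sigma_n/C_n = (n+1)\cdot\frac{n(n-1)}{(n+1)(n+2)} = \frac{n(n-1)}{n+2}$, as claimed.

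The main obstacle is precisely the evaluation $\Sigma_n = \binom{2n}{n-2}$: the weighted ballot sum does not telescope on its own, so I would either push the generating-function bookkeeping through the square-root singularity of $C(x)$ and extract the $x^n$-coefficient, or set up the bijection carefully. As a backup, I would derive a linear recurrence for $\Sigma_n$ (equivalently, for $\E[\alpha_n]$) directly from the Catalan recursion and then verify that $\binom{2n}{n-2}$ satisfies it together with the base cases; this route avoids the singularity analysis at the cost of first correctly guessing the recurrence.
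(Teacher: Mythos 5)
Your reduction is faithful to the statement and in fact parallels the paper's own route: the paper likewise extracts the distribution of the last entry (Proposition~\ref{prop:NumberOfWipf} with $i=n$ gives precisely your ballot numbers $T(n,k)=\frac{n-k+1}{n}\binom{n+k-2}{k-1}$) and then evaluates the weighted sum, via two binomial identities proved by parallel summation in the appendix rather than your generating-function/Zeilberger/bijection menu. The genuine gap is the crux identity you defer to the end: $\Sigma_n=\binom{2n}{n-2}$ is false, so none of your three proposed routes can close it. Already at $n=2$ one has $\NDPF_2=\{(1,1),(1,2)\}$, so $\Sigma_2=3$ while $\binom{4}{0}=1$; at $n=3$, direct enumeration of the five elements of $\NDPF_3$ gives $\Sigma_3=11$ while $\binom{6}{1}=6$. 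In general
\[
\Sigma_n \;=\; \sum_{k=1}^{n} k\,\frac{n-k+1}{n}\binom{n+k-2}{k-1} \;=\; \binom{2n}{n-2} + C_n\eqcom
\]
so the correct expectation is $\E[\alpha_n]=\frac{n(n-1)}{n+2}+1=\frac{n^2+2}{n+2}$ (fastest sanity check: at $n=1$ the claimed formula gives $0$, yet $\alpha_1\equiv 1$). Your Zeilberger or coefficient-extraction fallback would have terminated by refuting the identity rather than certifying it.

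The reason your reduction landed on a false identity is that the printed statement itself carries an off-by-one, and your algebra reproduced it exactly. Tracing the paper's proof of Lemma~\ref{lem:expectationOfAlphaSubNInWIPF}: after substituting Proposition~\ref{prop:NumberOfWipf}, the index shift $j\mapsto j+1$ should produce the weight $(j+1)\frac{n-j}{n}\binom{n+j-1}{j}$, but the displayed sum uses weight $j$ in place of $j+1$; the dropped term is $\sum_{j=0}^{n-1}\frac{n-j}{n}\binom{n+j-1}{j}=C_n$, i.e., the total mass of the distribution, which is exactly the missing $+1$ in the expectation (and exactly the $C_n$ by which your $\Sigma_n$ exceeds $\binom{2n}{n-2}$). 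So the failure is not in your decomposition or your ratio computation, both of which are sound, but in never running the small-case check you flagged only for the normalization $\sum_k T(n,k)=C_n$: the same check applied to the weighted sum at $n=1,2$ would have caught the error immediately. The repair is to prove $\Sigma_n=\binom{2n}{n-2}+C_n$ (your generating-function route does this cleanly), after which your final ratio yields $\E[\alpha_n]=\frac{n^2+2}{n+2}$, consistent with the asymptotic $\E[\alpha_n]\sim n-2$ one gets from the expected final-descent length of a uniform Dyck path.
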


In this section, we also give the probability that a uniformly random weakly increasing parking function of length $n\in \N$ has a specific fixed set, $L \subseteq [n]$, of lucky cars, as well as the probability that a uniformly random weakly increasing parking function has a specific fixed number of lucky cars. 
Finally, we conclude with Section~\ref{sec:future work}, where we give some directions for future research.

\subsection{Related Work}\label{relatedWork}
The probabilistic parking protocol described can easily be interpreted as an \textit{interacting particle system}
(for a thorough treatment of the subject, refer to \cite{Liggett_2004,SPITZER1970246}). 
In particular, preference vectors translate directly to initial configurations of particles (i.e., cars) on the line segment such that multiple particles are allowed on the same site.

The underlying Markov chain would have state space $[n]^n$, and the set of initial configurations are those allowing each site to contain a non-negative number of labeled particles, with the set of absorbing states being those in which each site contains zero or one particle.

More specifically, one can interpret the dynamics of our probabilistic parking process as being a generalization of the model known as Internal Diffusion Limited Aggregation, or IDLA, which was originally introduced by Diaconis and Fulton \cite{diaconis1991growth,lawler}. (IDLA is also known to be essentially equivalent to a special case of the Activated Random Walk model when the rate of a particle sleeping $\lambda$ is set to be infinite \cite{rolla2020activated}.)These models are of interest to the interacting particle system community as they exemplify a phenomena called \textit{self-organized criticality} in which the system appears to undergo a phase transition, not by way of fine tuning the parameters of the model, but rather as a natural result of the dynamics of the process.

Our model differs from this more classical model, however, as we enforce that the cars (or particles) remain distinct throughout the process, and we are interested in the positions of the labeled cars at the end of the process. That is we give a total order to the labels of the cars attempting to park, and so when several cars are assigned to the same initial spot, the car with the smallest number in the order is allowed to park immediately while other cars must perform random walks in increasing order of their labels. One can then see our result then as working towards characterizing the \textit{stationary distribution} of the Markov chain. In general, computing stationary measures for various interacting particle systems is a classical, and still lively, area of research at the intersection of probability theory and combinatorics (e.g., \cite{bramson2002characterization, CorteelSylvie2011Tcft,MartinStationary, Nestoridi_Schmid_2024,yang2023stationarymeasureshigherspin}).

Recently and independently Varin introduced the Golf Model \cite{VarinGolf}, an interacting particle system that can be seen as a generalization of IDLA. In their work, Varin also introduces a probabilistic parking protocol on the cycle $\Z/n\Z$ as a special case of the Golf Model in which each of the, now unlabeled, cars starts at a uniformly random spot on the circular parking lot.
The cars move to the right with probability $p$ and to the left with probability $1-p$ to find the nearest open parking spot, as in our process.
The focus of our works differ, however, as Varin is interested in the setting in which there are strictly more parking spots than cars. Thus, they are interested in studying the distribution of the set of unoccupied sites at the end of the process. 

In publishing this work, we were made aware that Nadeau and Tewari also investigate a general form of IDLA on $\Z$, which is essentially equivalent to our model with open boundaries. In \cite{nadeau2021qdeformationalgebraklyachkomacdonalds} the authors study at a generalization of the classical IDLA model with general jump rates, and determine the probability that the final configuration of the unlabeled particles on the line is to occupy the set of sites in $[n]$. 
The authors approach is strictly algebraic, and they are instead interested in enumerative results related to this probability, obtaining an expression for it in terms of \textit{remixed Eulerian numbers} \cite{NadeauRemixedEulerian}. More recently (and independently) Nadeau formalized this interacting particle as a parking procedure in \cite{nadeau:hal-04262134}.\\

Overall, one can view our contribution to this literature as investigating the model from a probabilistic perspective and obtaining explicit expressions for the probability of parking in terms of the parameters $p$ and $\alpha$, while still maintaining an understanding of where the cars end up parking. This allows us to adopt the perspective of the parking process as a random mapping from $[n]^n$ to $S_n$. Moreover we offer new computations for the expected time to park, as well as introduce new boundary conditions for the process.


\section{Sampling Weakly Increasing Parking Functions and Lucky Cars}\label{sec:samplingWipfs}

It is well known that the number of weakly increasing parking function of length $n$ is given by the $n$th Catalan number $C_n = \frac{1}{n+1}\binom{2n}{n}$.
Therefore, weakly increasing parking functions are in bijection with the wide variety of  \emph{Catalan objects}~\cite{stanley2015catalan}.
For example, weakly increasing parking functions of length $n$ are in bijection with Dyck paths of length $2n$, as well as with binary trees with $n + 1$ leaves.
We recall that a Dyck path is a diagonal lattice path from the origin $(0,0)$ to the $x$-axis, consisting of only ``up steps'' $U=(1,1)$ and ``down steps'' $D=(1,-1)$. 
A Dyck path of length $2k \in \mathbb{N}$ involves $k$ up steps and $k$ down steps, and ends at $(0,2k)$.
A classical bijection between Dyck paths of length $2n$ and weakly increasing parking functions of length $n$ sets the $i$th entry of the preference list to be $1$ plus the number of ``down steps'' occurring before the $i$th ``up step.''
We illustrate this bijective mapping in Figure~\ref{fig: dyck path}.
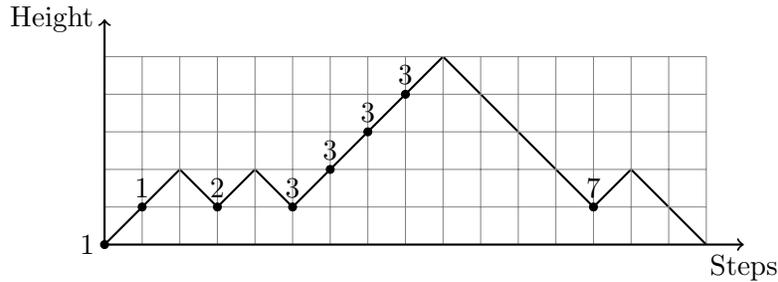
\begin{figure}[ht]
    \centering
\begin{tikzpicture}[scale=0.5]
    \draw[thick] (0,0) -- (1,1); 
    \draw[thick] (1,1) -- (2,2); 
    \draw[thick] (2,2) -- (3,1); 
    \draw[thick] (3,1) -- (4,2); 
    \draw[thick] (4,2) -- (5,1); 
    \draw[thick] (5,1) -- (6,2); 
    \draw[thick] (6,2) -- (7,3); 
    \draw[thick] (7,3) -- (8,4); 
    \draw[thick] (8,4) -- (9,5); 
    \draw[thick] (9,5) -- (10,4); 
    \draw[thick] (10,4) -- (11,3); 
    \draw[thick] (11,3) -- (12,2); 
    \draw[thick] (12,2) -- (13,1); 
    \draw[thick] (13,1) -- (14,2); 
    \draw[thick] (14,2) -- (15,1); 
    \draw[thick] (15,1) -- (16,0); 

    \draw[help lines] (0,0) grid (16,5);
    \draw[thick,->] (0,0) -- (17,0) node[anchor=north] {Steps};
    \draw[thick,->] (0,0) -- (0,6) node[anchor=east] {Height};
    \filldraw (1,1) circle (3pt); 
    \node[anchor=south] at (1,1) {1};
    \filldraw (3,1) circle (3pt); 
    \node[anchor=south] at (3,1) {2};
    \filldraw (5,1) circle (3pt); 
    \node[anchor=south] at (5,1) {3};
    \filldraw (6,2) circle (3pt); 
    \node[anchor=south] at (6,2) {3};
    \filldraw (7,3) circle (3pt); 
    \node[anchor=south] at (7,3) {3};
    \filldraw (8,4) circle (3pt); 
    \node[anchor=south] at (8,4) {3};
    \filldraw (13,1) circle (3pt); 
    \node[anchor=south] at (13,1) {7};
    \filldraw (0,0) circle (3pt); 
    \node[anchor=east] at (0,0) {1};    
\end{tikzpicture}
\caption{
    The Dyck path UUDUDUUUUDDDDUDD, of length $16$ corresponds to the weakly increasing parking function $(1,1,2,3,3,3,3,7) \in \NDPF_8$.
}
\label{fig: dyck path}
\end{figure}

Therefore, to sample a uniformly random weakly increasing parking function, one can use of one of the many methods to generate uniformly random Catalan objects, such as R\`emy's algorithm~\cite{Remy}.
Refer to  \cite{diaconis2017probabilizing} for an algorithm to sample uniformly from the set of all parking functions.

\subsection{Distribution of the Last Entry of a Weakly Increasing Parking Function}

In this section, we give asymptotic results for the distribution of the last entry of a uniformly random weakly increasing parking function. 
First, we consider the probability that a uniformly random parking function is weakly increasing. 
\begin{lemma}
Let $\alpha \sim \U(\PF_n)$ be sampled uniformly at random from the set of parking functions of length $n \in \N$.
If $n$ is sufficiently large, then
\[
    \Pr_{\alpha\, \sim \,\U(\PF_n)}\left[\alpha \in \NDPF_n\right] \sim \frac{1}{e\sqrt{\pi n}} \left( \frac{4}{n}\right)^n.
\]
\end{lemma}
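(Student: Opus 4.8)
The plan is to reduce the statement to the ratio of two known cardinalities and then apply Stirling's formula. Since $\alpha$ is sampled uniformly from $\PF_n$, the desired probability is simply
\[
\Pr_{\alpha \sim \U(\PF_n)}[\alpha \in \NDPF_n] = \frac{|\NDPF_n|}{|\PF_n|}.
\]
First I would record the two counts. By Konheim and Weiss, $|\PF_n| = (n+1)^{n-1}$, and as recalled at the start of this section, $|\NDPF_n| = C_n = \frac{1}{n+1}\binom{2n}{n}$. Substituting both and cancelling one factor of $n+1$ gives the clean exact expression
\[
\Pr_{\alpha \sim \U(\PF_n)}[\alpha \in \NDPF_n] = \frac{C_n}{(n+1)^{n-1}} = \frac{\binom{2n}{n}}{(n+1)^{n}}.
\]

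Next I would extract the asymptotics of the two factors separately. For the numerator, Stirling's approximation yields the standard central-binomial estimate $\binom{2n}{n} \sim \frac{4^n}{\sqrt{\pi n}}$. For the denominator, I would write $(n+1)^n = n^n\,(1 + 1/n)^n$ and invoke the elementary limit $(1+1/n)^n \to e$, so that $(n+1)^n \sim e\,n^n$.

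Finally, because asymptotic equivalence is preserved under quotients, dividing these two estimates gives
\[
\frac{\binom{2n}{n}}{(n+1)^n} \sim \frac{4^n/\sqrt{\pi n}}{e\,n^n} = \frac{1}{e\sqrt{\pi n}}\left(\frac{4}{n}\right)^n,
\]
which is precisely the claimed asymptotic.

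There is no serious obstacle here; the only point requiring minor care is the legitimacy of combining two independent $\sim$ relations into a single quotient. This is valid because both the Stirling estimate of the numerator and the $e$-limit for the denominator are genuine asymptotic equivalences with multiplicative error $1 + o(1)$. If one wanted fully explicit error control rather than the bare $\sim$ statement, I would instead carry the refined forms $\binom{2n}{n} = \frac{4^n}{\sqrt{\pi n}}\bigl(1 + O(1/n)\bigr)$ and $(1+1/n)^n = e\bigl(1 + O(1/n)\bigr)$ and track the resulting $1 + O(1/n)$ factor through the division, but for the stated equivalence this refinement is unnecessary.
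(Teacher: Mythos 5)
Your proposal is correct and follows essentially the same route as the paper: both reduce the probability to $\frac{1}{n+1}\binom{2n}{n}\big/(n+1)^{n-1}$, apply Stirling's approximation to obtain $\binom{2n}{n}\sim 4^n/\sqrt{\pi n}$, and use $(1+1/n)^n\to e$ to handle the denominator. If anything, your remark on why the two $\sim$ relations may be combined in the quotient is slightly more careful than the paper, which writes the final substitution of $(1+1/n)^n$ by $e$ as an equality.
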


\begin{proof}
    Recall that $|\PF_n| = (n+1)^{n-1}$ and $|\NDPF_n| = \frac{1}{n+1} \binom{2n}{n}$.
    Therefore,
    \begin{align*}
        \Pr_{\alpha\, \sim \,\U(\PF_n)}\left[\alpha \in \NDPF_n\right]
        &= \frac{\frac{1}{n+1} \binom{2n}{n}}{(n+1)^{n-1}}
        = \frac{1}{(n+1)^n} \frac{(2n)!}{(n!)^2}
        \stackrel{(*)}{\sim} \frac{1}{(n+1)^n} \frac{\sqrt{2\pi (2n)} \left(\frac{2n}{e} \right)^{2n}}{(2\pi n)\left( \frac{n}{e} \right)^{2n}} \label{eq: stirlings} \\
        &= \frac{1}{(n+1)^n} \frac{4^n}{\sqrt{\pi n}}
        = \frac{1}{n^n \left(1 + \frac{1}{n}\right)^n} \frac{4^n}{\sqrt{\pi n}}
        = \frac{1}{e\sqrt{\pi n}} \left( \frac{4}{n}\right)^n,
    \end{align*}
    where $(*)$ follows from Stirling's approximation of the factorial.
\end{proof}

Diaconis and Hicks, in \cite{diaconis2017probabilizing}, study the distribution of the first entry of a uniformly random parking function. 
As motivated by our results stated in Section~\ref{sec:contributions}, an analogous study specifically for weakly increasing parking function is also of interest.
However, in the weakly increasing setting, the first entry is always equal to $1$.
Therefore, in what follows we focus on studying the distribution of the last entry $\alpha_n$ of a uniformly random weakly increasing parking function.

In order to compute distributional information, we establish the following result which has been simultaneously and independently derived by Butler et al. \cite{butler2024lucky}.

\begin{proposition}\label{prop:NumberOfWipf} 
The number of weakly increasing parking functions $\alpha \in \NDPF_n$ with $\alpha_i = j$ for fixed $1\leq j\leq i\leq n$ is given by
    \be\label{eq:NumberOfWIPF}
    \frac{(i-j+1)(i-j+2)}{i(n-j+2)} \binom{i+j-2}{j-1} \binom{2n-i-j+1}{n-i} \eqpd
    \ee
\end{proposition}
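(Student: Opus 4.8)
The plan is to prove the formula bijectively, transporting the event $\alpha_i = j$ to the Dyck-path side of the classical bijection recalled above and then cutting the path at a distinguished step. Under that bijection, $\alpha_i = j$ says precisely that exactly $j-1$ down steps occur before the $i$th up step. So I would fix a Dyck path of length $2n$ with this property and split it at its $i$th up step as $P\cdot U\cdot S$, where $P$ is the prefix preceding the $i$th up step, $U$ is that up step, and $S$ is the remaining suffix. This splitting is a bijection between the paths we are counting and pairs $(P,S)$ of the following shape: since exactly $i-1$ up steps and $j-1$ down steps precede the $i$th up step, $P$ is a lattice path built from $i-1$ up and $j-1$ down steps that stays weakly above the axis and ends at height $i-j$; after $U$ the height is $i-j+1$; and $S$ is a path from height $i-j+1$ down to $0$ using the remaining $n-i$ up and $n-j+1$ down steps, again staying weakly above the axis. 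Because a concatenation stays nonnegative if and only if each piece does (the middle step $U$ only raises the height), the count factors as (number of valid $P$) $\times$ (number of valid $S$), and the two factors can be handled independently.

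For the prefix I would invoke the reflection principle: the number of lattice paths from height $0$ with $i-1$ up and $j-1$ down steps that never dip below $0$ is $\binom{i+j-2}{j-1} - \binom{i+j-2}{j-2}$, and a short binomial manipulation rewrites this ballot number as $\frac{i-j+1}{i}\binom{i+j-2}{j-1}$. For the suffix I would first reverse $S$, swapping up and down steps, turning it into a nonnegative path from height $0$ up to height $i-j+1$ with $n-j+1$ up and $n-i$ down steps; reflection again gives $\binom{2n-i-j+1}{n-i} - \binom{2n-i-j+1}{n-i-1}$, which simplifies using $\binom{N}{B-1} = \binom{N}{B}\tfrac{B}{N-B+1}$ with $N = 2n-i-j+1$ and $B = n-i$ (so that $N-B+1 = n-j+2$) to $\frac{i-j+2}{n-j+2}\binom{2n-i-j+1}{n-i}$. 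Multiplying the two factors yields exactly \eqref{eq:NumberOfWIPF}.

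The routine-but-delicate work is the two binomial simplifications and keeping the reflection bookkeeping straight (which height is reflected, which piece is reversed, and that the forced step multiplicities are $i-1,\,j-1$ for $P$ and $n-i,\,n-j+1$ for $S$). The genuine content, and the step I expect to be the crux, is recognizing that the condition $\alpha_i = j$ localizes cleanly at the $i$th up step, so that the path splits into two independently constrained pieces whose endpoints and step counts are completely determined. As a final consistency check I would verify the boundary cases against the closed form: $j=1$ forces the first $i$ steps to be up, giving prefix count $1$, and indeed $\frac{i-j+1}{i}\binom{i+j-2}{j-1} = 1$; while $j=i$ makes $P$ a Dyck path, giving the Catalan number $\frac{1}{i}\binom{2i-2}{i-1}$, which the formula also reproduces.
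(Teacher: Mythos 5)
Your proof is correct, and at bottom it rests on the same decomposition as the paper's: both cut the object at index $i$ and factor the count as (number of admissible prefixes) times (number of admissible suffixes), arriving at the same two Catalan-triangle numbers $\frac{i-j+1}{i}\binom{i+j-2}{j-1}$ and $\frac{i-j+2}{n-j+2}\binom{2n-i-j+1}{n-i}$. The difference is in execution. The paper works on the parking-function side for the prefix, quoting the known count of weakly increasing parking functions with prescribed maximum entry (the Catalan triangle, OEIS A009766), and dispatches the suffix with a citation to Deutsch after an unexplained ``sequence of transformations.'' You instead transport everything to the Dyck-path side, localize $\alpha_i = j$ at the $i$th up step, split the path as $P\cdot U\cdot S$, and derive both factors from scratch by the reflection principle; your observation that nonnegativity splits across the concatenation, and the explicit reverse-and-swap involution for the suffix, supply exactly the details the paper leaves implicit. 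Your bookkeeping is right: $P$ has $i-1$ up and $j-1$ down steps ending at height $i-j$; $S$ has $n-i$ up and $n-j+1$ down steps descending from $i-j+1$ to $0$; and both ballot-number simplifications, $\binom{i+j-2}{j-1}-\binom{i+j-2}{j-2}=\frac{i-j+1}{i}\binom{i+j-2}{j-1}$ and $\binom{2n-i-j+1}{n-i}-\binom{2n-i-j+1}{n-i-1}=\frac{i-j+2}{n-j+2}\binom{2n-i-j+1}{n-i}$, check out. The trade-off is clear: the paper's argument is shorter by citation, while yours is self-contained, makes the suffix enumeration (the least transparent step in the paper) fully explicit, and the boundary checks at $j=1$ and $j=i$ are a worthwhile added safeguard.
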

\begin{proof}
Let $\alpha = (\alpha_1, \alpha_2, \ldots, \alpha_n) \in \NDPF_n$ with $\alpha_i = j$ for fixed $j \leq i \in [n]$.
Now, consider all the different preference lists $(\alpha_1, \alpha_2, \ldots, \alpha_i)$ and $(\alpha_{i+1}, \alpha_{i+2}, \ldots, \alpha_n)$ that can be concatenated to form $\alpha$:
\begin{itemize}
    \item 
    If $\alpha \in \NDPF_n$ and $\alpha_i = j$, then $(\alpha_1, \alpha_2, \ldots, \alpha_i) \in \NDPF_i$ with maximum entry $j$.
    It is well-known that the number of weakly increasing parking functions of length $n + 1$ with maximum entry $k + 1$ is given by $C(n,k) = \frac{n-k+1}{n+1} \binom{n+k}{k}$, i.e. the $(n, k)$th entry of the Catalan triangle (OEIS \seqnum{A009766}).
    Therefore, there are
    \begin{equation*}
        C(i-1,j-1) = \frac{i - j + 1}{i}\binom{i + j - 2}{j-1}
    \end{equation*}
    possible choices for the preference list $(\alpha_1, \alpha_2, \ldots, \alpha_i)$.
    \item
    If $\alpha \in \NDPF_n$, then $\alpha$ is in bijection with a Dyck path of length $2n$.
     Furthermore, if $\alpha_i = j$ then, in particular, the preference lists $(\alpha_{i+1}, \alpha_{i+2}, \ldots, \alpha_n)$ are in bijection with North-East lattice paths from $(i+1, n-j+1)$ to $(n, n)$ that do not cross above the main diagonal. After a sequence of transformations one can obtain that these paths are in bijection with Dyck paths that start from $(0,0)$ and end at $(n-k+1,n-i)$.
    It is again well-known that these are counted by entries of the Catalan triangle \cite{DyckPathEnumeration1999}, specifically by
    \begin{equation*}
        C(n-j+1,n-i) = \frac{i - j + 2}{n-j+2}\binom{2n-i-j+1}{n-i} \eqpd
    \end{equation*}
    This gives the number of possible choices for the preference list $(\alpha_{i+1}, \alpha_{i+2}, \ldots, \alpha_n)$.
\end{itemize}
Since the preference lists are independent, the total number of choices is the product of these two counts. This completes the proof.
\end{proof}

Given Proposition~\ref{prop:NumberOfWipf}, we can now give the following probability of a uniformly random weakly increasing parking function having a particular fixed value as its last entry. 
\begin{lemma}\label{lem:distributionOfAlphaNFixed}
Let $n \in \N$, $j\geq 1$, and $X$ be a Poisson random variable with mean $\lambda = n+j-2$.
That is,
$$\Pr\left[X=k\right]=\frac{\lambda^k e^{-\lambda}}{k!} \eqpd$$ 
Let $\alpha \sim \U(\NDPF_n)$ be sampled uniformly at random from the set of all weakly increasing parking functions of length $n \in \N$. 
If $n$ is sufficiently large and $j \geq 1$ is fixed, then

\[\label{eq:distributionOfAlphaNFixed}
 \Pr_{\alpha\, \sim \,\U(\NDPF_n)}\left[\alpha_n=j \right] \sim \frac{\sqrt{\pi n} (n-j+1)e^{n+j-2}}{4^n}\Pr[X=j-1] \eqpd
\]

\end{lemma}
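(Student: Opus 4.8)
The plan is to specialize the exact count from Proposition~\ref{prop:NumberOfWipf} to the case $\alpha_i=\alpha_n$, i.e.\ set $i=n$, and then divide by the total number of weakly increasing parking functions, which is the Catalan number $C_n=\frac{1}{n+1}\binom{2n}{n}$. Applying Stirling's approximation to the resulting ratio should produce the stated asymptotic, with the Poisson factor $\Pr[X=j-1]$ emerging naturally from the binomial term. I expect the main work to be algebraic bookkeeping rather than any conceptual difficulty.

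\textbf{Setting up the exact probability.}
First I would set $i=n$ in \eqref{eq:NumberOfWIPF}. With $i=n$, the two binomial factors collapse: the factor $\binom{2n-i-j+1}{n-i}=\binom{n-j+1}{0}=1$, and the prefactor $\frac{(i-j+1)(i-j+2)}{i(n-j+2)}$ becomes $\frac{(n-j+1)(n-j+2)}{n(n-j+2)}=\frac{n-j+1}{n}$. Hence the number of $\alpha\in\NDPF_n$ with $\alpha_n=j$ is
\[
\frac{n-j+1}{n}\binom{n+j-2}{j-1}\eqpd
\]
Dividing by $C_n=\frac{1}{n+1}\binom{2n}{n}$ gives the exact expression
\[
\Pr_{\alpha\,\sim\,\U(\NDPF_n)}\left[\alpha_n=j\right]
=\frac{(n+1)(n-j+1)}{n}\,\frac{\binom{n+j-2}{j-1}}{\binom{2n}{n}}\eqpd
\]

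\textbf{Extracting the Poisson factor and applying Stirling.}
The next step is to recognize the Poisson structure. Since $X$ has mean $\lambda=n+j-2$, we have $\Pr[X=j-1]=\frac{\lambda^{j-1}e^{-\lambda}}{(j-1)!}$. To make $\binom{n+j-2}{j-1}=\frac{(n+j-2)!}{(j-1)!\,(n-1)!}$ match this, I would write $\binom{n+j-2}{j-1}=\frac{(n+j-2)!}{(n-1)!}\cdot\frac{1}{(j-1)!}$ and observe that the combination $\frac{(n+j-2)!}{(n-1)!\,\lambda^{j-1}e^{-\lambda}}$ is what converts the binomial into $\Pr[X=j-1]$ up to the remaining factorials. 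Concretely, isolating $\Pr[X=j-1]$ yields
\[
\binom{n+j-2}{j-1}
=\Pr[X=j-1]\,\frac{e^{n+j-2}}{(n+j-2)^{\,j-1}}\,\frac{(n+j-2)!}{(n-1)!}\eqpd
\]
I would then apply Stirling's approximation to $(n+j-2)!$, $(n-1)!$, and $\binom{2n}{n}\sim\frac{4^n}{\sqrt{\pi n}}$, treating $j$ as fixed so that factors like $(n+j-2)^{\,j-1}$ and $(1+\tfrac{j-2}{n})^n$ can be expanded and simplified against the $e^{n+j-2}$ and the polynomial prefactors. The $4^n$ from $\binom{2n}{n}$ lands in the denominator, and the $\sqrt{\pi n}$ surfaces in the numerator exactly as in \eqref{eq:distributionOfAlphaNFixed}.

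\textbf{The main obstacle.} The delicate point is the careful tracking of the sub-exponential polynomial and constant factors through the Stirling estimates: terms of the form $(n+j-2)^{n+j-2}$, $(n-1)^{n-1}$, and $(2n)^{2n}$ must be balanced so that all $n^n$-type growth cancels and only the clean ratio $\frac{\sqrt{\pi n}\,(n-j+1)e^{n+j-2}}{4^n}$ survives multiplying $\Pr[X=j-1]$. Since $j$ is held fixed while $n\to\infty$, I would consistently absorb all $O(1/n)$ corrections into the $\sim$ and verify that the leading constant is exactly $1$ (no stray factor of $e$ or $\sqrt{2\pi}$ remains), which is the only place an error could hide.
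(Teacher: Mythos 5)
Your proposal is correct and follows essentially the same route as the paper: specialize Proposition~\ref{prop:NumberOfWipf} at $i=n$ (so the second binomial collapses to $\binom{n-j+1}{0}=1$), divide by $C_n=\frac{1}{n+1}\binom{2n}{n}$, use $\binom{2n}{n}\sim 4^n/\sqrt{\pi n}$, and identify $\binom{n+j-2}{j-1}\sim\frac{(n+j-2)^{j-1}}{(j-1)!}=e^{n+j-2}\Pr[X=j-1]$ for fixed $j$. The only cosmetic difference is order of operations — you factor out $\Pr[X=j-1]$ exactly and then check the leftover ratio $\frac{(n+j-2)!}{(n-1)!\,(n+j-2)^{j-1}}=\prod_{k=0}^{j-2}\frac{n+k}{n+j-2}\to 1$ (which, note, needs no Stirling at all), whereas the paper approximates the binomial first and recognizes the Poisson mass afterward.
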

\begin{proof}
    Using Equation~\eqref{eq:NumberOfWIPF} in Proposition~\ref{prop:NumberOfWipf}, we have 
    \begin{align*}
        \Pr_{\alpha \,\sim\, \U(\NDPF_n)}\left[\alpha_n=j \right] 
        &=
        \frac{\frac{(n-j+1)(n-j+2)}{n(n-j+2)} \binom{n+j-2}{j-1} \binom{2n-n-j+1}{n-n}}{\frac{1}{n+1} \binom{2n}{n}}\\ 
        &\sim (n-j+1)\binom{n+j-2}{j-1}\binom{2n}{n}^{-1}\\
        &\stackrel{(*)}{\sim} \frac{\sqrt{\pi n}(n-j+1)}{4^n}\binom{n+j-2}{j-1} \\
        &\stackrel{(\dagger)}{\sim} \frac{\sqrt{\pi n}(n-j+1)}{4^n}\frac{(n+j-2)^{j-1}}{(j-1)!} \\
        &= \frac{\sqrt{\pi n}(n-j+1)e^{n+j-2}}{4^n}\Pr[X=j-1],
    \end{align*}
    where $(*)$ and $(\dagger)$ follow from Stirling's approximation of the factorial.
\end{proof}

Additionally,  we can prove Lemma \ref{lem:distributionOfAlphaNBackwards}, which we restate below.

\begin{lemma} Let $\alpha \sim \U(\NDPF_n)$ be sampled uniformly at random from the set of all weakly increasing parking functions of length $n \in \N$. For $n$ sufficiently large, and $j\geq 1$ fixed with respect to $n$,
\[
\Pr_{\alpha \sim \U(\PF_n)}\left[\alpha_n= n-j \right]\sim \frac{1}{\sqrt{2}}\frac{j-1}{
4^n}\eqpd
\]
\end{lemma}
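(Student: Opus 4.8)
The plan is to reuse the exact enumeration of Proposition~\ref{prop:NumberOfWipf} and then extract asymptotics, exactly as in the proof of Lemma~\ref{lem:distributionOfAlphaNFixed}, but in a different asymptotic regime. First I would take $i=n$ in \eqref{eq:NumberOfWIPF} and set the prescribed value of the last entry to $v=n-j$. The rightmost factor collapses, since $\binom{2n-i-v+1}{n-i}=\binom{j+1}{0}=1$, the prefactor reduces to $(j+1)/n$, and one is left with exactly $\tfrac{j+1}{n}\binom{2n-j-2}{n-j-1}$ weakly increasing parking functions with $\alpha_n=n-j$. Dividing by $\abs{\NDPF_n}=C_n=\tfrac{1}{n+1}\binom{2n}{n}$ gives
\[
\Pr_{\alpha\sim\U(\NDPF_n)}\left[\alpha_n=n-j\right]=\frac{(j+1)(n+1)}{n}\cdot\frac{\binom{2n-j-2}{n-j-1}}{\binom{2n}{n}}.
\]
This is the same rational expression that drives Lemma~\ref{lem:distributionOfAlphaNFixed}; the only difference is that the surviving binomial coefficient must now be estimated in a new regime.

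The main obstacle is precisely that estimate, and it is also where I would stress-test the stated form. Writing $\binom{2n-j-2}{n-j-1}=\binom{2n-j-2}{n-1}$ and cancelling factorials directly—rather than applying Stirling term by term, which is error-prone here—gives
\[
\frac{\binom{2n-j-2}{n-1}}{\binom{2n}{n}}=\frac{n\cdot n(n-1)\cdots(n-j)}{(2n)(2n-1)\cdots(2n-j-1)},
\]
a quotient of two products of $j+2$ factors, each of size $\Theta(n)$. The essential feature is that $\binom{2n-j-2}{n-1}$ is a \emph{near-central} binomial coefficient: its lower index lies only $j/2=O(1)$ from the center, so it is of order $4^{n}/\sqrt{\pi n}$ like $\binom{2n}{n}$, up to the constant factor $2^{-(j+2)}$. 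The exponential parts cancel, the ratio converges to $2^{-(j+2)}$, and hence $\Pr[\alpha_n=n-j]\to \frac{j+1}{2^{j+2}}$, a nondegenerate limiting distribution rather than a quantity decaying like $4^{-n}$.

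This is where I would pause before committing to the displayed asymptotic. The explicit probability above is exact and can be checked directly for small $n$ (for $n=4$ it returns $\tfrac{5}{14},\tfrac{5}{14},\tfrac{3}{14},\tfrac{1}{14}$ for $\alpha_4=4,3,2,1$), and the limits $\tfrac{j+1}{2^{j+2}}$ satisfy $\sum_{j\ge 0}\tfrac{j+1}{2^{j+2}}=1$. Since the statement as written decays like $4^{-n}$ and vanishes at $j=1$, I suspect a transcription error in the constant or the exponent, and that the intended conclusion is the limiting law $\frac{j+1}{2^{j+2}}$. If instead the $4^{-n}$ scaling is truly intended, then the genuine hard part is reconciling it with the exact expression above, which would indicate that the reduction or normalization should be revisited before the asymptotic step.
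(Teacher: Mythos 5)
Your proposal follows exactly the paper's intended route---substitute $i=n$ and prescribed value $n-j$ into \eqref{eq:NumberOfWIPF} of Proposition~\ref{prop:NumberOfWipf}, normalize by $C_n$, then extract asymptotics---but your execution is correct and the paper's is not: the lemma as stated is false, and your limiting law $\frac{j+1}{2^{j+2}}$ is the right conclusion. The paper's proof contains two separate errors. First, substituting the value $w=n-j$ into \eqref{eq:NumberOfWIPF} gives the prefactor $\frac{n-w+1}{n}=\frac{j+1}{n}$, whereas the paper starts from $\frac{(n+1)(j-1)}{n}\binom{2n-j-2}{n-j-1}\binom{2n}{n}^{-1}$; the binomial coefficient there does correspond to $w=n-j$, so the $(j-1)$ is a substitution slip and the expression is internally inconsistent. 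Second, and decisively, the Stirling manipulation is botched: in cancelling the powers of $2n$ against the powers of $n$, the factor $(2n)^{-(j+2)}\,n^{\,j+2}=2^{-(j+2)}$ is mistakenly recorded as $2^{-(2n+\frac{1}{2})}$, which is the sole source of the spurious $4^{-n}$ decay. Your telescoping identity
\[
\frac{\binom{2n-j-2}{n-1}}{\binom{2n}{n}}
=\frac{n\cdot n(n-1)\cdots(n-j)}{(2n)(2n-1)\cdots(2n-j-1)}
\longrightarrow \frac{1}{2^{\,j+2}}
\]
is a cleaner way to see that this ratio tends to a positive constant rather than decaying, and it avoids the term-by-term Stirling bookkeeping where the paper went wrong.

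Your sanity checks are exactly the ones that settle the matter. The paper's formula predicts $\Pr[\alpha_n=n-1]\to 0$ (since $j-1=0$ at $j=1$), yet the exact count $\frac{2}{n}\binom{2n-3}{n-2}$ yields a probability tending to $\frac{1}{4}$; and for fixed $j$ the quantities $\Pr[\alpha_n=n-j]$ must converge to a genuine probability distribution for $n-\alpha_n$, which your values do ($\sum_{j\geq 0}\frac{j+1}{2^{j+2}}=1$), while numbers of order $4^{-n}$ for every fixed $j$ could not. So there is no gap on your side; the gap is in the paper, whose statement should read $\Pr_{\alpha\sim\U(\NDPF_n)}\left[\alpha_n=n-j\right]\to\frac{j+1}{2^{j+2}}$, i.e., $n-\alpha_n$ converges in distribution to the law $\Pr[N=j]=\frac{j+1}{2^{j+2}}$ for $j\geq 0$.
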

\begin{proof}A similar argument as that in Lemma~\ref{lem:distributionOfAlphaNFixed} and using Proposition~\ref{prop:NumberOfWipf} gives

\begin{align*}
\begin{split}
    \frac{(n+1)(j-1)}{n}\binom{2n-j-2}{n-j-1}\binom{2n}{n}^{-1}
    &=(j-1)\frac{(2n-j-2)!n!n!}{(n-j-1)!(n-1)(2n)!}\\
&\hspace{-1.25in}\stackrel{(\ast)}{\sim}{(j-1)}\frac{(2n-j-2)^{2n-j-2+\frac{1}{2}}\sqrt{2\pi}n^{n+\frac{1}{2}} \sqrt{2\pi}n^{n+\frac{1}{2}}\sqrt{2\pi}e^{4n+j+2}}{(n-j-1)^{n-j-1+\frac{1}{2}}\sqrt{2\pi}(n-1)^{n-1} \stp(2n)^{2n+\frac{1}{2}}\stp e^{4n+j+2}}\\
&\hspace{-1.25in}\sim{(j-1)}\frac{(2n)^{2n-j-2+\frac{1}{2}}(1-\frac{j}{2n}-\frac{1}{n})^{2n-j-1+\frac{1}{2}}n^{n+\frac{1}{2}} n^{n+\frac{1}{2}}}{n^{n-j-1+\frac{1}{2}}(1-\frac{j}{n}-\frac{1}{n})^{n-j-1+\frac{1}{2}}n^{n-1+\frac{1}{2}}(1-\frac{1}{n})^{n-1+\frac{1}{2}} (2n)^{2n+\frac{1}{2}} }\\
&\hspace{-1.25in}\sim{(j-1)}\frac{e^{-j-2}}{e^{-j-2}2^{2n+\frac{1}{2}}}\\
&\hspace{-1.25in}\sim\frac{j-1}{4^n\sqrt{2}}\\
\end{split}
\end{align*}
where $(*)$ follows from Stirling's approximation of the factorial.
\end{proof}

The following result holds in the case that $j$ goes to infinity with~$n$.

\begin{lemma}\label{lem:distributionOfAlphaNGrowing}
Let $X$ be a Poisson distributed random variable with mean $\lambda = n+j-2$.  That~is 
$$\Pr[X=k]=\frac{\lambda^k e^{-\lambda}}{k!} .$$ 
Let $\alpha \sim \U(\NDPF_n)$ be sampled uniformly at random from the set of  all weakly increasing parking functions of length $n \in \N$. For $n$ and $j$ both going to infinity, we have 
\[
\Pr_{\alpha \,\sim\, \U(\PF_n)}\left[\alpha_n=j \right] \sim \frac{(n+1)e^{n+j-1}(n-j+1)\sqrt{2\pi n}}{(n+j-2)4^{n}}\Pr[X=j-1]\eqpd
\] 
\end{lemma}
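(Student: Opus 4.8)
The plan is to follow the same template as the proof of Lemma~\ref{lem:distributionOfAlphaNFixed}, starting from the exact count and then replacing the bounded-$j$ shortcut by a full Stirling analysis that survives as $j\to\infty$. First I would specialize Proposition~\ref{prop:NumberOfWipf} to $i=n$. Since $\binom{2n-n-j+1}{n-n}=\binom{n-j+1}{0}=1$ and $\tfrac{(n-j+1)(n-j+2)}{n(n-j+2)}=\tfrac{n-j+1}{n}$, the number of $\alpha\in\NDPF_n$ with $\alpha_n=j$ equals $\tfrac{n-j+1}{n}\binom{n+j-2}{j-1}$. Dividing by $|\NDPF_n|=\tfrac{1}{n+1}\binom{2n}{n}$ yields the exact identity
\[
\Pr_{\alpha\,\sim\,\U(\NDPF_n)}\left[\alpha_n=j\right]=\frac{(n+1)(n-j+1)}{n}\binom{n+j-2}{j-1}\binom{2n}{n}^{-1}.
\]
Applying Stirling to the central binomial coefficient gives $\binom{2n}{n}^{-1}\sim\tfrac{\sqrt{\pi n}}{4^n}$, exactly as in the step marked $(*)$ of Lemma~\ref{lem:distributionOfAlphaNFixed}.

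The crux is the asymptotics of $\binom{n+j-2}{j-1}$. In the fixed-$j$ lemma one invokes $\binom{n+j-2}{j-1}\sim\tfrac{(n+j-2)^{j-1}}{(j-1)!}$, but this approximation is only licensed when $j$ stays bounded, so it must be discarded here. Instead I would apply Stirling to $(n+j-2)!$ and to $(n-1)!$ in $\binom{n+j-2}{j-1}=\tfrac{(n+j-2)!}{(j-1)!\,(n-1)!}$, deliberately leaving the factor $(j-1)!$ untouched so that it can later be paired with the $(j-1)!$ in the Poisson mass $\Pr[X=j-1]=\tfrac{(n+j-2)^{j-1}e^{-(n+j-2)}}{(j-1)!}$ of mean $\lambda=n+j-2$. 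Writing $n+j-2=(n-1)+(j-1)$ and collecting the power terms, this produces a representation of the form
\[
\binom{n+j-2}{j-1}\sim\frac{(n+j-2)^{j-1}}{(j-1)!}\left(\frac{n+j-2}{n-1}\right)^{n-\frac{1}{2}}e^{-(j-1)},
\]
after which I would substitute $\tfrac{(n+j-2)^{j-1}}{(j-1)!}=e^{\,n+j-2}\Pr[X=j-1]$ to expose the Poisson factor.

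Finally I would assemble the three ingredients, namely the exact prefactor $\tfrac{(n+1)(n-j+1)}{n}$, the central-binomial asymptotic $\tfrac{\sqrt{\pi n}}{4^n}$, and the expression for $\binom{n+j-2}{j-1}$ above. Merging the powers of $e$ (the $e^{-(j-1)}$ from the binomial against the $e^{\,n+j-2}$ from the Poisson rewrite) and simplifying the root and polynomial factors should deliver the stated asymptotic, with its $\sqrt{2\pi n}$, its exponential of order $n+j$, and its $\tfrac{1}{n+j-2}$. The main obstacle is precisely the factor $\left(\tfrac{n+j-2}{n-1}\right)^{n-\frac{1}{2}}=\left(1+\tfrac{j-1}{n-1}\right)^{n-\frac{1}{2}}$: its logarithm expands as $(j-1)+\tfrac{j-1}{2(n-1)}-\tfrac{(j-1)^2}{2(n-1)}+\cdots$, so beyond the dominant $e^{\,j-1}$ the surviving subexponential correction is governed by the ratio $\tfrac{(j-1)^2}{n}$. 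Deciding which of these terms contribute to the final constants is the delicate point, and it forces one to fix the regime in which $n$ and $j$ tend to infinity (for instance $j=o(\sqrt{n})$ makes the correction tend to $1$). This is the single place where the growing-$j$ computation genuinely departs from the bounded-$j$ one, and controlling it is where essentially all of the work — and all of the constants — reside.
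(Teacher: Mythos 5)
Your proposal follows essentially the same route as the paper's own proof: start from the exact count of Proposition~\ref{prop:NumberOfWipf} specialized to $i=n$, apply Stirling to every factorial except $(j-1)!$, and convert $\tfrac{(n+j-2)^{j-1}}{(j-1)!}$ into $e^{n+j-2}\Pr[X=j-1]$ at the end. Your expansion $\binom{n+j-2}{j-1}\sim\tfrac{(n+j-2)^{j-1}}{(j-1)!}\bigl(\tfrac{n+j-2}{n-1}\bigr)^{n-\frac12}e^{-(j-1)}$ is correct, and your identification of the delicate factor is exactly right: the paper performs the same step, replacing $\bigl(1+\tfrac{j-2}{n}\bigr)^{n+\frac12}$ by $e^{j-2}$ with no comment, and this is valid only when $(j-1)^2/n\to 0$. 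So the hypothesis ``$n$ and $j$ both going to infinity'' indeed needs a restriction such as $j=o(\sqrt n)$, and you are more careful than the paper on this point. (One small correction: in that regime the naive approximation $\binom{n+j-2}{j-1}\sim\tfrac{(n+j-2)^{j-1}}{(j-1)!}$ need not be ``discarded''---your correction factor tends to $1$ precisely when the whole argument works---but keeping it explicit, as you do, is the rigorous way to see this.)

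The genuine gap is that you never execute the final assembly, and executing it does not ``deliver the stated asymptotic.'' Multiplying your three ingredients in the regime $j=o(\sqrt n)$ yields
\[
\Pr_{\alpha\,\sim\,\U(\NDPF_n)}\left[\alpha_n=j\right]\sim\frac{(n+1)(n-j+1)\sqrt{\pi n}\;e^{n+j-2}}{n\,4^{n}}\Pr[X=j-1]\eqcom
\]
which is consistent with Lemma~\ref{lem:distributionOfAlphaNFixed} (as it must be, since both lemmas describe the same probability), but differs from the right-hand side printed in the statement by a factor tending to $\sqrt2\,e$. This mismatch is not an error on your side; it reflects slips in the paper's own derivation: there, $(2n)^{2n+\frac12}$ is treated as contributing $4^{n+\frac12}n^{2n+\frac12}$ when it actually contributes $\sqrt2\cdot 4^{n}n^{2n+\frac12}$; the conversion of $(n+j-2)^{j-2}/(j-1)!$ into the Poisson mass should produce $e^{n+j-2}$ rather than the printed $e^{n+j-1}$; and the proof's final line carries $4^{n+\frac12}$ while the statement has $4^{n}$. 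If you complete your computation honestly you will land on the display above, not on the paper's formula, so the right conclusion of your argument is the corrected asymptotic together with an explicit regime assumption, and you should flag the discrepancy rather than adjust constants to force agreement.
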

\begin{proof}
    Using Equation~\eqref{eq:NumberOfWIPF} in Proposition~\ref{prop:NumberOfWipf}, we have 
    \begin{align*}
    \frac{(n+1)(n-j+1)}{n}\binom{n+j-2}{j-1}\binom{2n}{n}^{-1}
    &= {(n+1)(n-j+1)}\frac{(n+j-2)!n!}{(j-1)!(2n)!}\\
&\hspace{-1.25in}\stackrel{(\ast)}{\sim}{(n-j+1)(n+1)}\frac{(n+j-2)^{n+j-2+\frac{1}{2}}\sqrt{2\pi}n^{n}\sqrt{2\pi n}e^{2n}}{\sqrt{2\pi}(j-1)!(2n)^{2n+\frac{1}{2}}e^{2n+j-2}}\\
 &\hspace{-1.25in}\sim\frac{(n+1)(n-j+1)n^{n+\frac{1}{2}}(1+\frac{j-2}{n})^{n+\frac{1}{2}}(n+j-2)^{j-2}n^{n}\sqrt{2\pi n}e^{2n}}{(j-1)!4^{n+\frac{1}{2}}(n)^{2n+\frac{1}{2}}e^{2n+j-2}}\\
&\hspace{-1.25in}\sim\frac{(n+1)(n-j+1)e^{j-2}(n+j-2)^{j-2}\sqrt{2\pi n}}{(j-1)!4^{n+\frac{1}{2}}e^{j-2}}\\
&\hspace{-1.25in}=\frac{(n+1)e^{n+j-1}(n-j+1)\sqrt{2\pi n}}{(n+j-2)4^{n+\frac{1}{2}}}\Pr[X=j-1]
    \end{align*}
where, once more, $(*)$ follows from Stirling's approximation of the factorial.
\end{proof}
Next we give the expected value of the last entry of a uniformly random weakly increasing parking function.

\begin{lemma}\label{lem:expectationOfAlphaSubNInWIPF}
If $\alpha \sim \mathcal{U}\left(\NDPF_n\right)$ is a uniformly random weakly increasing parking function of length $n$, then the expected value for the last entry is given by
$$
\E\left[\,\alpha_n\right] = \frac{n(n-1)}{n+2}.
$$
\end{lemma}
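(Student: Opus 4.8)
The plan is to compute $\E[\alpha_n]$ directly from the exact enumeration in Proposition~\ref{prop:NumberOfWipf}. Specializing \eqref{eq:NumberOfWIPF} to $i=n$, the second binomial factor collapses since $\binom{2n-i-j+1}{n-i}=\binom{n-j+1}{0}=1$, so the number of $\alpha\in\NDPF_n$ with $\alpha_n=j$ equals $N(n,j)=\frac{n-j+1}{n}\binom{n+j-2}{j-1}$ for $1\le j\le n$ (these are exactly the Catalan/ballot-triangle entries $C(n-1,j-1)$ appearing in the proof of that proposition). Dividing by $|\NDPF_n|=C_n=\frac{1}{n+1}\binom{2n}{n}$ and weighting by the value $j$, I would write
\[
\E[\alpha_n]=\frac{n+1}{n\binom{2n}{n}}\sum_{j=1}^{n} j\,(n-j+1)\binom{n+j-2}{j-1}.
\]
The whole problem thus reduces to evaluating the single sum $S=\sum_{j=1}^{n} j(n-j+1)\binom{n+j-2}{j-1}$ in closed form.

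To evaluate $S$, I would reindex with $k=j-1$ and expand the polynomial weight as $(k+1)(n-k)=n+(n-1)k-k^2$, splitting $S$ into the three sums $\sum_k \binom{n+k-1}{k}$, $\sum_k k\binom{n+k-1}{k}$, and $\sum_k k^2\binom{n+k-1}{k}$ over $0\le k\le n-1$. Each is handled with two standard tools: the absorption identity $k\binom{n+k-1}{k}=n\binom{n+k-1}{k-1}$ (applied twice, via $k^2=k(k-1)+k$, for the quadratic term), followed by the hockey-stick identity $\sum_{k=0}^{m}\binom{r+k}{k}=\binom{r+m+1}{m}$. This collapses each sum to a single binomial coefficient, so that $S$ becomes a short integer combination of $\binom{2n-1}{n-1}$, $\binom{2n-1}{n-2}$, and $\binom{2n-1}{n-3}$.

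Finally, I would rewrite all three coefficients in terms of $\binom{2n-1}{n-1}$ using the elementary ratios $\binom{2n-1}{n-2}/\binom{2n-1}{n-1}=\frac{n-1}{n+1}$ and $\binom{2n-1}{n-3}/\binom{2n-1}{n-1}=\frac{(n-1)(n-2)}{(n+1)(n+2)}$, and then cancel the surviving binomial against $\binom{2n}{n}$ using $\binom{2n-1}{n-1}/\binom{2n}{n}=\tfrac12$. Collecting the remaining rational factors in $n$ then produces the closed form for $\E[\alpha_n]$.

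The main obstacle is the bookkeeping in the middle step: the index shifts when iterating the absorption identity on the $k^2$ term, together with the corresponding shifted parameters in the hockey-stick identity, are easy to misalign, and any slip there propagates into the final rational simplification. To guard against this I would cross-check the closed form obtained for $S$ against small cases (e.g.\ $n=2,3$, where $S=6$ and $S=33$) before simplifying. As an independent derivation one may instead note that the $N(n,j)$ are entries of the Catalan (ballot) triangle and apply known formulas for its weighted row sums, which sidesteps the manual index juggling entirely.
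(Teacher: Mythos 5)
Your setup is correct, and it is essentially the same route as the paper's own proof: specialize Proposition~\ref{prop:NumberOfWipf} to $i=n$, reduce $\E[\alpha_n]$ to weighted binomial sums, and evaluate those by absorption plus hockey-stick (the paper outsources exactly these evaluations to Propositions~\ref{eq:BinomialIdentity1} and~\ref{eq:BinomialIdentity2} in its appendix). But there is a genuine problem: if you carry your plan out faithfully, you will \emph{not} arrive at the stated formula. With $k=j-1$ and the three sums evaluated as you describe, one gets
\[
S=\sum_{j=1}^{n} j(n-j+1)\binom{n+j-2}{j-1}
=\binom{2n-1}{n-1}\,\frac{2n(n^2+2)}{(n+1)(n+2)},
\]
so that
\[
\E[\alpha_n]=\frac{n+1}{n\binom{2n}{n}}\,S=\frac{n^2+2}{n+2},
\]
which differs from the lemma's claim $\frac{n(n-1)}{n+2}$ by exactly $1$. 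Your own cross-checks already expose this: for $n=2$ you correctly get $S=6$, hence $\E[\alpha_2]=\tfrac{3}{2\cdot 6}\cdot 6=\tfrac{3}{2}$, while the lemma claims $\tfrac12$; for $n=3$, $S=33$ gives $\E[\alpha_3]=\tfrac{11}{5}$ versus the claimed $\tfrac65$. Direct enumeration of $\NDPF_2=\{(1,1),(1,2)\}$ and of the five elements of $\NDPF_3$ confirms $\tfrac32$ and $\tfrac{11}{5}$; note also that any value below $1$ is impossible, since $\alpha_n\geq 1$ always.

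The source of the discrepancy is not your computation but the paper's: in the paper's displayed sum $\sum_{j=0}^{n-1}\frac{j(n-j)}{n}\binom{n+j-1}{j}$, the index has been shifted so that the value of $\alpha_n$ is $j+1$ (that is why the binomial reads $\binom{n+j-1}{j}$ and the multiplicity factor reads $n-j$), yet the expectation weight used is $j$ rather than $j+1$; the paper therefore computes $\E[\alpha_n-1]=\E[\alpha_n]-1$, and the stated lemma inherits this off-by-one. So your proposal, executed as written, is a correct derivation of $\E[\alpha_n]=\frac{n^2+2}{n+2}=\frac{n(n-1)}{n+2}+1$, and it cannot terminate at the lemma as stated. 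The actionable fix is to complete your final collection step honestly, state the corrected closed form, and flag that the lemma (and its restatement in the introduction) should read $\E[\alpha_n]=\frac{n^2+2}{n+2}$.
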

\begin{proof}
Again using Proposition \ref{prop:NumberOfWipf}, we get that the expected value of the last entry of a weakly increasing parking function of length $n$ is given by 
\[   
    \begin{split}
    \E\left[\,\alpha_n\right]&=
    {(n+1)}\binom{2n}{n}^{-1}\left[\sum\limits_{j=0}^{n-1}\frac{j(n-j)}{n}\binom{n+j-1}{j}\right]\\
    &={(n+1)}\binom{2n}{n}^{-1}\left[\sum\limits_{j=0}^{n-1}j\binom{n+j-1}{j}-\frac{1}{n}\sum\limits_{j=0}^{n-1}j^2\binom{n+j-1}{j}\right].
    \end{split}
    \]

Using the identities
\begin{align*}
\displaystyle\sum_{j=0}^{n-1} j\binom{j+n-1}{j}&=\frac{(n-1) n\binom{2 n-1}{n-1}}{n+1} 
\intertext{and}
\sum_{j=0}^{n - 1} j^2 \binom{j+n-1}{j} &= \frac{(n-1)n^3}{(n+1)(n+2)} \binom{2n-1}{n-1}, 
\end{align*}
whose proofs we defer to the appendix (in particular, refer to Proposition~\ref{eq:BinomialIdentity1}), we have 
  \begin{align*}
   \E\left[\,\alpha_n\right]&=   {(n+1)}\binom{2n}{n}^{-1}\left[\frac{n(n-1)}{(n+1)}\binom{2n-1}{n-1}-\frac{1}{n}\sum\limits_{j=0}^{n-1}j\binom{n+j-1}{j}\right]\\
    &=   {(n+1)}\binom{2n}{n}^{-1}\left[\frac{n(n-1)}{(n+1)}\binom{2n-1}{n-1} - \frac{(n-1)n^2}{(n+1)(n+2)} \binom{2n-1}{n-1}\right]\\
    &= (n(n-1))\binom{2n-1}{n-1}\binom{2n}{n}^{-1}\left[1 -  \frac{n}{(n+2)}\right]\\
    &=\frac{n(n-1)}{n+2}.
\qedhere    \end{align*} 
\end{proof}
In addition, due to the constraint that $\alpha_i\leq i$ for all $i \in n$, we have the following remark.
\begin{proposition}
Fix $m\in [n]$ and indices $(i_1,i_2,\ldots,i_k)\in[n]^k$ satisfying $i_1 \leq \dots\leq  i_k$ and $i_1\leq m \leq n$.
Then, for $\alpha\in \NDPF_n$, we have 
    \[\Pr\left[\alpha\,|\, \alpha_{i_1} \leq m, \alpha_{i_2} \leq m, \ldots, \alpha_{i_k} \leq m\right] =  \Pr[\alpha\,|\, \alpha_{i_k} \leq m].\] 
\end{proposition}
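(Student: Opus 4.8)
The plan is to exploit the defining monotonicity of weakly increasing parking functions, which forces the conditioning event on the left-hand side to collapse to the single event $\{\alpha_{i_k}\leq m\}$ appearing on the right.

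First I would recall that, by Definition~\ref{def:WeaklyIncreasingParkingFunction}, every $\alpha \in \NDPF_n$ satisfies $\alpha_1 \leq \alpha_2 \leq \cdots \leq \alpha_n$. Combined with the hypothesis $i_1 \leq i_2 \leq \cdots \leq i_k$ on the indices, this immediately yields the chain of inequalities $\alpha_{i_1} \leq \alpha_{i_2} \leq \cdots \leq \alpha_{i_k}$ for every such $\alpha$. The key step is then to observe that this nesting of values forces a nesting of events: whenever $\alpha_{i_k} \leq m$ holds, we automatically have $\alpha_{i_j} \leq \alpha_{i_k} \leq m$ for every $j \in [k]$. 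Consequently the intersection collapses,
\[
\{\alpha_{i_1} \leq m\} \cap \{\alpha_{i_2} \leq m\} \cap \cdots \cap \{\alpha_{i_k} \leq m\} = \{\alpha_{i_k} \leq m\},
\]
since the $i_k$-constraint is the most restrictive one and implies all the others. In fact, under the standing hypothesis $i_1 \leq m$ together with the parking-function bound $\alpha_{i_1} \leq i_1$ (guaranteed for $\alpha\in\NDPF_n$ and, more generally, by Lemma~\ref{lem:PFWhoseOutcomeIsIdentity}), the constraints with smaller indices are already vacuous, so only the $i_k$-constraint can ever be binding.

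Finally, substituting this set identity into the definition of conditional probability gives the claim directly: conditioning on the intersection is literally the same as conditioning on $\{\alpha_{i_k}\leq m\}$, so the two conditional distributions of $\alpha$ coincide. The only point requiring care is that the conditioning event have positive probability, so that the conditional distribution is well-defined; this holds because $\{\alpha_{i_k}\leq m\}$ contains, for instance, the all-ones preference list whenever $m \geq 1$, and the hypothesis $m \geq i_1 \geq 1$ ensures $m \geq 1$. I do not anticipate any genuine obstacle here: the entire content of the statement is the monotonicity observation, and no computation is involved, so this is best presented as a short consequence of the weakly increasing structure rather than as a result requiring machinery.
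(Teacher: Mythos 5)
Your proposal is correct and takes essentially the same route as the paper's one-line proof: the weak monotonicity $\alpha_1\leq\cdots\leq\alpha_n$ together with $i_1\leq\cdots\leq i_k$ gives $\alpha_{i_j}\leq\alpha_{i_k}$, so the intersection of conditioning events collapses to the single event $\{\alpha_{i_k}\leq m\}$. (One small caveat: your aside that the smaller-index constraints are ``vacuous'' overreaches, since the hypothesis $i_1\leq m$ only makes the $i_1$-constraint automatic via $\alpha_{i_1}\leq i_1\leq m$, while intermediate indices $i_j$ may exceed $m$; but this remark is not needed for your argument, which stands without it.)
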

\begin{proof}
    The result follows since $\alpha$ is a weakly increasing parking function, hence $\alpha_{i_j} \leq \alpha_{i_k}$ for $i_j \leq i_k$.
\end{proof}

Furthermore, we remark that the probabilities given by Lemmas \ref{lem:distributionOfAlphaNFixed} and \ref{lem:distributionOfAlphaNGrowing} are monotonically decreasing in $j$. This can be seen through Lemma~\ref{lem:monotonicityofWIPF} whose proof can be found in the appendix.

\subsection{Lucky Cars in Weakly Increasing Parking Functions}
For completeness, in this subsection we bring together some recent enumerative results on  parking function stating them in the language of probability. Harris and Martinez \cite{harris2024parkingfunctionsfixedset} previously considered the following enumerative question: 
Given a 
subset $L\subseteq [n]$ containing $1$, how many parking functions in $\PF_n$ have their set of lucky cars being those indexed by the set $L$? Their results involve enumerating these parking functions based on the outcome permutation, namely the permutation which details which car parked in what spot. 
They then restrict to the set of weakly increasing parking functions and ask the analogous question. Their result in this case involves a product of Catalan numbers, see \cite[Theorem 2.8]{harris2024parkingfunctionsfixedset}. 

In what follows, we independently recover their result and use this to give expected values for the number of weakly increasing parking functions with a fixed set of lucky cars, and for a fixed number of lucky cars. 
To begin, we use the bijection between parking functions and Dyck paths to find the expected number of lucky cars in a uniformly random weakly increasing parking function.
\begin{remark}\label{remark1}
     We note that the number of returns in a Dyck path is given by the number of returns, which is the number of times the Dyck path touches the $x$-axis, excluding the endpoint $(0,2n)$, provided our Dyck path is non-empty. 
     Thus, the number of lucky cars in $\alpha\in\NDPF_n$ is equal to the number of returns in the Dyck path corresponding to $\alpha$.
\end{remark}
From Remark~\ref{remark1} we immediately get the following result.
\begin{corollary}
    The expected number of lucky cars in a weakly increasing parking function with $n$ cars is $\frac{3n}{n+2}$.
\end{corollary}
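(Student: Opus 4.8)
The plan is to translate the statement into a question about uniformly random Dyck paths and then compute the expectation by linearity. By Remark~\ref{remark1}, the number of lucky cars in $\alpha \in \NDPF_n$ equals the number of returns of the Dyck path associated to $\alpha$ under the bijection illustrated in Figure~\ref{fig: dyck path}. Since that map is a bijection between $\NDPF_n$ and Dyck paths of length $2n$, sampling $\alpha \sim \U(\NDPF_n)$ is the same as sampling a uniformly random Dyck path $P$ of semilength $n$. Thus it suffices to compute $\E[R(P)]$, where $R(P)$ denotes the number of returns of $P$.

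First I would write $R(P)$ as a sum of indicator random variables. Every touch of the $x$-axis occurs after an even number of steps, so
\[
    R(P) = \sum_{i=1}^{n} \mathbf{1}\!\left[P \text{ is at height } 0 \text{ after } 2i \text{ steps}\right],
\]
where the term $i = n$ records the terminal touch. By linearity of expectation, $\E[R(P)] = \sum_{i=1}^{n} \Pr[P \text{ is at height } 0 \text{ after } 2i]$. Next I would evaluate each probability by a decomposition argument: a Dyck path of semilength $n$ that returns to the axis after exactly $2i$ steps factors uniquely as a Dyck path of semilength $i$ concatenated with a Dyck path of semilength $n-i$, so the number of such paths is $C_i C_{n-i}$ and $\Pr[P \text{ at } 0 \text{ after } 2i] = C_i C_{n-i}/C_n$.

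Substituting and invoking the Catalan convolution identity $\sum_{i=0}^{n} C_i C_{n-i} = C_{n+1}$ together with $C_0 = 1$ yields
\[
    \E[R(P)] = \frac{1}{C_n}\sum_{i=1}^{n} C_i C_{n-i} = \frac{C_{n+1} - C_n}{C_n} = \frac{C_{n+1}}{C_n} - 1 .
\]
Finally, the standard ratio $C_{n+1}/C_n = \tfrac{2(2n+1)}{n+2}$ gives $\E[R(P)] = \tfrac{2(2n+1)}{n+2} - 1 = \tfrac{3n}{n+2}$, as claimed.

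The only genuine obstacle is boundary bookkeeping: one must ensure the indicator sum captures exactly the quantity Remark~\ref{remark1} calls the number of returns, in particular whether the terminal touch at step $2n$ is counted, since this is what decides between $C_{n+1}/C_n - 1$ and a shifted expression. A quick check on small cases settles the convention ($n=1$ gives $1$, and $n=2$ gives $\tfrac{3}{2}$, matching the two paths corresponding to $(1,1)$ and $(1,2)$). Everything else is routine Catalan-number manipulation; alternatively one could bypass linearity by summing $k$ against the known count $\tfrac{k}{2n-k}\binom{2n-k}{n}$ of Dyck paths of semilength $n$ with exactly $k$ returns, but the convolution route above is shorter.
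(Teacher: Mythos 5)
Your proof is correct, but it takes a genuinely different route from the paper. The paper's proof is a one-line citation: it invokes Deutsch~\cite{DyckPathEnumeration1999}, who showed the expected number of returns of a uniformly random Dyck path of semilength $n$ is $\frac{3n}{n+2}$, and combines this with Remark~\ref{remark1}. You instead give a self-contained derivation: linearity of expectation over the indicators of being at height $0$ at times $2,4,\dots,2n$, the factorization of such paths into a semilength-$i$ and a semilength-$(n-i)$ Dyck path giving $C_i C_{n-i}$, the Segner convolution $\sum_{i=0}^{n} C_i C_{n-i}=C_{n+1}$, and the ratio $C_{n+1}/C_n=\frac{2(2n+1)}{n+2}$; each of these steps checks out. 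What your approach buys is independence from the external reference and from the refined count $\frac{k}{2n-k}\binom{2n-k}{n}$ of paths with exactly $k$ returns (which the paper only needs later, for the distribution of $|L_\alpha|$); what the paper's approach buys is brevity. One small point on your boundary bookkeeping: Remark~\ref{remark1} counts touches of the $x$-axis \emph{excluding the endpoint} (so including the start), whereas your indicator sum counts touches at positive times including the endpoint; these conventions agree, since both equal the total number of touches minus one, and your $n=1,2$ sanity checks confirm the match.
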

\begin{proof}
Deutsch~\cite{DyckPathEnumeration1999} showed that expected number of returns for a Dyck path of semilength $n$ (length $2n$) is $\frac{3n}{n+2}$. Thus, the expected number of lucky cars in a weakly increasing parking function is precisely $\frac{3n}{n+2}$.
\end{proof}
Next, we consider the following question: Given a set of lucky cars $L$, what is the probability a uniformly selected weakly increasing parking function $\alpha\in\mathcal{U}(\NDPF_n)$ has $L$ as its set of lucky cars.
We note that a car which is lucky with respect to the classical parking protocol of Konheim and Weiss is also lucky with respect to our probabilistic parking protocol, and thus, the following result is independent of the chosen parking protocol.

\begin{lemma}\label{lem:weaklyincreasingwithfixedsetofluckycars}
    Let $L_\alpha \subseteq [n]$ denote the set of lucky cars at the end of a parking process run with preference list $\alpha \in \NDPF_n$. Furthermore, let $L \coloneqq \{l_1, \dots, l_k\} \subseteq [n]$  such that  $l_1 \leq l_2 \dots \leq l_k$. Thus, we have that 
    \[\Pr_{\alpha \sim \mathcal{U}(\NDPF_n)}[L_\alpha= L] =\begin{cases} 0 & \text{if } 1\not\in L \\
    \displaystyle\frac{\prod_{i=1}^{k} C_{x_j}}{C_n}
    & \text{if } 1\in L 
    \end{cases}\]
    where $C_{x_j}$ is the $x_j$th Catalan number and $x_j \coloneqq l_{j+1}- l_j - 1$ for $1\leq j <k $, and $x_k \coloneqq n - l_k$.
\end{lemma}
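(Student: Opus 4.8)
The plan is to prove this via the Dyck path bijection together with Remark~\ref{remark1}, which identifies lucky cars with returns to the $x$-axis. First I would dispose of the trivial case: since $\alpha \in \NDPF_n$ always forces $\alpha_1 = 1$ (so car $1$ is always lucky), if $1 \notin L$ then no weakly increasing parking function can have $L_\alpha = L$, giving probability $0$. For the main case $1 \in L$, the strategy is to show that the weakly increasing parking functions with lucky set exactly $L$ are in bijection with a product of smaller Catalan objects, and then divide by $C_n = |\NDPF_n|$ to obtain the probability.

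The key structural observation is that the returns of the Dyck path decompose it into \emph{prime} (irreducible) blocks. If $L = \{l_1 < l_2 < \cdots < l_k\}$ is the set of lucky cars, then these are exactly the indices $i$ where the Dyck path makes an up-step starting from the $x$-axis, i.e. where a new prime block begins. Thus I would translate the condition ``$L_\alpha = L$'' into the statement that the Dyck path touches down at precisely the steps determined by $L$, so that the block beginning at the $j$th lucky car $l_j$ has semilength equal to the gap to the next lucky car, namely $l_{j+1} - l_j$ for $1 \le j < k$, and $n - l_k + 1$ for the final block. The interior of each such block (an up-step, a Dyck path that never returns to the axis, then a down-step) is itself freely an arbitrary Dyck path of semilength one less than the block length; this is the standard first-return decomposition. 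Hence a block of semilength $s$ contributes $C_{s-1}$ choices. With block semilengths $l_{j+1}-l_j$ for $j<k$ and $n-l_k+1$ for $j=k$, the number of admissible Dyck paths is $\prod_{j=1}^{k-1} C_{(l_{j+1}-l_j)-1} \cdot C_{(n-l_k+1)-1} = \prod_{j=1}^{k} C_{x_j}$ with $x_j = l_{j+1}-l_j-1$ for $j<k$ and $x_k = n - l_k$, matching the stated indices.

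Dividing this count by the total number $|\NDPF_n| = C_n$ yields the claimed formula. I would close by noting that, because lucky cars coincide for the classical and probabilistic protocols (both declare car $i$ lucky exactly when it parks immediately in $\alpha_i$, a deterministic event depending only on $\alpha$), the statement is genuinely protocol-independent.

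The main obstacle I anticipate is making the first-return decomposition airtight, specifically verifying that the returns of the Dyck path land at exactly the up-steps indexed by $L$ and translating this cleanly into the block semilengths. One must be careful with the off-by-one bookkeeping: a return to the axis after the $i$th down-step corresponds, under the bijection described in the excerpt (where $\alpha_i$ is one more than the number of down-steps before the $i$th up-step), to the next lucky car, and one must confirm that a block spanning cars $l_j$ through $l_{j+1}-1$ has the stated semilength so that its interior contributes $C_{x_j}$. Verifying that the map from such constrained Dyck paths to tuples of arbitrary smaller Dyck paths is a genuine bijection (both well-defined and invertible, with independent choices across blocks) is where the real content lies; everything else is routine.
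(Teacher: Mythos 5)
Your proposal is correct and takes essentially the same route as the paper: the paper also dispatches $1\notin L$ by noting car $1$ is always lucky, and for $1\in L$ it invokes the prior count of Harris--Martinez but then gives exactly your argument, decomposing the Dyck path at its returns into blocks, each counted by a smaller Catalan number. Your bookkeeping is in fact slightly more careful than the paper's write-up, which asserts that the number of Dyck paths of semilength $l_{j+1}-l_j$ with exactly one return is $C_{l_{j+1}-l_j}$ --- an off-by-one slip, since as you correctly note a block of semilength $s$ contributes $C_{s-1}$ choices via the first-return decomposition, which is what makes the indices $x_j = l_{j+1}-l_j-1$ and $x_k = n-l_k$ in the lemma come out right.
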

\begin{proof}
The case of $1\not\in L$ is clear, as mentioned previously, assuming we run the parking protocol with a weakly increasing parking function, the first car is always lucky. When $1\in L$, the result follows from \cite[Theorem 2.8]{harris2024parkingfunctionsfixedset} which states that the number of weakly increasing parking functions with lucky cars in the set $L$ is exactly \[\displaystyle \prod_{j=1}^{k} C_{x_j}\]
    where $C_{x_j}$ is the $x_j$th Catalan number and $x_j\coloneqq l_{j+1}- l_j - 1$ and $x_k \coloneqq n - l_k$ when $1\in L$. This result also follows from the bijection to Dyck paths; the number of parking functions with lucky cars given by the set $L$ are in bijection with the set of Dyck paths with returns with the set of coordinates $(2l-2,0)$ for each $l\in L\setminus \{1\}$. We can count the number of such Dyck paths by thinking of each Dyck paths as a composition of smaller Dyck paths with exactly 1 return and length equal to the difference in position of the returns in our original Dyck path (see Figure~\ref{fig:DyckPathDecomp}). Our result follows from the fact that the number of Dyck paths with semilength $l_{j+1} -l_{j}$ and exactly 1 return is precisely $C_{l_{j+1} - l_{j}}$.
    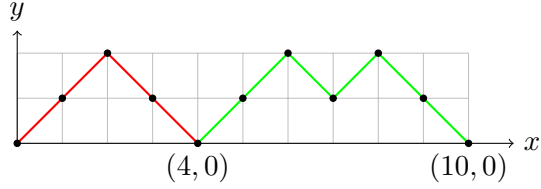
\begin{figure}

\begin{tikzpicture}[scale=0.6]
    \draw[lightgray, very thin] (0,0) grid (10,2);

    \draw[->] (0,0) -- (11,0) node[right] {$x$};
    \draw[->] (0,0) -- (0,2.5) node[above] {$y$};

    \draw[red, thick] 
        (0,0) -- (1,1) -- (2,2) -- (3,1) -- (4,0);
    
    \draw[green, thick]
        (4,0) -- (5,1) -- (6,2) -- (7,1) -- (8,2) -- (9,1) -- (10,0);

    \foreach \x/\y in {0/0, 1/1, 2/2, 3/1, 4/0, 5/1, 6/2, 7/1, 8/2, 9/1, 10/0} {
        \filldraw (\x,\y) circle (2pt);
    }

    \node[below] at (4,0) {$(4,0)$};
    \node[below] at (10,0) {$(10,0)$};
\end{tikzpicture}
\caption{The Dyck path corresponding to $(1,1,3,3,4)\in\NDPF_5$ can be thought of as a composition of the two different Dyck paths, each with no returns except the last.}
\label{fig:DyckPathDecomp}
\end{figure}
\end{proof}

Using Lemma \ref{lem:weaklyincreasingwithfixedsetofluckycars}, we can prove the following.
\begin{lemma}Let $|L_\alpha|$ denote the number of lucky cars at the end of a parking process run with preference list $\alpha \in \NDPF_n$, and suppose $1\leq k\leq n$. Then, we have 
    \[\Pr_{\alpha \,\sim\, \mathcal{U}(\NDPF_n)}\left[|L_\alpha|= k\right]=\frac{\binom{2n-k-1}{n-k}\frac{n}{k} }{C_n}\eqpd\]
\end{lemma}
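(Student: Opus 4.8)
The plan is to reduce the statement to a purely combinatorial count of Dyck paths and then evaluate a $k$-fold self-convolution of Catalan numbers. First I would invoke Remark~\ref{remark1}, which identifies $|L_\alpha|$ with the number of returns of the Dyck path associated to $\alpha$. Under the uniform measure on $\NDPF_n$, which transports to the uniform measure on Dyck paths of semilength $n$, it therefore suffices to count the Dyck paths of semilength $n$ having exactly $k$ returns to the $x$-axis, and divide by $C_n$.

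The main combinatorial step is the arch decomposition already exploited in the proof of Lemma~\ref{lem:weaklyincreasingwithfixedsetofluckycars}. A Dyck path with exactly $k$ returns factors uniquely as a concatenation of $k$ primitive blocks, each of the form $U P D$ with $P$ an arbitrary (possibly empty) Dyck path; a primitive block of semilength $b+1$ thus contributes $C_b$ choices. Equivalently, summing the product formula of Lemma~\ref{lem:weaklyincreasingwithfixedsetofluckycars} over all lucky sets $L=\{1=l_1<l_2<\cdots<l_k\}$ and reparametrizing by the gap sizes $x_j$ (with $x_j=l_{j+1}-l_j-1$ for $j<k$ and $x_k=n-l_k$), one checks that $\sum_{j=1}^k x_j = n-k$ with each $x_j\ge 0$. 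Hence the number of weakly increasing parking functions with exactly $k$ lucky cars equals
\[
\sum_{\substack{x_1+\cdots+x_k=n-k\\ x_j\ge 0}}\ \prod_{j=1}^{k} C_{x_j}
= [t^{\,n-k}]\,C(t)^{k},
\]
where $C(t)=\sum_{m\ge 0}C_m t^m$ is the Catalan generating function.

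The key identity is then the closed form for powers of $C(t)$, namely $[t^m]C(t)^k=\frac{k}{2m+k}\binom{2m+k}{m}$, which I would establish either by Lagrange inversion applied to the functional equation $C(t)=1+tC(t)^2$ or, more combinatorially, by the cycle lemma (Raney's formula). Setting $m=n-k$ gives the count $\frac{k}{2n-k}\binom{2n-k}{n-k}$, and the absorption identity $\binom{2n-k}{n-k}=\frac{2n-k}{n}\binom{2n-k-1}{n-k}$ simplifies this to $\frac{k}{n}\binom{2n-k-1}{n-k}$. Dividing by $C_n$ yields the claimed probability. The step I expect to be the genuine obstacle is establishing the Catalan-power identity; the arch decomposition and the gap reparametrization are routine, but the evaluation of $[t^m]C(t)^k$ is where the real content lies, and some care is needed at the boundary cases $k=1$ and $k=n$ so that the formula remains valid across the whole range $1\le k\le n$.
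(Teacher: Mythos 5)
Your reduction coincides with the paper's: via Remark~\ref{remark1} and the bijection with Dyck paths, the problem becomes counting Dyck paths of semilength $n$ with exactly $k$ returns and dividing by $C_n$. The difference is that the paper simply cites Deutsch \cite{DyckPathEnumeration1999} for that count, whereas you derive it from scratch: the arch decomposition into $k$ primitive blocks $UPD$ reduces the count to the convolution $\sum_{x_1+\cdots+x_k=n-k}\prod_j C_{x_j}=[t^{n-k}]C(t)^k$, which Lagrange inversion (or the cycle lemma) evaluates as $\frac{k}{2n-k}\binom{2n-k}{n-k}=\frac{k}{n}\binom{2n-k-1}{n-k}$. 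All of these steps are sound, including the gap reparametrization matching Lemma~\ref{lem:weaklyincreasingwithfixedsetofluckycars}, and your boundary cases check out: $k=1$ gives $\frac{1}{n}\binom{2n-2}{n-1}=C_{n-1}$ (primitive paths) and $k=n$ gives $1$ (the path $(UD)^n$). Your self-contained route buys something concrete here: it provides a correctness check that the paper's bare citation does not.

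Indeed, your computation exposes a typo in the statement (inherited by the paper's display \eqref{eq:NumberofdyckPathsWithKReturns}): the correct factor is $\frac{k}{n}$, not $\frac{n}{k}$. A spot check confirms this: for $n=2$, $k=1$ the printed formula gives $\frac{2\binom{2}{1}}{C_2}=2>1$, an impossible probability, while the true value is $\frac{1}{2}$ (only $UUDD$ among the two Dyck paths of semilength $2$ has one return); moreover the printed numerators do not sum to $C_n$ over $k$, and they are inconsistent with the expected value $\frac{3n}{n+2}$ invoked in the paper's own corollary. The one genuine flaw in your write-up is the final sentence: having correctly obtained $\frac{k}{n}\binom{2n-k-1}{n-k}$, you assert that dividing by $C_n$ ``yields the claimed probability,'' silently identifying your correct expression with the misprinted one. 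Since $\frac{k}{n}\neq\frac{n}{k}$ unless $k=n$, you should have flagged the discrepancy explicitly; as written, your proof proves a (correct) statement that is not literally the one claimed.
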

\begin{proof}
    It is well-known  (see \cite{DyckPathEnumeration1999}) that the number of Dyck paths with $k$ returns is \be \label{eq:NumberofdyckPathsWithKReturns}
    \binom{2n-k-1}{n-k}\frac{n}{k}. 
    \ee Thus, the probability that a uniformly random  Dyck path has $k$ returns is given by dividing \eqref{eq:NumberofdyckPathsWithKReturns} by the total number of Dyck paths of length $n$ which is given by the Catalan number $C_n$.
    This result applies directly to weakly increasing parking functions by way of the aforementioned bijection between Dyck paths and weakly increasing parking functions.
\end{proof}

\section{Probabilistic Parking in the Unbounded Model}\label{sec:UnboundedModel}
Let $\alpha = (\alpha_1, \alpha_2, \ldots, \alpha_n) \in \idPF$.
Since $\alpha$ is a parking function whose outcome is the identity permutation, we have that, under the classical parking protocol, car $i$ displaces
\begin{equation*}
    d_i \coloneqq i - \alpha_i
\end{equation*}
spots before parking in spot $i$. 
Now, consider the arrival of car $i$ under the unbounded stochastic model, assuming spots $1, 2, \ldots, i - 1$ are all occupied. In this case, under the unbounded model car $i$ (and thus $X_{\alpha}^i=1$) if and only if car $i$ hits position $i$ before drifting off to infinity on the line.
As it turns out, the analysis for this single-car setting reduces to the analysis of McMullen~\cite{mcmullen2020drunken} for the ``bust time'' of a gambler with some initial endowment. In particular, McMullen considers the following classical gambler's ruin set up. A gambler with an initial endowment of $k$ dollars begins to place bets in a game where, with probability $p$, the gambler wins one dollar, and with probability $q=1-p$, they lose one dollar. McMullen is interested in evaluating the function $c(b,k)$ which is the number of ways the gambler can become ruined (i.e., be left with zero dollars) after starting to gamble with $k$ dollars. McMullen showed that $c(b,k)$ satisfies the recurrence relation 
\be \label{eq:McMullenRecurrence}
c(b, k)=c(b-1, k-1)+c(b-1, k+1)\eqcom
\ee
with boundary conditions
$c(0, 0) = 1$, $c(\kappa, \kappa) = 1$,  and $c(\kappa,0)= 0$.
One can also think of this as the number of paths on $\Z^{+}$ of length $b$ starting from position $k$ which stay positive until ending at position $0$.

Translating this to our probabilistic parking process, when $\alpha$ is a parking function whose outcome is the identity permutation, car $i$ starts its random walk on $\mathbb{Z}$ in spot $\alpha_i \in [i]$, and it must displace $d_i$ spots to the right of spot $\alpha_i$ before parking in spot $i$.
In the best case scenario, in total, car $i$ displaces to left exactly $0$ times and to right exactly $d_i$ times, for a combined displacement of $d_i$.
This can happen in exactly $1$ way.
In the next best case scenario, in total, car $i$ displaces to the left exactly $1$ time and to the right exactly $d_i + 1$ times, for a combined displacement of $d_i + 2$.
This can happen in exactly $d_i$ ways.
In general, if car $i$ displaces to the left exactly $\ell \in \mathbb{N}$ times, then it must displace to the right exactly $d_i + \ell$ times for a combined displacement of $d_i + 2\ell$. 

One can see that the number of ways in which car $i$ displaces a total of $\kappa \geq 0$ times, starting from $\delta \leq \kappa$ spots to the left of spot $i$, before parking in spot $i$ is exactly characterized by the recurrence relation given in \eqref{eq:McMullenRecurrence}. We denote this number by $c^i(\kappa, \delta)$. In particular, we have that car $i$ parks in spot $i$ after taking $d_i+2\ell$ steps of a random walk starting from position $\alpha_i$ if and only if a gambler becomes ruined after $d_i+2\ell$ rounds of a game starting with $d_i$ dollars. Additionally, the probability of winning a dollar corresponds to the probability of a car taking a step to the left, which is given by $q$. Thus we are interested in the value of $c^i(d_i+2\ell, d_i)$ with the same boundary conditions as in the gambler's ruin scenario. McMullen~\cite{mcmullen2020drunken} showed that that the recurrence relation \eqref{eq:McMullenRecurrence} unravels to obtain
\begin{equation*}
    c^i(d_i + 2\ell, d_i) = \frac{d_i}{\ell + d_i}\binom{2\ell + d_i - 1}{\ell},
\end{equation*}
which is the $\ell$th term of the $d_i$th Catalan self-convolution~\cite{catalan1887sur}.
From this, we obtain the following.

\begin{theorem}
\label{thm:UnboundedProbabilityOfParking}
Let $\alpha = (\alpha_1, \alpha_2, \ldots, \alpha_n) \in \idPF$.
Under the unbounded model, $\Pr[X_\alpha^1] = 1$ and, for any $1 < i \leq n$, we have
\begin{equation}\label{eq:unboundedProbOfParking}
    \Pr\left[X_\alpha^i = 1 \big| \prod_{j=1}^{i-1}X_\alpha^{j} = 1 \right] 
    = 
    \sum_{\ell = 0}^\infty \frac{d_i}{\ell + d_i}\binom{2\ell + d_i - 1}{\ell} p^{d_i + \ell}q^{\ell}.
\end{equation}
Moreover, $\Pr\left[X_\alpha^i = 1  \big| \prod_{j=1}^{i-1}X_\alpha^{j} = 1\right] = 1$ if and only if $1/2 \leq p \leq 1$.
\end{theorem}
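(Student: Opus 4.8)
The plan is to establish the two claims separately, since the formula for the conditional parking probability has already been reduced to a combinatorial sum via McMullen's analysis. For the first claim, I would argue that conditioned on spots $1, \ldots, i-1$ being occupied (which is exactly the event $\prod_{j=1}^{i-1} X_\alpha^j = 1$, and which forces car $i$ to start at $\alpha_i$ with the $d_i$ spots to its right still free), car $i$ parks if and only if its random walk hits position $i$ before escaping to $+\infty$. Decomposing over the number $\ell$ of leftward steps, the event that car $i$ parks after exactly $d_i + 2\ell$ steps occurs in $c^i(d_i + 2\ell, d_i) = \frac{d_i}{\ell + d_i}\binom{2\ell + d_i - 1}{\ell}$ ways, each path having probability $p^{d_i + \ell} q^\ell$ (since $d_i + \ell$ rightward steps occur with probability $p$ each and $\ell$ leftward steps with probability $q$ each). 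Summing over $\ell \geq 0$ yields Equation~\eqref{eq:unboundedProbOfParking}.

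For the ``moreover'' claim, the cleanest route is to recognize the sum as a known generating-function identity rather than to manipulate it term by term. The terms $\frac{d_i}{\ell + d_i}\binom{2\ell + d_i - 1}{\ell}$ are the coefficients of the $d_i$-th self-convolution of the Catalan generating function $C(x) = \sum_{\ell \geq 0} \frac{1}{\ell+1}\binom{2\ell}{\ell} x^\ell = \frac{1 - \sqrt{1 - 4x}}{2x}$. Concretely, I would use the classical identity
\[
\sum_{\ell = 0}^\infty \frac{d_i}{\ell + d_i}\binom{2\ell + d_i - 1}{\ell} x^\ell = \big(C(x)\big)^{d_i} = \left(\frac{1 - \sqrt{1 - 4x}}{2x}\right)^{d_i}\eqpd
\]
Substituting $x = pq$ and multiplying by $p^{d_i}$ gives the probability as $\left(p\,C(pq)\right)^{d_i} = \left(\frac{1 - \sqrt{1 - 4pq}}{2q}\right)^{d_i}$. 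Since $1 - 4pq = (p - q)^2 = (2p-1)^2$, we get $\sqrt{1-4pq} = |p - q|$, so the probability equals $\left(\frac{1 - |p-q|}{2q}\right)^{d_i}$. When $p \geq 1/2$ we have $|p - q| = p - q$ and the base collapses to $\frac{1 - (p-q)}{2q} = \frac{2q}{2q} = 1$; when $p < 1/2$ we have $|p-q| = q - p$ and the base equals $\frac{1 - (q - p)}{2q} = \frac{2p}{2q} = p/q < 1$, so the $d_i$-th power is strictly less than $1$. This proves the claimed equivalence.

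The main obstacle is justifying the convergence of the infinite series and the validity of the generating-function substitution at $x = pq$, rather than the algebra itself. The radius of convergence of $C(x)$ is $1/4$, and $pq \leq 1/4$ with equality exactly at $p = 1/2$; one must confirm the identity extends to the boundary point $x = 1/4$ (where $C(1/4) = 2$ converges, by Abel's theorem applied to the nonnegative-term series) so that the $p = 1/2$ case is handled correctly. I would also note carefully that at $p = 1/2$ the probability is $1$, consistent with the recurrent symmetric walk almost surely hitting any fixed site; this is the borderline case where escape to infinity has probability zero. The remaining steps—verifying the per-path probability weights and the indexing of leftward versus rightward steps—are routine once the combinatorial setup inherited from McMullen's recurrence is in place.
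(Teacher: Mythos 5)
Your proposal is correct, and for the displayed formula \eqref{eq:unboundedProbOfParking} it is exactly the paper's argument: conditioning on cars $1,\dots,i-1$ occupying spots $1,\dots,i-1$ (so that car $i$ parks if and only if its walk hits spot $i$), decomposing over the number $\ell$ of leftward steps, counting first-hitting paths by McMullen's $c^i(d_i+2\ell,\,d_i)=\frac{d_i}{\ell+d_i}\binom{2\ell+d_i-1}{\ell}$, and weighting each such path by $p^{d_i+\ell}q^{\ell}$. Where you genuinely diverge is the ``moreover'' dichotomy. The paper never evaluates the series itself: it invokes McMullen's result that for $0\le p<\tfrac12$ the sum simplifies to $\left(\frac{p}{q}\right)^{d_i}$, and leaves the $p\ge\tfrac12$ case to the same external reference (equivalently, to the standard fact that a non-negatively biased walk hits any site to its right almost surely). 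You instead close the sum directly via the Catalan generating function $C(x)=\frac{1-\sqrt{1-4x}}{2x}$ and the convolution identity $\sum_{\ell\ge 0}\frac{d}{\ell+d}\binom{2\ell+d-1}{\ell}x^{\ell}=C(x)^{d}$, substituting $x=pq$ to get $\bigl(p\,C(pq)\bigr)^{d_i}=\left(\frac{1-|p-q|}{2q}\right)^{d_i}$, which collapses to $1$ for $p\ge\tfrac12$ and to $\left(\frac{p}{q}\right)^{d_i}<1$ for $p<\tfrac12$; your Abel-theorem remark correctly justifies the substitution at the boundary point $pq=\tfrac14$ when $p=\tfrac12$. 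This buys a self-contained proof of the equivalence that also recovers, as a byproduct, the simplification the paper states only as a remark after the theorem; the cost is needing the convolution identity, whereas the paper's route is shorter but rests entirely on the citation. One caveat your proof shares with the paper's statement: when $d_i=0$ (i.e., $\alpha_i=i$) the conditional parking probability is $1$ for every $p$, so the ``only if'' direction of the dichotomy implicitly assumes $d_i\ge 1$; your phrase ``the $d_i$-th power is strictly less than $1$'' silently uses this same assumption.
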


Note that McMullen showed that in the case where $0\leq p <\frac{1}{2}$, that the left hand side of \eqref{eq:unboundedProbOfParking} simplifies to 
$\left(\frac{p}{q}\right)^{d_i}$.
Recall that, in the unbounded model, our stopping time is defined as
\[\tau_{\alpha}^{i} \coloneqq \min\{ t \geq 0\ : Z_{t}^i = i \mbox{ or } Z_{t}^i = \infty \} \eqpd \]
Using this, we obtain the following result about the expected value.
\begin{theorem}
\label{thm:UnboundedExpectedTimeVariance}
Let $\alpha = (\alpha_1, \alpha_2, \ldots, \alpha_n) \in \idPF$.
Under the unbounded model, $E[\tau_\alpha^1] = 0$ and, for any $1 < i \leq n$ and $1/2 < p \leq 1$, we have
\begin{align*}
    \E\left[\,\tau_\alpha^i \big| \prod_{j=1}^{i-1}X_\alpha^{j} = 1\right] 
    &= 
    \frac{d_i}{p-q}
\intertext{and}
    \Var\left[\tau_\alpha^i \big| \prod_{j=1}^{i-1}X_\alpha^{j} = 1\right] 
    &= 
    \frac{4d_ip \cdot q}{(p-q)^3}.
\end{align*}
\end{theorem}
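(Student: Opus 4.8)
The plan is to reduce the conditioned dynamics of car $i$ to a biased random walk on the integers and then read off both moments of the absorption time from Wald's identities. Conditioning on $\prod_{j=1}^{i-1}X_\alpha^j = 1$ means spots $1,\ldots,i-1$ are occupied when car $i$ arrives, so car $i$ performs a simple random walk starting at $\alpha_i$, stepping right with probability $p$ and left with probability $q$, until it either lands on spot $i$ (and parks) or escapes to $+\infty$. It is cleanest to track the \emph{remaining displacement} $S_t \coloneqq i - Z_t^i$, so that $S_0 = d_i$, each step sends $S_{t+1}=S_t-1$ with probability $p$ and $S_{t+1}=S_t+1$ with probability $q$, and parking corresponds to absorption at $S=0$. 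Since $1/2 < p \le 1$, the walk $S_t$ has strictly negative drift $\E[S_{t+1}-S_t]=q-p=-(p-q)<0$, so $0$ is reached almost surely; in particular car $i$ parks with probability $1$ (matching Theorem~\ref{thm:UnboundedProbabilityOfParking}), so the conditioning on parking is vacuous and $\tau_\alpha^i$ is just the first-passage time of $S_t$ to $0$.

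For the mean I would apply Wald's first identity, equivalently optional stopping for the martingale $S_t + (p-q)t$. Writing $S_t = d_i + \sum_{k=1}^t \xi_k$ with the $\xi_k$ i.i.d.\ of mean $\mu = q-p$, optional stopping at $\tau \coloneqq \tau_\alpha^i$ gives $\E[S_\tau] = d_i + \mu\,\E[\tau]$. Since $S_\tau = 0$ almost surely, this rearranges to $\E[\tau] = d_i/(p-q)$, the claimed expectation.

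For the variance I would invoke Wald's second identity, $\E\bigl[(S_\tau - d_i - \mu\tau)^2\bigr] = \sigma^2\,\E[\tau]$, where $\sigma^2 = \Var(\xi_1) = 1-(q-p)^2 = 4pq$. Substituting $S_\tau = 0$ and $\mu = -(p-q)$ yields $\E[(-d_i + (p-q)\tau)^2] = 4pq\,\E[\tau]$. Expanding and using $\E[\tau]=d_i/(p-q)$ to cancel the linear term leaves $(p-q)^2\,\E[\tau^2] = d_i^2 + 4pq\,d_i/(p-q)$, whence $\Var(\tau) = \E[\tau^2] - (\E[\tau])^2 = 4d_i\,pq/(p-q)^3$, exactly as stated.

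The main obstacle is justifying the two Wald identities, i.e.\ verifying the integrability hypotheses $\E[\tau]<\infty$ and $\E[\tau^2]<\infty$ that legitimize the optional-stopping manipulations. This is precisely where the strict inequality $p>1/2$ enters: because $S_t$ has strictly negative drift, its first-passage time to $0$ has exponentially decaying tails (for instance via a standard exponential-martingale or large-deviations bound on the right-drifting walk), so all moments of $\tau$ are finite. Once this tail estimate is in place both identities apply verbatim and the remaining steps are the elementary algebra above; note that at $p=1/2$ the walk is null-recurrent and $\E[\tau]=\infty$, which is why the statement restricts to $1/2<p\le 1$.
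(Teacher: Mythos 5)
Your proof is correct, and it reaches the theorem by a genuinely different route than the paper. The paper does not give a self-contained derivation of the moments: it reduces car $i$'s walk to McMullen's gambler's-ruin setup, counts the paths of total length $d_i+2\ell$ via the recurrence $c(b,k)=c(b-1,k-1)+c(b-1,k+1)$ to get the Catalan self-convolution coefficients $\frac{d_i}{\ell+d_i}\binom{2\ell+d_i-1}{\ell}$, and then quotes the mean $d_i/(p-q)$ and variance $4d_ipq/(p-q)^3$ from McMullen's analysis of the bust time---in effect, the paper's route passes through the \emph{exact distribution} of $\tau_\alpha^i$ (the probability of parking in exactly $d_i+2\ell$ steps), of which the moments are corollaries. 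You instead bypass the distribution entirely: after reducing to the remaining-displacement walk $S_t = i - Z_t^i$ with drift $-(p-q)$, you extract both moments from Wald's first and second identities (optional stopping for $S_t+(p-q)t$ and for the centered quadratic martingale), with $\sigma^2 = 1-(p-q)^2 = 4pq$, and your algebra checks out, including the cancellation giving $(p-q)^2\E[\tau^2]=d_i^2+4pqd_i/(p-q)$. Your approach buys a shorter, self-contained argument whose hypotheses are honestly verified---you correctly identify that the whole proof hinges on $\E[\tau]<\infty$ and $\E[\tau^2]<\infty$, which follow from exponential tails under strictly negative drift, and you correctly note that the conditioning on the first $i-1$ cars is vacuous for car $i$'s walk by independence, and that parking is almost sure for $p>1/2$ so no further conditioning on $X_\alpha^i$ is needed; the paper's combinatorial route buys more (the full law of the parking time, and the Catalan-convolution structure it exploits in Theorem~\ref{thm:UnboundedProbabilityOfParking}). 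One trivial slip worth fixing: since $\alpha_i \le i$ and steps are $\pm 1$, the walk cannot escape to $+\infty$ without first hitting spot $i$; the failure mode (relevant only for $p<1/2$) is drifting to $-\infty$, not $+\infty$. This does not affect your argument, since for $p>1/2$ absorption at $S=0$ is almost sure either way.
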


\begin{corollary}\label{cor:ResultsForAllCarsUnbounded}
Let $\alpha = (\alpha_1, \alpha_2, \ldots, \alpha_n) \in \idPF$.
Under the unbounded model, we have,
\[
  \Pr\left[X_\alpha = 1 \right] = \begin{cases}
    1 &  \mbox{ if }  \frac{1}{2}\leq p \leq 1\\
    \left(\frac{p}{q}\right)^{\sum\limits_i^nd_i} & \mbox{ if } 0\leq p <\frac{1}{2}\eqpd
  \end{cases}
\]
Furthermore, for $1/2 < p \leq 1$, we have
\begin{align*}
    \E\left[\,\tau_\alpha \big| X_{\alpha}=1\right] 
    &= 
    \frac{1}{p-q} \sum_{i=1}^n d_i
\intertext{and}
    \Var\left[\tau_\alpha | X_{\alpha}=1\right] 
    &= 
    \frac{4p\cdot q}{(p-q)^3} \sum_{i=1}^n d_i.
\end{align*}
\end{corollary}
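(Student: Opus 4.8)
The plan is to derive both parts of Corollary~\ref{cor:ResultsForAllCarsUnbounded} from the single-car statements in Theorems~\ref{thm:UnboundedProbabilityOfParking} and~\ref{thm:UnboundedExpectedTimeVariance}, exploiting crucially that the hypothesis $\alpha \in \idPF$ forces a deterministic configuration of the already-parked cars. For the parking probability, I would first write $\{X_\alpha = 1\} = \{\prod_{i=1}^n X_\alpha^i = 1\}$ and apply the chain rule to telescope it as
\[
\Pr[X_\alpha = 1] = \prod_{i=1}^n \Pr\!\left[X_\alpha^i = 1 \,\Big|\, \prod_{j=1}^{i-1} X_\alpha^j = 1\right].
\]
The $i=1$ factor is $1$ because the first car is always lucky. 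For $i > 1$ I would substitute Theorem~\ref{thm:UnboundedProbabilityOfParking}: when $1/2 \le p \le 1$ every factor equals $1$, giving $\Pr[X_\alpha=1]=1$; when $0 \le p < 1/2$, McMullen's simplification makes each factor $(p/q)^{d_i}$ (which is $1$ whenever $d_i = 0$), so the product collapses to $(p/q)^{\sum_i d_i}$.

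For the conditional expectation I would invoke the additive decomposition already recorded in the introduction, namely
\[
\E[\tau_\alpha \mid X_\alpha = 1] = \sum_{i=1}^n \E\!\left[\tau_\alpha^i \mid Z^i_{\tau^i_\alpha} = i\right],
\]
and observe that for $1/2 < p \le 1$ car $i$ reaches spot $i$ almost surely once its predecessors have parked, so the event $\{Z^i_{\tau^i_\alpha} = i\}$ agrees (up to a null set) with the conditioning appearing in Theorem~\ref{thm:UnboundedExpectedTimeVariance}. Substituting $\E[\tau_\alpha^i \mid \cdots] = d_i/(p-q)$ and summing yields $\frac{1}{p-q}\sum_{i=1}^n d_i$, with the $i=1$ term contributing $0$ since $d_1 = 0$.

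The only genuinely new ingredient, and the step I expect to require the most care, is the conditional independence of $\tau_\alpha^1, \ldots, \tau_\alpha^n$ given $\{X_\alpha = 1\}$, which is what upgrades additivity of expectation to additivity of variance. The mechanism is the rigidity supplied by $\alpha \in \idPF$: by Lemma~\ref{lem:PFWhoseOutcomeIsIdentity}, whenever cars $1, \ldots, i-1$ all park they must occupy precisely the spots $1, \ldots, i-1$, irrespective of their individual trajectories. Hence the environment seen by car $i$ on the event that its predecessors park is the fixed occupied set $\{1, \ldots, i-1\}$; every site to the left of $\alpha_i$ is blocked, and the only reachable unoccupied site is spot $i$. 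Consequently car $i$'s search, and in particular its duration $\tau_\alpha^i$, is a deterministic functional of car $i$'s own (independent) increment sequence. I would make this precise by induction: letting $A_i$ denote the event that car $i$'s walk reaches $i$ in the environment $\{1,\ldots,i-1\}$, each $A_i$ is measurable with respect to car $i$'s increments alone, $\{X_\alpha = 1\} = \bigcap_{i=1}^n A_i$, and the $A_i$ (together with the times $\tau_\alpha^i$) are independent across $i$. Because the conditioning event factorizes over the independent increment families, the times remain independent after conditioning, so
\[
\Var[\tau_\alpha \mid X_\alpha = 1] = \sum_{i=1}^n \Var\!\left[\tau_\alpha^i \mid \cdots\right] = \sum_{i=1}^n \frac{4 d_i\, p\,q}{(p-q)^3} = \frac{4 p\,q}{(p-q)^3}\sum_{i=1}^n d_i,
\]
again by Theorem~\ref{thm:UnboundedExpectedTimeVariance}. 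Apart from this decoupling argument, every remaining step is a mechanical substitution into the two preceding theorems, so the crux is articulating cleanly that the $\idPF$ rigidity makes $\tau_\alpha^i$ depend only on car $i$'s own moves while the parking event splits as a product of single-car hitting events.
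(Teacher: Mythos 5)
Your proposal is correct and follows essentially the same route as the paper: the chain-rule factorization $\Pr[X_\alpha = 1] = \prod_{i=1}^n \Pr\bigl[X_\alpha^i = 1 \mid \prod_{j=1}^{i-1} X_\alpha^j = 1\bigr]$ combined with Theorem~\ref{thm:UnboundedProbabilityOfParking} (and McMullen's simplification for $p < 1/2$), followed by the additive decomposition of $\E[\tau_\alpha \mid X_\alpha = 1]$ combined with Theorem~\ref{thm:UnboundedExpectedTimeVariance}. The one place you go beyond the paper is the variance: the paper simply cites Theorem~\ref{thm:UnboundedExpectedTimeVariance} and leaves the additivity of conditional variances implicit, whereas your argument — that on $\{X_\alpha = 1\}$ the environment seen by car $i$ is deterministically $\{1,\ldots,i-1\}$, so the conditioning event factorizes over the cars' independent increment sequences and the $\tau_\alpha^i$ remain independent after conditioning — correctly supplies the missing justification.
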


\begin{proof}
    Let $A_i$ denote the event that the $i$th car to arrive parks in spot $i$. Note that, by our assumption that $\alpha \in \idPF$, the probability that the first $k \in [n]$ cars park is given by
\[
    \Pr\left[\cap_{i=1}^{k} A_i\right] =
         \prod_{i=1}^{k} \Pr[A_i | \cap_{j=1}^{i-1} A_j] \eqpd
\]
Thus, using this fact and Theorem ~\ref{thm:UnboundedProbabilityOfParking} gives the formula for the probability that all of the cars park. To compute the expected time it takes for the parking process to complete, we use the fact that $\tau_{\alpha}$ is such that 
\[
\E\left[\,\tau_{\alpha} \mid X_{\alpha}=1\right]
=\sum_{i=1}^n \E\left[\,\tau_{\alpha}^i \mid Z^i_{\tau^i_{\alpha}}=i \right] \eqpd
\]
This in combination with Theorem ~\ref{thm:UnboundedExpectedTimeVariance} gives the desired result.
\end{proof}
\begin{remark}
    In Corollary~\ref{cor:ResultsForAllCarsUnbounded}, several of the results make use of the \textit{total displacement} statistic (the sum of the deterministic displacement of each of the cars) showing that it is indeed central to determining the probability, expected time, and variance of the probabilistic parking process. For an overview of results related to the total displacement statistic and parking functions, see \cite{yan2015parking}.
\end{remark}

\section{Probabilistic Parking with Open Boundaries}\label{sec:OpenBoundary}
In this section, we give analogous results for the probabilistic parking process with open boundaries.

\begin{figure}
  \begin{center}
\includegraphics[scale = 0.5]{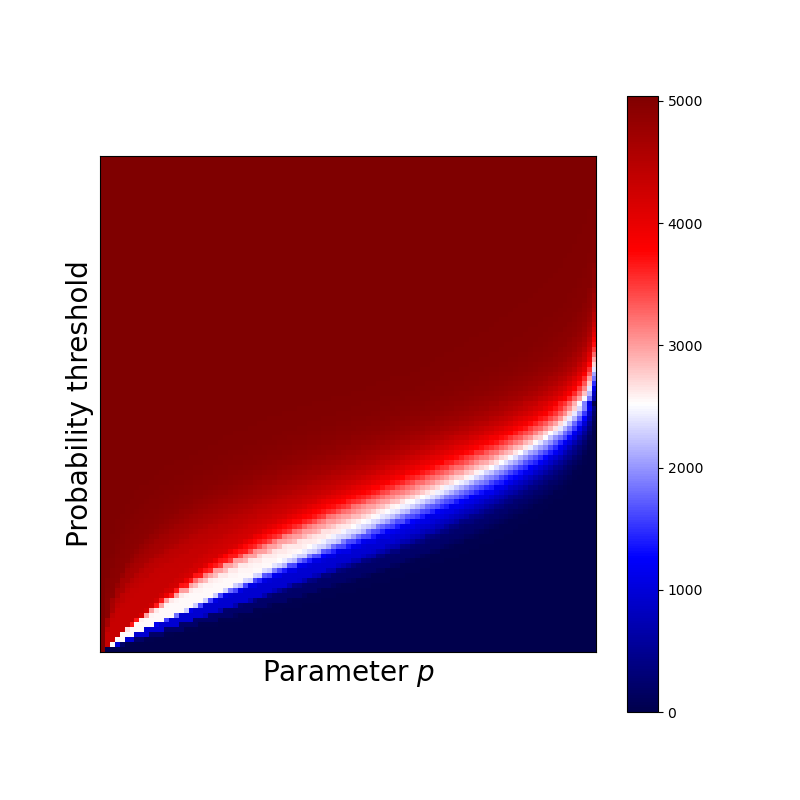}
\end{center}
    \caption{The color of each cell $(p, y)$ in the image above corresponds to the number of parking functions $\alpha \in \idPF$ where the probability that $\alpha$ parks with parameter $p$ is at most $y$, where $y$ is calculated according to Theorem~\ref{thm:FormulaswPF}. For example, when $p = 0$, the probability that $\alpha$ parks is always $0$ and the pixels are dark red for all $y$.}
    \label{fig:enter-label}
\end{figure}

\begin{theorem}\label{thm:FormulaswPF}
    Let $\alpha = (\alpha_{1}, \alpha_2, \ldots, \alpha_n) \in \idPF$.
In the probabilistic parking protocol with open boundary condition, we have that, for any $1 < i \leq n$,
\begin{equation*}
      \Pr\left[X_\alpha^i = 1 \mid \prod_{j=1}^{i-1}X_\alpha^{j} = 1\right] = 
    \begin{cases}
      \displaystyle\frac{
    \alpha_i}{i}
     &\mbox {if } \ \ p = \frac{1}{2}\\
    
         p^{i-\alpha_i}\left(\displaystyle\frac{p^{\alpha_i}-q^{\alpha_i}}{p^i-q^i} \right)\, & \mbox{otherwise}\eqpd
    \end{cases}
\end{equation*}

\end{theorem}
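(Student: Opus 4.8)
The plan is to reduce the conditional parking probability for car $i$ to the classical gambler's ruin problem. First I would condition on the event $\prod_{j=1}^{i-1} X_\alpha^j = 1$, that is, that the first $i-1$ cars all park. Since $\alpha \in \idPF$, an inductive argument (exactly as in the remark following Lemma~\ref{lem:PFWhoseOutcomeIsIdentity}, applied in the open-boundary setting) shows that under this conditioning the occupied spots are \emph{precisely} $1, 2, \ldots, i-1$, while spots $i, i+1, \ldots, n$ remain empty: car $1$ parks at $1$, and if cars $1,\ldots,k-1$ occupy $\{1,\ldots,k-1\}$ then car $k$, starting from $\alpha_k \le k$, can only park at the leftmost empty spot $k$. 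Car $i$ then begins its random walk at its preferred spot $\alpha_i \in [i]$, stepping right with probability $p$ and left with probability $q = 1-p$.

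The key observation is that in this configuration car $i$ can only park in spot $i$. Indeed, the leftmost empty spot is $i$; since $1,\ldots,i-1$ are all occupied and any empty spot $j > i$ can be reached only by first passing through the empty spot $i$, the car necessarily parks at $i$ the moment it arrives there, and exiting to the right before parking is impossible. Hence car $i$ parks if and only if its walk, started from $\alpha_i$, reaches $i$ before exiting the segment to the left at the absorbing boundary $0$. This is precisely the gambler's ruin problem on $\{0, 1, \ldots, i\}$: a gambler with initial fortune $\alpha_i$ gains a dollar with probability $p$ and loses one with probability $q$, and we seek the probability of reaching fortune $i$ before ruin at $0$.

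Next I would solve the gambler's ruin. Writing $h(k)$ for the probability of reaching $i$ before $0$ from fortune $k$, the relation $h(k) = p\,h(k+1) + q\,h(k-1)$ for $0 < k < i$, together with the boundary values $h(0) = 0$ and $h(i) = 1$, yields by the standard solution of this linear recurrence that $h(\alpha_i) = \alpha_i / i$ when $p = q = 1/2$, and $h(\alpha_i) = \frac{1 - (q/p)^{\alpha_i}}{1 - (q/p)^i}$ otherwise. A short algebraic manipulation—multiplying numerator and denominator by $p^i$ and factoring out $p^{\,i-\alpha_i}$—rewrites the second expression as $p^{\,i-\alpha_i}\frac{p^{\alpha_i} - q^{\alpha_i}}{p^i - q^i}$, matching the claimed formula. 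The degenerate case $\alpha_i = i$ (immediate parking, displacement $d_i = 0$) is consistent, since both branches return $1$.

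I expect the main obstacle to be the careful justification of the reduction rather than any computation: one must argue rigorously that, conditioned on the first $i-1$ cars parking, the walk of car $i$ has exactly the two absorbing outcomes ``reach $i$'' (park) and ``reach $0$'' (fail), with no possibility of parking at a spot other than $i$ and no possibility of leaving through the right boundary before parking. Once this identification with gambler's ruin is in place, the result follows immediately from the classical formula together with the elementary algebra above.
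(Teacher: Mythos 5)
Your proposal is correct and follows essentially the same route as the paper: both reduce the conditional probability to a gambler's ruin problem on $\{0,1,\ldots,i\}$ with absorbing states $0$ and $i$, solve the recurrence $h(k)=p\,h(k+1)+q\,h(k-1)$ with $h(0)=0$, $h(i)=1$, and rewrite $\frac{1-(q/p)^{\alpha_i}}{1-(q/p)^i}$ in the stated form. If anything, you justify the reduction (occupied spots are exactly $1,\ldots,i-1$; car $i$ cannot park elsewhere or exit right before reaching spot $i$) more explicitly than the paper does.
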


Thus, as a corollary we have the following.

\begin{corollary}
Let $\alpha = (\alpha_{1}, \alpha_2, \ldots, \alpha_n) \in \idPF$. Under the probabilistic parking protocol with open boundaries and initial preferences given by $\alpha$, the probability that all of the cars are able to park is given by 
\begin{equation*}
      \Pr\left[X_\alpha = 1 \right]=
    \begin{cases}
     \prod\limits_{k = 1}^{n} \frac{
    \alpha_k}{k}
     &\mbox {if } \ \ p = \frac{1}{2}\\
    
         \displaystyle\prod\limits_{k= 1}^{n} p^{k-\alpha_k}\left(\displaystyle\frac{p^{\alpha_k}-q^{\alpha_k}}{p^k-q^k} \right)\, & \mbox{otherwise}\eqpd
    \end{cases}
\end{equation*}

In particular, notice that when $p=q = \frac{1}{2}$ this is simply the product of all of the parking preferences normalized by $\frac{1}{n!}$.
\end{corollary}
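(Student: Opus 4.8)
The plan is to reduce the statement to a telescoping product of the single-car conditional probabilities already computed in Theorem~\ref{thm:FormulaswPF}. Writing $A_i \coloneqq \{X_\alpha^i = 1\}$ for the event that car $i$ parks, the event that every car parks is exactly $\{X_\alpha = 1\} = \bigcap_{i=1}^n A_i$, so the chain rule for conditional probability gives
\[
\Pr[X_\alpha = 1] = \prod_{i=1}^n \Pr\!\left[A_i \,\Big|\, \bigcap_{j=1}^{i-1} A_j\right] = \prod_{i=1}^n \Pr\!\left[X_\alpha^i = 1 \,\Big|\, \prod_{j=1}^{i-1} X_\alpha^j = 1\right].
\]
Once this factorization is in place, each factor is handed to us verbatim by Theorem~\ref{thm:FormulaswPF}, and reading off the two cases $p = \tfrac12$ and $p \neq \tfrac12$ produces the two claimed products.

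The step I expect to demand the most care is justifying that the $i$th factor above genuinely equals the gambler's-ruin quantity of Theorem~\ref{thm:FormulaswPF}, rather than some more complicated conditional law. Here I would invoke the observation recorded just after Lemma~\ref{lem:PFWhoseOutcomeIsIdentity}: because $\alpha \in \idPF$, i.e.\ $\alpha_j \le j$ for all $j$, conditioning on $\bigcap_{j<i} A_j$ forces the first $i-1$ cars to occupy precisely the spots $1, 2, \ldots, i-1$ in identity order. An easy induction confirms this---in such a configuration a car entering at $\alpha_i \le i$ can only ever come to rest in spot $i$, since moving rightward it reaches the free spot $i$ before any other and moving leftward it meets only occupied spots until it exits at $0$. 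Because car $i$ draws fresh coin flips independent of all earlier cars, conditioning on $\bigcap_{j<i} A_j$ influences car $i$ only through this now-determined occupied set, so its conditional parking probability is exactly the quantity of Theorem~\ref{thm:FormulaswPF}.

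It then remains to assemble the product and check the boundary index. Since $\alpha \in \idPF$ forces $\alpha_1 = 1$, both branches of Theorem~\ref{thm:FormulaswPF} evaluate to $1$ at $i=1$, so that factor is harmless and the product may be taken from $k=1$ to $n$ exactly as stated. Substituting the per-car formulas then yields $\prod_{k=1}^n \alpha_k/k$ when $p=\tfrac12$ and $\prod_{k=1}^n p^{k-\alpha_k}(p^{\alpha_k}-q^{\alpha_k})/(p^k-q^k)$ otherwise. For the closing remark, I would simply observe that at $p = q = \tfrac12$ the first product equals $\frac{\alpha_1\alpha_2\cdots\alpha_n}{1\cdot 2\cdots n} = \tfrac{1}{n!}\prod_{k=1}^n \alpha_k$, which is the product of all parking preferences normalized by $\tfrac1{n!}$.
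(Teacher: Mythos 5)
Your proposal is correct and follows essentially the same route as the paper: the paper proves the corollary inside the proof of Theorem~\ref{thm:FormulaswPF} via exactly this chain-rule factorization $\Pr[\cap_{i=1}^n A_i] = \prod_{i=1}^n \Pr[A_i \mid \cap_{j=1}^{i-1} A_j]$, identifying each factor with the gambler's-ruin probability $w^i_{\alpha_i}$ on the interval with absorbing barriers at $0$ and $i$. Your explicit justification that conditioning on the earlier cars forces spots $1,\ldots,i-1$ to be occupied in identity order (so the conditional law of car $i$ is determined by the occupied set and its fresh coin flips) is slightly more careful than the paper, which records this point only informally after Lemma~\ref{lem:PFWhoseOutcomeIsIdentity}.
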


\begin{proof}[Proof of Theorem \ref{thm:FormulaswPF}]
Again, let $A_i$ denote the event that the $i$th car to arrive parks in spot $i$, and recall that by our assumption that $\alpha \in \idPF$, the probability that the first $k \in [n]$ cars park is given by
\be\label{eq:intersectionOfEvents}
    \Pr\left[\cap_{i=1}^{k} A_i\right] =
         \prod_{i=1}^{k} \Pr[A_i | \cap_{j=1}^{i-1} A_j] \eqpd
\ee

Let $0 \leq w_j^i \leq 1$ be the probability that car $i$ parks before leaving the segment given that the cars in the set $\{1, 2, \ldots, i - 1\}$ park, and that it attempts to park directly in spot $0 \leq j \leq i$. In other words, $w_j^i$ is the probability car $i$ parks assuming $\alpha_i =j$.
Then, $w_i^i = 1$ and $w_0^i = 0$, and
more generally, for any $0 < j < i$ we have
\begin{equation}
    w_j^i = p \cdot w_{j+1}^i + q\cdot w_{j-1}^i.
\end{equation}
Solving this system of equations when $p \neq \frac{1}{2}$ (for the technique used refer to: \cite[Chapter~12.2]{grinstead2006grinstead}), assuming that car $i$ starts in position $s \in [n]$ where $ 1 \leq  s \leq i$,  we have that
\be\label{eq:probabilityofParkingOpen}
 w_s^i  = \frac{1-\left(\frac{q}{p}\right)^{s}}{1-\left(\frac{q}{p}\right)^i} = \frac{p^{i+s}-p^iq^s}{p^{i+s}-p^{s}q^{i}} 
 \eqpd
\ee

Recall that we use $Z_t^i$ to denote the random variable keeping track of the position of car $i$ at time $t$. Furthermore, recall that we have defined the stopping time
\[
\tau_{\alpha}^{i} \coloneqq \min\{ t \ : Z_{t}^i = i \mbox{ or } Z_{t}^i = 0 \}
\]
for each car $i \in [n]$. Therefore, by definition, we have that
\[
\Pr[A_i | \cap_{j=1}^{i-1} A_j] = \Pr[Z_{\tau_{\alpha}^{i}}^i = i\  |  \ \cap_{j=1}^{i-1} A_j] =   w^i_{\alpha_i}\eqpd
\]
And so, using \eqref{eq:intersectionOfEvents} and \eqref{eq:probabilityofParkingOpen}, the probability that all cars are able to park is given by 

\[
\Pr[X_\alpha = 1] = \prod\limits_{i = 1}^{n} w^i_{\alpha_i} = \prod\limits_{i = 1}^{n} \frac{p^{i+\alpha_i}-p^iq^{\alpha_i}}{p^{i+\alpha_i}-p^{\alpha_i}q^{i}} =\prod\limits_{i = 1}^{n}p^{i-\alpha_i}\left(\frac{p^{\alpha_i}-q^{\alpha_i}}{p^i-q^i}\right) \eqpd
\]
The case of $p=\frac{1}{2}$ is simpler, and can be handled similarly. 
\end{proof}

\subsection{Expected Time to Park}\label{sec:ExpectedTimeToPark}
Recall that $Z_t^i$ is the random variable keeping track of the position of car $i \in [n]$ at time $t \in [n] \cup \{0\}$ in the probabilistic parking protocol. Let 
\[\tau_{\alpha}^{i} \coloneqq \min\{ t \geq 0\ : Z_{t}^i = i \mbox{ or } Z_{t}^i = 0 \} \]

denote the random stopping time which is the first time that car $i$ hits position $i$ or position $0$.
Recall that we let $\tau_\alpha$ denote the time it takes for all cars to park. In other words, $\tau_{\alpha}$ is the stopping time such that  
\[ 
\E\left[\tau_{\alpha} \mid X_{\alpha}=1\right]
=\sum_{i=1}^n \E\left[\tau_{\alpha}^i \mid Z^i_{\tau^i_{\alpha}}=i \right]\eqpd
\]
Recall that, by the dynamics of the process, in order for all of the cars to successfully park we must have that each car $i$ parks on its ``first attempt''. In other words, if a car encounters either position $n+1$ or position $0$ before position $i$, then the entire parking process was unsuccessful. Thus, we have the following theorem giving the expected time for a single car to park, given that all previous cars park. 
\begin{theorem} \label{thm:ExpectedTimeOpenBoundaries}
Given  $\alpha = (
\alpha_1, \alpha_2, \ldots, \alpha_n) \in \idPF$ the expected time for a single car to park under the probabilistic parking protocol with open boundaries, given that all the previous cars have parked, is given by
\[
\E\left[\tau_{\alpha}^i ~|~\prod_{j=1}^{i}X_\alpha^{j} = 1 \right] =
 \begin{cases}
     \frac{1}{3}(i^2 - \alpha_i^2) 
     &\mbox {if } \ \ p =\frac{1}{2}\\
      \frac{1}{p-q}\left( \frac{
    i\left(1 + \left(\frac{q}{p}\right)^i\right)
}{ \left(1 - \left(\frac{q}{p}\right)^i\right)} - \frac{\alpha_i \left(1 + \left(\frac{q}{p}\right)^{\alpha_i}\right)}{\left(1 - \left(\frac{q}{p}\right)^{\alpha_i}\right)}\right)  \, & \mbox{otherwise}\eqpd
         
    \end{cases}
\]
\end{theorem}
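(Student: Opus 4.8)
The plan is to recognize $\E[\tau_\alpha^i \mid \prod_{j=1}^{i}X_\alpha^{j} = 1]$ as the expected absorption time of a gambler's ruin walk \emph{conditioned on winning}, and to compute it by solving a single inhomogeneous second-order difference equation. First I would reduce to the random walk picture already established in the proof of Theorem~\ref{thm:FormulaswPF}: given that cars $1,\ldots,i-1$ have parked, they occupy spots $1,\ldots,i-1$, so car $i$ performs a $p$-biased random walk on $\{0,1,\ldots,i\}$ starting from $\alpha_i$, absorbed at $i$ (it parks) or at $0$ (it falls off the left boundary and fails). Under this identification, the conditioning event $\{\prod_{j=1}^{i}X_\alpha^{j}=1\}$ given $\{\prod_{j=1}^{i-1}X_\alpha^{j}=1\}$ is exactly the event that the walk is absorbed at $i$, whose probability is $w_{\alpha_i}^i=(1-(q/p)^{\alpha_i})/(1-(q/p)^i)$ by \eqref{eq:probabilityofParkingOpen}.

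The key step is to introduce $g_s \coloneqq \E[\tau^i \mathbf{1}_{\{\text{absorbed at } i\}} \mid Z_0^i = s]$, the expected duration restricted to the winning event, so that the sought quantity is $g_{\alpha_i}/w_{\alpha_i}^i$. Conditioning on the first step and using that $w_s^i$ is harmonic for the walk (that is, $w_s^i = p\,w_{s+1}^i + q\,w_{s-1}^i$), one obtains the recurrence $p\,g_{s+1} - g_s + q\,g_{s-1} = -w_s^i$ for $0 < s < i$, with boundary conditions $g_0 = g_i = 0$. I would then solve this linear recurrence explicitly: the homogeneous solutions are $1$ and $(q/p)^s$, and since the forcing term $-w_s^i$ contains both a constant and a $(q/p)^s$ piece, the particular solution carries a term linear in $s$ together with a \emph{resonant} term proportional to $s(q/p)^s$. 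Imposing $g_0 = g_i = 0$ pins down the remaining constants, and dividing $g_{\alpha_i}$ by $w_{\alpha_i}^i$ yields the asymmetric formula after simplification.

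For the symmetric case $p = q = \tfrac12$ the characteristic root $q/p = 1$ is doubled, so I would treat it separately: here $w_s^i = s/i$ and the recurrence becomes $g_{s+1} - 2g_s + g_{s-1} = -2s/i$, whose particular solution is cubic in $s$; solving with $g_0 = g_i = 0$ and dividing by $w_{\alpha_i}^i = \alpha_i/i$ gives $\tfrac13(i^2 - \alpha_i^2)$. As consistency checks I would verify that the asymmetric expression tends to $d_i = i-\alpha_i$ as $p\to 1$ (deterministic rightward drift) and degenerates to the symmetric answer as $p\to\tfrac12$ via L'Hôpital.

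I expect the main obstacle to be purely computational: correctly handling the resonance when the forcing term shares a root with the homogeneous equation (the $s(q/p)^s$ term), and then simplifying the ratio $g_{\alpha_i}/w_{\alpha_i}^i$ into the closed form $\frac{1}{p-q}\left(\frac{i(1+(q/p)^i)}{1-(q/p)^i} - \frac{\alpha_i(1+(q/p)^{\alpha_i})}{1-(q/p)^{\alpha_i}}\right)$. Keeping the algebra organized by writing everything in terms of $r \coloneqq q/p$ and recognizing the recurring combination $\frac{1+r^m}{1-r^m}$ as the natural building block will be the cleanest route.
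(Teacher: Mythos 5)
Your proposal is correct and takes essentially the same route as the paper: both reduce the problem to the duration of a gambler's-ruin walk on $\{0,1,\ldots,i\}$ conditioned on absorption at $i$, and both come down to the same second-order boundary-value problem with $g_0 = g_i = 0$ whose solution is the stated closed form. The only differences are cosmetic: you derive the recurrence for the restricted expectation $\E_s\left[\tau\,\mathbf{1}_{\{\text{absorbed at } i\}}\right]$ by first-step analysis and solve it constructively (handling the resonant terms $s$ and $s(q/p)^s$), whereas the paper derives the equivalent weighted recurrence for the conditional expectation via an explicit path decomposition and then verifies the guessed closed form in Proposition~\ref{prop:SolutionToLinearSystemAppendix}; the two linear systems coincide under the substitution $g_s \leftrightarrow g_s^i w_s^i$.
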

The following corollary gives the expected time for all of the cars to park. 
\begin{corollary}
Given  $\alpha = (
\alpha_1, \alpha_2, \ldots, \alpha_n) \in \idPF$ the expected time for all of the cars to park under the probabilistic parking protocol with open boundaries is given by
\[ 
\E\left[\,\tau_{\alpha}~|~ X_\alpha =1 \right] =
 \begin{cases}
     \frac{1}{3}\sum\limits_{i=1}^n(i^2 - \alpha_i^2) =\frac{2n^3+3n^2+n}{18}-\frac{1}{3}\sum\limits_{i}^n\alpha_i^2
     &\mbox {if } \ \ p = \frac{1}{2}\\
      \frac{1}{p-q}\sum\limits_{i=1}^n \frac{
    i\left(1 + \left(\frac{q}{p}\right)^i\right)
}{ \left(1 - \left(\frac{q}{p}\right)^i\right)} - \frac{\alpha_i \left(1 + \left(\frac{q}{p}\right)^{\alpha_i}\right)}{\left(1 - \left(\frac{q}{p}\right)^{\alpha_i}\right)}  \, & \mbox{otherwise}\eqpd
\end{cases}
\]
\end{corollary}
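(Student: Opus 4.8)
The plan is to recognize car $i$'s search as a classical gambler's ruin problem and to compute a \emph{conditional} expected absorption time. Conditioned on spots $1,\dots,i-1$ being occupied, car $i$ performs a $p$-biased walk $Z^i_t$ on $\{0,1,\dots,i\}$ started at $\alpha_i$, absorbed at $i$ (it parks) or at $0$ (it fails); the absorption-at-the-top probability is exactly $w^i_{\alpha_i}=h(\alpha_i)$ from~\eqref{eq:probabilityofParkingOpen}, where I write $h(s):=(1-r^s)/(1-r^i)$ with $r:=q/p$. Since $\E[\tau_\alpha^i\mid \prod_{j=1}^i X_\alpha^j=1]$ is the expectation of $\tau_\alpha^i$ conditioned on the walk reaching $i$ before $0$, the natural object to analyze is the \emph{parking-weighted} expected time $g(s):=\E_s[\tau\,\mathbf{1}_{\{Z^i_\tau=i\}}]$ (where $\E_s$ denotes starting the walk at position $s$), because the quantity sought is simply $g(\alpha_i)/h(\alpha_i)$.

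First I would set up a difference equation for $g$ by first-step analysis. Conditioning on the first step and using that the walk restarts gives $g(s)=p\,g(s+1)+q\,g(s-1)+\bigl(p\,h(s+1)+q\,h(s-1)\bigr)$; since $h$ is harmonic for the walk, the last bracket collapses to $h(s)$, yielding
\[
p\,g(s+1)-g(s)+q\,g(s-1)=-h(s),\qquad g(0)=g(i)=0.
\]
The homogeneous equation has characteristic roots $1$ and $r$, so its solution space is spanned by $\{1,r^s\}$. Because the forcing term $-h(s)$ is itself a linear combination of these same two functions, the particular solution is \emph{doubly resonant} and I would seek it in the form $g_p(s)=as+b\,s\,r^s$; undetermined coefficients (using the identities $pr=q$ and $qr^{-1}=p$) pin down $a$ and $b$, giving $g_p(s)=-\,s(1+r^s)/\bigl((1-r^i)(p-q)\bigr)$. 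Imposing the boundary conditions on $g(s)=A+Br^s+g_p(s)$ then forces $B=-A$ and determines $A$.

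The key simplification, which makes the final formula fall out cleanly, is that the surviving homogeneous part $A(1-r^s)$ is \emph{proportional} to $h(s)$; hence dividing $g$ by $h$ cancels this term into a constant and leaves
\[
\E_s\!\left[\tau\mid Z^i_\tau=i\right]=\frac{g(s)}{h(s)}=\frac{1}{p-q}\left(\frac{i(1+r^i)}{1-r^i}-\frac{s(1+r^s)}{1-r^s}\right),
\]
which is precisely the stated formula at $s=\alpha_i$. The unbiased case $p=\tfrac12$ is handled by the same method, now with $h(s)=s/i$ and homogeneous span $\{1,s\}$, so the resonant particular solution is cubic; solving the discrete Poisson equation $g(s+1)-2g(s)+g(s-1)=-2s/i$ gives $g(s)=s(i^2-s^2)/(3i)$ and hence $g(s)/h(s)=(i^2-\alpha_i^2)/3$.

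The obstacles are essentially technical rather than conceptual: correctly accounting for the double resonance when choosing the particular solution, carrying out the algebraic simplification of $g/h$ into the stated $\coth$-type expression, and justifying the first-step analysis. This last point is valid because the biased walk is absorbed almost surely with exponentially small tails, so $\tau$ and all the weighted expectations are finite and the conditioning is well defined. The accompanying corollary then follows immediately by linearity through $\E[\tau_\alpha\mid X_\alpha=1]=\sum_{i=1}^n\E[\tau_\alpha^i\mid Z^i_{\tau^i_\alpha}=i]$, together with the closed form $\sum_{i=1}^n(i^2)=\tfrac{2n^3+3n^2+n}{6}$ in the symmetric case.
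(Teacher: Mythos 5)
Your proposal is correct, and it establishes the same underlying facts as the paper: the per-car conditional expected absorption time of Theorem~\ref{thm:ExpectedTimeOpenBoundaries}, summed over cars via the identity $\E[\tau_\alpha \mid X_\alpha = 1] = \sum_{i=1}^n \E[\tau_\alpha^i \mid Z^i_{\tau^i_\alpha} = i]$, which is exactly how the paper deduces the corollary. Where you diverge is in how the single-car formula is obtained. The paper derives a recurrence for the conditional expectation $g^i_s$ directly, by an explicit decomposition of the path-sum according to the first step, arriving at $g_s w_s = p\,w_{s+1}(g_{s+1}+1) + q\,w_{s-1}(g_{s-1}+1)$, and then \emph{verifies} the closed form by substitution into the linear system (Proposition~\ref{prop:SolutionToLinearSystemAppendix}). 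You instead work with the unconditioned weighted expectation $g(s) = \E_s[\tau\,\mathbf{1}_{\{Z_\tau = i\}}]$; note that your difference equation is literally the paper's recursion after the substitution $g(s) = g^i_s w_s$, with the harmonicity $p\,h(s+1)+q\,h(s-1)=h(s)$ replacing the paper's manipulation $p w_{s+1} + q w_{s-1} = w_s$. What your route buys is a \emph{constructive} solution: identifying the homogeneous span $\{1, r^s\}$, recognizing the doubly resonant forcing, and producing the particular solution $g_p(s) = -s(1+r^s)/\bigl((p-q)(1-r^i)\bigr)$ by undetermined coefficients (your computations check out, including the $p=\tfrac12$ cubic case $g(s) = s(i^2-s^2)/(3i)$), so no guessed formula needs to be verified after the fact; the observation that the surviving homogeneous part $A(1-r^s)$ is proportional to $h(s)$, so that $g/h$ collapses to the stated expression, is the cleanest part of the argument and is essentially a Doob $h$-transform cancellation. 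The paper's verification-style proof is shorter on the page but less illuminating about where the formula comes from. Your finiteness remark justifying the first-step analysis, and the closed form $\sum_{i=1}^n i^2 = \tfrac{2n^3+3n^2+n}{6}$ for the symmetric case, complete the corollary exactly as in the paper.
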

\begin{proof}[Proof of Theorem \ref{thm:ExpectedTimeOpenBoundaries}]
Let 
$g^{i}_s \coloneqq \E_s\left[ \tau_{\alpha}^{i}\mid \prod_{j=1}^{i-1}X_\alpha^{j} = 1 \right]$ where we use $\E_s$ to denote the expectation of the process starting from $s$. In other words $g^i_s$ is the expected time it takes for car $i$ to successfully park when starting from position $s$, and thus, $g^i_0 = g^i_{i} = 0$. Furthermore, letting $\gamma$ be a path from $s$ to $i$, we have that
\begin{align*}
g^{i}_s &= \sum_{\ell =1}^{\infty} \ell \Pr\left[ |\gamma| = \ell \big| \left\{Z_{\tau_{\alpha}^{i}}^i = i,  Z_{0}^i=s \right\} \right]\\
&= \sum_{\ell=1}^{\infty} \ell\left(w^i_{s}\right)^{-1}\Pr\left[ \left\{|\gamma| = \ell\right\}\cap \left\{Z_{\tau_{\alpha}^{i}}^i = i,  Z_{0}^i=s \right\}\right]\\
&= \left(w^i_{s}\right)^{-1}\sum_{\ell =1}^{\infty}\sum_{\substack{\substack{\substack{\substack{|\gamma| = \ell \\ \gamma_0 = s} \\ \gamma_1 = s+1}
\\ \vdots } \\ \gamma_{\ell} = i}}^{\infty} \ell \Pr\left[\gamma \cap \left\{Z_{\tau_{\alpha}^{i}}^i = i,  Z_{0}^i=s \right\}\right] + \sum_{\substack{\substack{\substack{\substack{|\gamma| = \ell\\ \gamma_0 = s} \\ \gamma_1 = s-1}
\\ \vdots } \\ \gamma_{\ell} = i}}^{\infty} \ell \Pr\left[ \gamma \cap \left\{Z_{\tau_{\alpha}^{i}}^i = i,  Z_{0}^i=s \right\}\right]\eqpd\\
\end{align*}
The last equality separates the sum into two, the first being the sum over paths which take their first step right, and the second sum is over the paths that take their first step left.
For the leftmost summation, we can make this first rightward step explicit and further decompose the sum into
\begin{align*}
&  \sum_{\ell=1}^{\infty} \sum_{\substack{\substack{\substack{\substack{|\gamma| = \ell \\ \gamma_0 = s} \\ \gamma_1 = s+1}
\\ \vdots } \\ \gamma_{\ell} = i}}^{\infty} \left((\ell -1)\Pr\left[ \gamma  \cap \left\{Z_{\tau_{\alpha}^{i}}^i = i,  Z_{0}^i=s \right\}\right] +  \Pr\left[ \gamma \cap \left\{Z_{\tau_{\alpha}^{i}}^i = i,  Z_{0}^i=s \right\}\right]\right)\nonumber\\
&= \sum_{\ell=1}^{\infty} p \sum_{\substack{\substack{\substack{|\beta| = \ell-1 \\ \beta = s+1}
\\ \vdots } \\ \beta_{\ell} = i}}^{\infty} \left((\ell -1)\Pr\left[ \beta \cap \left\{Z_{\tau_{\alpha}^{i}}^i = i,  Z_{0}^i=s+1 \right\}\right] +  \Pr\left[  \beta \cap \left\{Z_{\tau_{\alpha}^{i}}^i = i,  Z_{0}^i=s+1\right\}\right]\right)\nonumber\\
&= \sum_{\ell=1}^{\infty} p \sum_{\substack{\substack{\substack{|\beta| = \ell-1 \\ \beta = s+1}
\\ \vdots } \\ \beta_{\ell} = i}}^{\infty} (\ell -1)\Pr\left[ \beta \cap \left\{Z_{\tau_{\alpha}^{i}}^i = i,  Z_{0}^i=s+1 \right\}\right] + \sum_{\substack{\substack{\substack{|\beta| = \ell-1 \\ \beta = s+1}
\\ \vdots } \\ \beta_{\ell} = i}}^{\infty} \Pr\left[  \beta \cap \left\{Z_{\tau_{\alpha}^{i}}^i = i,  Z_{0}^i=s+1\right\}\right]\nonumber\\
\end{align*}

By the definition of $w^i_s$ and $g^i_s$ this last line is just 
\[
pw^i_{s+1}\left(g_{s+1}^i +1\right) \eqpd
\]

Repeating the above argument gives an analogous result for the case in which the path starts with a step to the left. Therefore, we have the recurrence relation

\[
\begin{split}
    g_{s}^i&=\left(w_{s}\right)^{-1}pw_{s+1}\left(g_{s+1}^i +1\right)  + \left(w_{s}\right)^{-1}qw_{s-1}\left(g_{s-1}^i +1\right) \\
    &=\frac{pw_{s+1}\left(g_{s+1}^i\right)}{w_{s} }
+ \frac{qw_{s-1}\left(g_{s-1}^i\right)}{w_{s} }
+\frac{pw_{s+1} +qw_{s-1} }{w_{s} }\\
&=\frac{pw_{s+1}\left(g_{s+1}^i\right)}{w_{s} }
+ \frac{qw_{s-1}\left(g_{s-1}^i\right)}{w_{s} }
+1
\end{split}
\]
where the equality holds by the recursive  definition of $w^i_s$.
In  Appendix~\ref{sec:appendix}, we establish Proposition~\ref{prop:SolutionToLinearSystemAppendix}, which shows that the linear system has solution 

\[
g_s^i= \frac{
    i\left(1 + \left(\frac{q}{p}\right)^i\right)
}{(p-q) \left(1 - \left(\frac{q}{p}\right)^i\right)} - \frac{s \left(1 + \left(\frac{q}{p}\right)^s\right)}{(p-q) \left(1 - \left(\frac{q}{p}\right)^s\right)}\eqpd
\]
 Finally, replacing $s$ with $\alpha_i$ for each car $i$, and taking the sum over all cars $i \in [n]$ yields the desired result.
\end{proof}

\begin{remark}
The set of parking functions that are weakly increasing are in bijection with those that are weakly decreasing via the involution that takes $i \mapsto n-i+1$, with the added condition that we reverse the order of entrance into the street. That is, the cars would now enter from the right, and perform the parking protocol moving toward spot 1.
This involution and change of parking direction in the parking protocol also give a bijection between the set of parking functions whose outcome is the identity permutation and the set of (right to left) parking functions whose outcome is the 
longest word, namely the permutation $n(n-1)(n-2)\cdots 321$ written in one-line notation. We denote the set of all such parking functions by $\PF_n(n\cdots 321)$ and note that they are characterized as the set of parking functions $\beta \in [n]^n$ such that the $i$th entry $\beta_i$ is at least $n-i+1$ for all $i \in [n]$.
By way of this bijection, all of our previous arguments and results for the probabilistic parking protocols (both in the open boundary and the unbounded models) work analogously when starting with the set $\PF_n(n\cdots 321)$,  of parking functions with outcome $n\cdots321$.  
\end{remark}

\subsection{Combinatorial Derivation of Expected Time to Park}
In this section, we start the process with preference list $\alpha \in \idPF$ and give a combinatorial derivation of the expected time it takes for all cars to successfully park under the probabilistic parking protocol with open boundaries. 
Since $\alpha\in\idPF$, we know that car $i$ must park in spot $i$. In order to calculate the expected time a single car takes to park, we enumerate the number of paths that car $i$ can take from spot $\alpha_i$ to spot $i$ in a fixed amount of time.

First, we consider the number of paths from spot $1$ to spot $i$ in $t$ steps. Following \cite{zigzag}, we let $a_{j,k}$ be the number of paths to spot $k+1$ using $j$ steps to the left. 
We can think of $k$ as the net number of spots that the car has moved to the right, so $k+2j$ is the total number of steps the paths enumerated by $a_{j,k}$ requires. 
There is one path to any spot using only right steps, so $a_{0,k}=1$. 
We also have that, analogously to \eqref{eq:McMullenRecurrence}, the number of paths can be determined by the recurrence relation $a_{j,k}=a_{j,k-1}+a_{j-1,k+1}$. This is because a path to spot $s$ can be obtained from either a path landing at spot $s-1$ followed by one right step, or a path landing in spot $s+1$ followed by one left step. Unlike in \eqref{eq:McMullenRecurrence} however, a path in the open boundary case cannot end in spot $0$. Lastly, since car $i$ always parks in spot $i$ before reaching spot $i+1$, we have that the number of paths landing in spot $k+1$ in $k+2j$ steps is the sequence $a_{j,k}$ from \cite{zigzag} given by

\begin{align}\label{recu}
a_{i,j} = 
\begin{cases}
    1 & \mbox{ if } j=0 \\
    a_{j-1,1} & \mbox { if } k=0, \ 1 \leq j \\
    a_{j,k-1} + a_{j-1,k+1} & \text{ if } 1 \leq k \leq i, \ 1 \leq j \\
    a_{j,\ell} & \mbox{ if } \ell=i+1, \ 1 \leq j.
\end{cases}
\end{align}
For any $i$, L\'aszl\'o N\'emeth and L\'aszl\'o Szalay refer to the sequence $(a_{j,k})_{j=0}^\infty$ as $A_k^{(i)}$. Using this notation, the number of paths that car $i$ has starting at spot 1, moving between spot $1$ and $i-1$ in the first $k+2j -1$ steps of the walk and ending at spot $i$ at step $k+2j$ step is the $j$th element in the sequence $A_{i+2}^{(i+2)}$. 
We use this to find the expected time it takes for car $i$ to travel from spot 1 to spot $i$, given that car $i$ eventually does park which we again denote by $g_1^i$. We remark that the approach in this section is very similar to that used by McMullen \cite{mcmullen2020drunken}, however in their case the terms of the sequence $A_k^{(i)}$ are Catalan convolutions because of the different boundary condition.  Using the  recurrence relation in \eqref{recu}, we have the following theorem.
\begin{theorem}\label{thm:ExVal}
Given a parking function $\alpha = (
\alpha_1, \alpha_2, \ldots, \alpha_n) \in \idPF$, the expected time for car $i$ to park in spot $i$ starting from spot 1 ($\alpha_i =1$) provided it parks and all the previous cars have parked under the probabilistic parking protocol with open boundaries is given by 
 \[g^i_1=\E_1\left[\tau_\alpha^i~|~\prod_{j=1}^{i}X_\alpha^{j} = 1 \right]=\sum_{k=0}^\infty \frac{ A_{i-3}^{(i-3)}(k) p^{k+i-1} q^{k} (2k+i-1)  }{w^i_1}.\]
\end{theorem}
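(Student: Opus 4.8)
The plan is to recognize $g^i_1$ as a restricted conditional expectation and then expand its numerator as a weighted sum over the successful lattice paths, grouped by their number of left steps. Writing $S_i$ for the event that car $i$ reaches spot $i$ before spot $0$ (given that cars $1,\dots,i-1$ have already parked), the definition of conditional expectation gives
\[
g^i_1 = \E_1\left[\tau^i_\alpha \mid S_i\right] = \frac{\E_1\left[\tau^i_\alpha\,\mathbf{1}_{S_i}\right]}{\P_1[S_i]} = \frac{1}{w^i_1}\,\E_1\left[\tau^i_\alpha\,\mathbf{1}_{S_i}\right],
\]
where the last equality uses $w^i_1 = \P_1[S_i]$ as defined just before \eqref{eq:probabilityofParkingOpen}. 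Because the walk lives on the finite segment $\{0,1,\dots,i\}$ with both endpoints absorbing, $\tau^i_\alpha$ has finite expectation for every $p$, so the nonnegative variable $\tau^i_\alpha\mathbf{1}_{S_i}$ is summable and Tonelli's theorem lets me write $\E_1[\tau^i_\alpha\mathbf{1}_{S_i}] = \sum_{\gamma}|\gamma|\,\P[\gamma]$, the sum ranging over all finite paths $\gamma$ started at spot $1$ that first meet $\{0,i\}$ at spot $i$.

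Next I would compute the length and weight of a successful path in terms of a single parameter. A path from spot $1$ that first reaches $\{0,i\}$ at spot $i$ has net displacement $i-1$, so if it uses exactly $k$ left steps it must use $k+i-1$ right steps; hence its length is $|\gamma| = 2k+i-1$ and its probability is $\P[\gamma] = p^{k+i-1}q^{k}$, both depending only on $k$. Letting $N_k$ be the number of such successful paths with exactly $k$ left steps, the numerator collapses to
\[
\E_1\left[\tau^i_\alpha\,\mathbf{1}_{S_i}\right] = \sum_{k=0}^{\infty}(2k+i-1)\,N_k\,p^{k+i-1}q^{k},
\]
and the theorem reduces to the purely combinatorial identity $N_k = A^{(i-3)}_{i-3}(k)$.

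The final and most delicate step is to identify $N_k$ with the prescribed term of the N\'emeth--Szalay sequence. Here I would argue directly from the recurrence \eqref{recu}: a successful path terminating at an interior spot $s$ arises either from a successful path terminating at $s-1$ followed by a right step, or from one terminating at $s+1$ followed by a left step, which is exactly the interior rule $a_{j,k}=a_{j,k-1}+a_{j-1,k+1}$. The two boundary clauses encode the remaining constraints: the walk may never occupy spot $0$ (so nothing is fed in from the lower boundary, matching the $k=0$ clause), while spot $i$ is absorbing and is visited only at the terminal step (matching the clause at $\ell=i+1$). Reconciling these boundary data with the offset built into $A^{(i-3)}_{i-3}$ fixes the index, and substituting $N_k=A^{(i-3)}_{i-3}(k)$ into the numerator and dividing by $w^i_1$ yields the stated formula. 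I expect this indexing bookkeeping to be the main obstacle, since an off-by-one in the boundary parameter would shift the entire sequence and break the identity; by contrast, the conditional-expectation identity and the interchange of summation in the earlier steps are routine.
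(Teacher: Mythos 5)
Your proposal is correct and follows essentially the same route as the paper's proof: both expand the conditional expectation as a sum over successful paths from spot $1$ to spot $i$ grouped by the number $k$ of left steps, use that each such path has length $2k+i-1$ and probability $p^{k+i-1}q^k$, divide by $w^i_1$ to account for conditioning on parking, and identify the path count with the N\'emeth--Szalay sequence via the recurrence \eqref{recu}. The only difference is presentational—you spell out the conditional-expectation/Tonelli justification and the matching of the recurrence's boundary clauses more explicitly, whereas the paper appeals to its preceding discussion for the same identification.
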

\begin{proof}
We have remarked that the $j$th element of $A_{i-3}^{(i-3)}$ is the number of paths from 1 to $i$ with $j$ steps to the left. Since we know any path from $1$ to $i$ has $i-1$ more right steps than left steps, we have $j + j + i-1$ total steps in the walk. The probability of such a path occurring from the set of any path is $q^j$ times $p^{j+i-1}$ for the left and right steps, respectively. Where we again use $w_1^i$ to denote the probability of car $i$ parking in spot $i$ before leaving the segment. Thus, since we only want to sample from the functions that park we multiply $p^{k+i-1}q^k$ by $1/w_1^i$ for the probability of selecting a path with $j$ left steps and $2j+i-1$ total steps, from those which never go to the left of spot 1. 
\end{proof}
By a similar argument as in the proof of Theorem \ref{thm:ExVal}, we also have the following.
\begin{corollary}
Given a parking function $\alpha = (
\alpha_1, \alpha_2, \ldots, \alpha_n) \in \idPF$, the expected time for car $i$ to park in spot $i$ starting from spot $i-1$ provided it parks and all the previous cars have parked
under the probabilistic parking protocol with open boundaries is given by 
    $$\E_{i-1}\left[\tau_\alpha^i~|~\prod_{j=1}^{i}X_\alpha^{j} = 1 \right] = \sum_{k=0}^\infty \frac{ A_{0}^{(i-3)}(k) p^{k} q^{k+1} (2k+1)  }{w^i_{i-1}}\eqpd$$
\end{corollary}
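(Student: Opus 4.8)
The plan is to mirror the proof of Theorem~\ref{thm:ExVal}, replacing the starting spot $1$ with the starting spot $i-1$ and tracking how the reduced net displacement changes the bookkeeping. First I would record the combinatorial structure of an admissible trajectory: starting from spot $i-1$, car $i$ must reach spot $i$ for the first time while never stepping onto spot $0$ (otherwise the process fails). Since the net displacement from $i-1$ to $i$ is exactly $+1$, a trajectory using $k$ steps to the left must use $k+1$ steps to the right, for a total of $2k+1$ steps. This is the analogue of the ``$2k+i-1$ total steps'' count appearing in Theorem~\ref{thm:ExVal}, now specialized to net displacement $1$ rather than $i-1$.

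Next I would identify the number of such constrained trajectories. By the recurrence~\eqref{recu}, the admissible first-passage paths from spot $1$ to spot $i$ with $k$ left steps are enumerated by the $k$th term of $A_{i-3}^{(i-3)}$; the only change when the start is moved from $1$ to $i-1$ is that the relevant column of the triangular array shifts, so the correct count becomes the $k$th term of $A_0^{(i-3)}$. Each fixed trajectory with $k$ left steps and $k+1$ right steps occurs with the probability weight given by the product of the corresponding left- and right-step probabilities, and conditioning on the event that car $i$ actually parks (rather than exiting through spot $0$) introduces the normalizing factor $1/w^i_{i-1}$, exactly as in the theorem. Summing the path length $2k+1$ against these weights over all $k \ge 0$ then yields the claimed expression for $\E_{i-1}\!\left[\tau_\alpha^i \mid \prod_{j=1}^{i}X_\alpha^{j}=1\right]$.

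The main obstacle is the recursive verification that $A_0^{(i-3)}(k)$ is genuinely the correct enumerator of the length-$(2k+1)$ first-passage paths from $i-1$ to $i$ that avoid spot $0$. Concretely, one must check that decrementing the subscript of the sequence from $i-3$ down to $0$ corresponds precisely to relocating the walk's start from spot $1$ to spot $i-1$ while preserving both boundary behaviors (absorption upon reaching $i$, forbidden passage through $0$); this is exactly where the indexing conventions of~\eqref{recu} must be applied with care. Once this identification is secured, the remaining steps---assembling the step-probability weights, inserting the factor $2k+1$ for the path length, and dividing by $w^i_{i-1}$ to pass to the conditional expectation---are a direct transcription of the computation already carried out for $g_1^i$ in Theorem~\ref{thm:ExVal}.
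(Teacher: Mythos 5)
Your proposal follows the paper's own route exactly: the paper proves this corollary with a one-line appeal to ``a similar argument as in the proof of Theorem~\ref{thm:ExVal},'' and your outline---net displacement $+1$, so $k$ left steps force $k+1$ right steps and $2k+1$ total steps, with the first-passage paths avoiding spot $0$ counted via the recurrence \eqref{recu} by the shifted sequence $A_0^{(i-3)}$, and the conditioning on parking supplying the normalization $1/w^i_{i-1}$---is precisely that transcription, including your fair observation that pinning down the index shift in \eqref{recu} is the only delicate point (the paper's own indexing there is not immaculate, using $A_{i+2}^{(i+2)}$ in the prose but $A_{i-3}^{(i-3)}$ in the theorem). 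One caveat: if you assemble the step weights as you describe ($k+1$ right steps of probability $p$ each, $k$ left steps of probability $q$ each), you get $p^{k+1}q^{k}$, whereas the statement prints $p^{k}q^{k+1}$; specializing the theorem's pattern $p^{k+d}q^{k}(2k+d)$ to displacement $d=1$ likewise gives $p^{k+1}q^{k}(2k+1)$, so your closing assertion that the sum ``yields the claimed expression'' glosses over what is evidently a transposition of exponents in the printed formula---you should flag the discrepancy rather than silently absorb it.
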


These results match those in Subsection \ref{sec:ExpectedTimeToPark}, yet we illustrate them again to provide an alternate proof method we find to be of independent interest. 

\section{The Parking Process is Negatively Correlated}\label{sec:NegativeCorrelation}

In this section, we begin by proving that the indicator random variables $X^i_{\alpha}$, which are equal to one if car $i$ parks during the probabilistic parking protocol with open boundaries starting with preference list $\alpha$, and zero otherwise, are \textit{pairwise negatively correlated}. 

\begin{definition}[Pairwise Negative Correlation]\label{def:PairwiseNegativeCorrelation}
A collection $\{X_1, \ldots, X_n\}$ of random variables are pairwise negatively correlated if, for all $1 \leq i<j \leq n$,
\be\label{eq:pairwiseNegCor}
\mathbb{E}\left[X_i X_j\right] \leq \mathbb{E}\left[X_i\right] \mathbb{E}\left[X_j\right]\eqpd
\ee
\end{definition}

We also show that the random variables are \textit{negatively correlated}, more generally.

\begin{definition}[Negative Correlation] \label{def:NegativeCorrelation}
    A collection $\left\{X_1, \ldots, X_n\right\}$ of random variables are negatively correlated if, for any subset $S \subseteq[n]$, we have
\[
\mathbb{E}\left[\prod_{i \in S} X_i\right] \leq \prod_{i \in S} \mathbb{E}\left[X_i\right] .
\]
Note that if $\left\{X_1, \ldots, X_n\right\}$ are independent, then the inequality holds at equality.
\end{definition}

Going forward, we will abuse the definition slightly and say that the parking process is negatively correlated. 
Notice that, if a set of indicator random variables $\{X_1, \dots, X_n$\} are negatively correlated, definitions \ref{def:PairwiseNegativeCorrelation} and \ref{def:NegativeCorrelation} become statements about joint probabilities rather than expectations. Furthermore,  using the definition of conditional probability, the property of being negatively correlated implies that one event occurring makes the others less likely to occur. 

In the case of our parking protocol with open boundaries, this is exactly as one might expect. Once a car parks, there are fewer spots in the parking lot and so the probability a car is able to park before leaving the segment should only remain the same or decrease. Note that in the context of the unbounded model on the other hand, the indicator random variables (defined analogously to $X^i_{\alpha}$) are not negatively correlated. This is because, a car parks under the unbounded model starting with $\alpha \in \idPF$ if and only if it is able to hit position $i$ before drifting off to infinity. However in the case that car $i$ drifts off to infinity, none of the cars in the set $[i+1, n]$ are able to begin the parking process, and thus park with probability zero. We will also give a proof of this fact at the end of the section.

To formalize our intuition for the model with open boundaries, we first start by proving that the random variables are pairwise negatively correlated for any $\alpha \in [n]^n$, and thus they are pairwise negatively correlated when starting the protocol with $\alpha \in \idPF$.
\begin{lemma}[Pairwise Negative Correlation of Parking]\label{lem:PairwiseNegCor}
   For any $\alpha \in [n]^n$, the sequence of indicator random variables $\{X^i_{\alpha}\}_{i \in [n]}$ defined with respect to the probabilistic parking protocol with open boundaries are pairwise negatively correlated.
\end{lemma}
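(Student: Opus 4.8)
The plan is to reduce the inequality \eqref{eq:pairwiseNegCor} to a monotonicity statement about the set of occupied spots and then establish it by a coupling. Fix $1 \le i < j \le n$ and write $A = \{X^i_\alpha = 1\}$ and $B = \{X^j_\alpha = 1\}$. Since these are indicator events, \eqref{eq:pairwiseNegCor} reads $\Pr[A \cap B] \le \Pr[A]\Pr[B]$. If $\Pr[A] \in \{0,1\}$ the inequality holds with equality, so we may assume $0 < \Pr[A] < 1$; in that regime the standard manipulation of conditional probabilities shows $\Pr[A\cap B] \le \Pr[A]\Pr[B]$ is equivalent to
\[
\Pr\left[X^j_\alpha = 1 \mid X^i_\alpha = 1\right] \le \Pr\left[X^j_\alpha = 1 \mid X^i_\alpha = 0\right].
\]
I would model each car's motion as an independent sequence of $\pm 1$ steps (right with probability $p$, left with probability $q$), mutually independent across cars, so that conditioning on $X^i_\alpha$ reweights only car $i$'s step sequence and leaves the laws of the other cars' walks untouched. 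Denote by $O_k$ the (random) set of occupied spots after cars $1,\dots,k$ have finished; car $k+1$ parks if and only if its walk reaches a spot of $[n]\setminus O_k$ before leaving $[n]$.

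The first ingredient is a \emph{monotonicity lemma}: for a fixed walk, the event that the car parks is a decreasing function of the occupied set. Concretely, if $O \subseteq O'$, then along the same trajectory the first spot empty under $O'$ is reached no earlier than the first spot empty under $O$, while the exit time from $[n]$ depends only on the trajectory; hence parking under $O'$ forces parking under $O$, i.e. $\mathrm{Park}(\omega, O') \le \mathrm{Park}(\omega, O)$.

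The second ingredient is a coupling of the two conditioned processes. I would run cars $1, \dots, i-1$ with identical walks in both worlds, so they share a common configuration $O_{i-1}$; let car $i$ park (adding a single spot $s$) in the $\{X^i_\alpha=1\}$ world and fail (adding nothing) in the $\{X^i_\alpha = 0\}$ world, so that $O_i^{(1)} = O_{i-1} \cup \{s\} \supseteq O_{i-1} = O_i^{(0)}$ irrespective of where car $i$ parks; and give cars $i+1, \dots, j$ identical walks in both worlds. The crux is a \emph{preservation lemma}: if two runs use the same walks and $O_k^{(1)} \supseteq O_k^{(0)}$, then $O_{k+1}^{(1)} \supseteq O_{k+1}^{(0)}$. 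Granting this, induction yields $O_{j-1}^{(1)} \supseteq O_{j-1}^{(0)}$, and applying the monotonicity lemma to car $j$'s common walk gives $\Pr[X^j_\alpha=1 \mid X^i_\alpha=1] \le \Pr[X^j_\alpha = 1 \mid X^i_\alpha=0]$, as required.

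I expect the preservation lemma to be the main obstacle, since the two worlds may route the same intermediate car to different spots, so neither occupied set is trivially obtained from the other. The argument is a short case analysis on the fate of car $k+1$ along the common trajectory: if it fails in the smaller world the conclusion is immediate; if it parks in the smaller world at a spot $a$ but either fails or parks at a different spot $b$ in the larger world, then $a$ is reached no later than the larger world finds (or fails to find) its spot, and the larger world's not parking at $a$ forces $a \in O_k^{(1)}$, so adjoining $a$ to $O_{k+1}^{(0)}$ keeps it inside $O_{k+1}^{(1)}$. Verifying these cases carefully, and in particular that the ``lower'' parked spot is already occupied in the larger configuration, is the technical heart of the proof; everything else is bookkeeping.
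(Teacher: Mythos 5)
Your reduction to $\Pr[X^j_\alpha=1 \mid X^i_\alpha=1] \le \Pr[X^j_\alpha=1 \mid X^i_\alpha=0]$ is fine, and your two deterministic lemmas (monotonicity of parking in the occupied set, and preservation of set inclusion when two runs share the same walks) are both correct; they are exactly the right tools for comparing two runs \emph{given the same configuration at the moment car $i$ arrives}. The genuine gap is the sentence claiming that conditioning on $X^i_\alpha$ ``reweights only car $i$'s step sequence and leaves the laws of the other cars' walks untouched.'' This is false for $i \ge 2$: the event $\{X^i_\alpha = 1\}$ is a function of the walks of cars $1,\dots,i$ jointly, since whether car $i$ parks depends on the occupied set $O_{i-1}$ left by the earlier cars. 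Conditioning on $\{X^i_\alpha=1\}$ tilts the law of the earlier walks by the factor $\Pr[X^i_\alpha=1 \mid O_{i-1}]/\Pr[X^i_\alpha=1]$, and conditioning on $\{X^i_\alpha=0\}$ tilts it in the opposite direction. Consequently your coupling, which runs cars $1,\dots,i-1$ with identical walks in both worlds, does not have the correct marginals: no coupling of $\Pr[\,\cdot \mid X^i_\alpha=1]$ and $\Pr[\,\cdot \mid X^i_\alpha=0]$ can make the pre-car-$i$ configuration almost surely equal in the two worlds, because $O_{i-1}$ has genuinely different distributions under the two conditionings (e.g.\ already for $n=4$, $\alpha=(1,1,1,1)$, $i=3$: whether car $2$ parked changes car $3$'s parking probability, so conditioning on $X^3_\alpha$ tilts the law of car $2$'s outcome).

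What your argument actually proves is the configuration-wise inequality $\Pr[X^j_\alpha = 1 \mid X^i_\alpha=1,\, O_{i-1}=O] \le \Pr[X^j_\alpha=1 \mid X^i_\alpha = 0,\, O_{i-1}=O]$ for every fixed $O$. This does not imply the averaged inequality, because the two sides must then be integrated against \emph{different} mixing measures, namely the laws of $O_{i-1}$ conditioned on $\{X^i_\alpha=1\}$ and on $\{X^i_\alpha=0\}$ respectively, and a pointwise inequality between kernels averaged against different measures can fail. Worse, the discrepancy works against you: conditioning on car $i$ parking biases $O_{i-1}$ toward configurations in which parking is easy, which tends to \emph{increase} car $j$'s conditional parking probability, so the missing step is substantive rather than cosmetic. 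To close it you would need, in addition to your two lemmas, some control of this size-biasing (for instance a stochastic-domination statement for $O_{i-1}$ under the two conditionings, compatible with your monotonicity), or you could avoid conditioning on $\{X^i_\alpha=0\}$ altogether, which is the route the paper takes: it decomposes $\Pr[X^j_\alpha=1 \mid X^i_\alpha=1]$ over the spot $\operatorname{Out}(i)$ in which car $i$ parks and compares each term directly with the unconditional probability $\Pr[X^j_\alpha=1]$, never needing a coupling of two conditioned processes.
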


\begin{proof}
Consider two indices $i,j  \in [n]$ with $i<j$. First notice that, while the probability of car $i$ parking is not influenced by car $j$ whenever $i <j$, the events are still correlated. For the left hand side of \eqref{eq:pairwiseNegCor}, we have 
\[
\Pr[X^i_\alpha X^j_\alpha=1] = \Pr[X^i_\alpha =1] \Pr[X^j_\alpha =1 | X^i_\alpha =1]\eqpd
\]
Thus, what is left to show is that 
\be\label{eq:questionmarkIneq}
\begin{split}
\Pr[X^j_\alpha =1 | X^i_\alpha =1] &\stackrel{?}{\leq} \Pr[X^j_\alpha =1]\eqpd \\
\end{split}
\ee

Let $\operatorname{Out}(i)$ be the random variable denoting the parking spot in which car $i$ parks as a result of the process, or $0$ if car $i$ does not manage to park. Notice that 
\[
\begin{split}
    \Pr\left[X_j^\alpha=1 \mid X_i^\alpha=1\right]&=\sum_{k=1}^n \Pr\left[X_j^\alpha=1 \mid \operatorname{Out}(i)=k\right] \cdot \Pr\left[\operatorname{Out}(i)=k \mid X_i^\alpha=1\right]\eqcom\\
\end{split}
\]
where for each $k$, 

\[\Pr\left[X_j^\alpha=1 \mid \operatorname{Out}(i)=k\right] \leq \Pr\left[X_j^\alpha=1, \operatorname{Out}(j) \neq k\right] \leq \Pr\left[X_j^\alpha=1\right]\eqpd
\]

Thus, we have 
  \[ 
  \begin{split}
  \Pr\left[X_j^\alpha=1 \mid X_i^\alpha=1\right]&\leq \Pr\left[X_j^\alpha=1\right]\sum_{k=1}^n \Pr\left[\operatorname{Out}(i)=k \mid X_i^\alpha=1\right]
\\
  &=\Pr\left[X_j^\alpha=1\right]
\end{split}
  \]

this shows that \eqref{eq:questionmarkIneq}, is indeed an inequality, and concludes the proof. 

\end{proof}

In the previous arguments, we rely on the fact that knowing that a later car in the parking process has been able to park does not tell us any information as to whether or not the earlier cars in the sequence were able to park. This same concept underlies the more general case which we consider next.
\begin{theorem}[General Negative Correlation]\label{thm:GeneralNegativeCorrelation}
    For any $\alpha \in [n]^n$, the sequence of random variables $\{X^i_{\alpha}\}_{i \in [n]}$ defined with respect to the probabilistic parking protocol with open boundaries are negatively correlated.
\end{theorem}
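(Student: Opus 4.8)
The plan is to generalize the conditioning argument used in Lemma~\ref{lem:PairwiseNegCor} from pairs to arbitrary subsets $S \subseteq [n]$, proceeding by induction on $|S|$. The key structural fact is that in the open boundary model, the behavior of a car $i$ depends on the earlier cars $1, 2, \ldots, i-1$ \emph{only through the set of spots they occupy} (equivalently, through the outcomes $\operatorname{Out}(1), \ldots, \operatorname{Out}(i-1)$), and \emph{not} on whether those earlier cars successfully parked. In particular, conditioning on a later car having parked gives no information that makes an earlier car \emph{more} likely to park, since each car's walk is an independent random walk whose success probability is monotone in the available (unoccupied) spots.

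First I would fix $S = \{i_1 < i_2 < \cdots < i_m\} \subseteq [n]$ and isolate the largest index $i_m$. Writing $\prod_{i \in S} X_\alpha^i = \left(\prod_{i \in S \setminus \{i_m\}} X_\alpha^i\right) X_\alpha^{i_m}$, I would condition on the configuration of occupied spots just before car $i_m$ arrives—call this the state $\mathcal{C}$ determined by $\operatorname{Out}(1), \ldots, \operatorname{Out}(i_m - 1)$. The core monotonicity claim is that for every realizable configuration $\mathcal{C}$,
\[
\Pr\left[X_\alpha^{i_m} = 1 \mid \mathcal{C}\right] \leq \Pr\left[X_\alpha^{i_m} = 1\right],
\]
because any particular configuration of parked cars occupies spots that can only obstruct (never assist) car $i_m$'s random walk to its preferred destination before exiting the segment; this is the same gambler's-ruin monotonicity that drives the pairwise case. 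Summing against the conditional distribution of $\mathcal{C}$ given $\prod_{i \in S \setminus \{i_m\}} X_\alpha^i = 1$ would then yield
\[
\E\left[\prod_{i \in S} X_\alpha^i\right] \leq \Pr\left[X_\alpha^{i_m} = 1\right] \cdot \E\left[\prod_{i \in S \setminus \{i_m\}} X_\alpha^i\right],
\]
after which the inductive hypothesis applied to $S \setminus \{i_m\}$ closes the argument.

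The main obstacle I anticipate is making the monotonicity claim fully rigorous: one must argue that conditioning on \emph{any} joint information about the earlier cars (not merely a single $\operatorname{Out}(i) = k$ as in the pairwise case) still cannot raise the parking probability of the largest-indexed car. The clean way to handle this is a coupling or a stochastic-domination argument showing that the set of spots available to car $i_m$ is stochastically larger under the unconditioned process than under any conditioning on earlier successes, combined with the fact that a single car's success probability $w^{i_m}_{\alpha_{i_m}}$ is monotone increasing in the set of free spots to its left and right. One subtlety worth flagging: the conditioning event $\prod_{i \in S \setminus \{i_m\}} X_\alpha^i = 1$ does \emph{not} fix the full history, only the successes of the cars indexed in $S \setminus \{i_m\}$, so I would be careful to condition on the genuine $\sigma$-algebra generated by the pre-$i_m$ configuration and invoke the tower property, rather than pretending the intermediate states are deterministic. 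Once monotonicity is established in this conditional form, the induction is routine.
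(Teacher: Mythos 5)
Your proposal follows essentially the same route as the paper's proof: induction on $|S|$, peeling off the largest index, conditioning on the configuration of occupied spots just before that car arrives (the paper's events $O_K$), asserting the monotonicity $\Pr\left[X^{i_m}_{\alpha}=1 \mid O_K\right] \le \Pr\left[X^{i_m}_{\alpha}=1\right]$, and closing via the tower property and the inductive hypothesis. If anything you are more careful than the paper, which simply asserts this monotonicity (``one can see that'') where you correctly flag it as the step requiring a genuine coupling or stochastic-domination argument.
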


\begin{proof}
Let $S \subset [n]$ be an arbitrary subset of the cars of size $|S|=s$. Furthermore, let $\bar{S}$ denote the subset $S$ which is ordered according to the labels of the cars such that\\$\bar{S}\coloneqq\{i_1, \dots i_s\}$ where $i_1 < \dots < i_s$. We prove the theorem by showing that 
\be\label{eq:generalizedIneq}
\prod\limits_{m =i_1}^{i_s}\Pr\left[X^m_{\alpha}=1 \mid \prod\limits_{j = i_1}^{m-1}  X^j_{\alpha}=1\right] \stackrel{?}{\leq} 
\prod\limits_{i \in S}\Pr[ X^i_{\alpha}=1] 
\ee

as the left hand side of the above inequality is precisely equal to $\Pr\left[ \prod\limits_{i \in S}X^i_{\alpha}=1\right] $ by the definition of conditional probability and the probabilistic parking protocol. 
Thus, we proceed by induction on the size of the set $S$. 
For the base case, when $S$ is of size 1, the desired inequality holds directly. 
In addition, for $S$ containing two cars, Theorem~\ref{lem:PairwiseNegCor} proves the inequality holds. 
Now assume for induction that the result holds for $|S| = k$. 
We now show the inequality holds for $|S|=k+1$. 
Again, let $\bar{S}\coloneqq \{i_1, \dots, i_{k+1}\}$ where $i_1 < \dots < i_{k+1}$. 
Thus, using the induction hypothesis, we have
\[
\begin{split}
\prod\limits_{m =i_1}^{i_{k+1}}\Pr\left[X^m_{\alpha}=1 \mid \prod\limits_{j = i_1 }^{m-1}  X^j_{\alpha}=1\right] & \leq  \Pr\left[X^{i_{k+1}}_{\alpha}=1 \mid \prod\limits_{j = i_1 }^{i_{k}}  X^j_{\alpha}=1\right]  \prod\limits_{j= i_1}^{i_k}\Pr\left[X^j_{\alpha}=1\right] \\
\end{split}
\]
and what is left to show is that 

\[
\begin{split}
 \Pr\left[X^{i_{k+1}}_{\alpha}=1 \mid \prod\limits_{j = i_1 }^{i_{k}}  X^j_{\alpha}=1\right]  \leq \Pr\left[X^{i_{k+1}}_{\alpha}=1\right].\\
\end{split}
\]
Notice that we can partition the state space, based only on which spots are occupied when car $j$ attempts to park. Thus, let $O_K$ denote the event that the spots in the set $K \subseteq [n]$ are occupied. We have
\[\Pr\left[X_{i_{k+1}}^\alpha=1 \mid \prod_{m=1}^k X_{i_m}^\alpha=1\right]=\sum\limits_{\substack{K\subseteq [n]:\\|K| =k}}\Pr\left[X_{i_{k+1}}^\alpha=1 \mid O_K\right] \cdot \Pr\left[O_K\mid \prod_{m=1}^k X_{i_m}^\alpha=1\right] \eqcom
\]
where one can see that 
\[
\Pr\left[X_{i_{k+1}}^\alpha=1 \mid O_K\right] \leq \operatorname{Pr}\left[X_{i_{k+1}}^\alpha=1\right]
\]
holds. Thus we have, 

\[
\begin{gathered}\operatorname{Pr}\left[X_{i_{k+1}}^\alpha=1 \mid \prod_{m=1}^k X_{i_m}^\alpha=1\right] \leq \operatorname{Pr}\left[X_{i_{k+1}}^\alpha=1\right] \sum\limits_{\substack{K\subseteq [n]:\\|K| =k}} \operatorname{Pr}\left[O_K \mid \prod_{m=1}^k X_{i_m}^\alpha=1\right] \\ =\operatorname{Pr}\left[X_{i_{k+1}}^\alpha=1\right]\end{gathered}
\]

which yields the desired result. 
\end{proof}
We conclude by showing that the unbounded model does not have this negative correlation property. 
\begin{lemma}[The Unbounded Model is Not Negatively Correlated]\label{lem:UnboundedNotNegativelyCorrelated}
    Let $\alpha \in [n]^n$. The sequence of random variables $\{X^i_{\alpha}\}_{i \in [n]}$ defined with respect to the unbounded probabilistic parking protocol are \textbf{not}
    negatively correlated.
\end{lemma}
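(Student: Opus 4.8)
The plan is to disprove negative correlation by exhibiting a single explicit counterexample, since the defining condition of Definition~\ref{def:NegativeCorrelation} is universally quantified over subsets $S\subseteq[n]$ and one offending set already suffices (in fact it will suffice to violate even the pairwise inequality of Definition~\ref{def:PairwiseNegativeCorrelation}). The conceptual reason the unbounded model fails is a \emph{blocking} phenomenon that produces positive, rather than negative, association: in the unbounded model a car $j$ can neither begin nor complete its search unless every earlier car has already parked, so if some car $i<j$ drifts off to infinity then car $j$ is automatically prevented from parking.

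First I would record the key structural fact that, when started from $\alpha\in\idPF$, the parking events are nested: for all $i<j$ one has the containment $\{X_\alpha^j=1\}\subseteq\{X_\alpha^i=1\}$. This holds because, as in Lemma~\ref{lem:PFWhoseOutcomeIsIdentity}, the only way car $j$ can park is at spot $j$, which requires spots $1,\dots,j-1$ to be occupied and hence requires cars $1,\dots,j-1$ to have successfully parked first. Consequently, for any $i<j$ we have $X_\alpha^i X_\alpha^j=X_\alpha^j$, so $\E[X_\alpha^i X_\alpha^j]=\Pr[X_\alpha^j=1]$.

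Next I would specialize to a concrete instance where this containment forces a strict violation. Take $n=3$ and $\alpha=(1,1,1)\in\idPF$ (so $d_1=0$, $d_2=1$, $d_3=2$), and fix any $p$ with $0\le p<\tfrac12$. Using McMullen's evaluation recorded just after Theorem~\ref{thm:UnboundedProbabilityOfParking}, namely $\Pr[X_\alpha^i=1\mid\prod_{j=1}^{i-1}X_\alpha^{j}=1]=(p/q)^{d_i}$ in this regime, together with the nesting above, I would compute $\Pr[X_\alpha^2=1]=p/q$ and $\Pr[X_\alpha^3=1]=(p/q)\cdot(p/q)^2=(p/q)^3$. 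Taking $S=\{2,3\}$ then gives
\[
\E[X_\alpha^2 X_\alpha^3]=\Pr[X_\alpha^3=1]=(p/q)^3>(p/q)^4=\Pr[X_\alpha^2=1]\,\Pr[X_\alpha^3=1],
\]
the strict inequality holding because $0<p/q<1$. This contradicts the defining inequality of negative correlation (and of pairwise negative correlation), completing the proof.

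The main subtleties to get right are the choice of regime and subset. One must take $p<\tfrac12$, since for $\tfrac12\le p\le1$ every $X_\alpha^i$ equals $1$ almost surely and all inequalities degenerate to equalities; and one must choose an $S$ avoiding the first car, since $\Pr[X_\alpha^1=1]=1$ would again force equality rather than a strict violation. The only genuinely substantive point to justify is the nesting containment $\{X_\alpha^j=1\}\subseteq\{X_\alpha^i=1\}$ for $i<j$, that is, that a car failing to park in finite time (drifting to $-\infty$ when $p<\tfrac12$) halts the protocol and deterministically zeroes out all later indicators; everything else reduces to the one-line computation above.
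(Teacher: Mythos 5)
Your proof is correct and rests on the same key fact as the paper's own argument: in the unbounded model a later car cannot park unless every earlier car has parked, so $\Pr[X_\alpha^j=1\mid X_\alpha^i=0]=0$, and negative correlation would then force $\Pr[X_\alpha^i=1]=1$, which fails for $p<\frac{1}{2}$ --- your $n=3$, $\alpha=(1,1,1)$ computation is exactly this argument instantiated at a concrete example. One small fix: exclude $p=0$ from your counterexample, since you allow $0\le p<\frac{1}{2}$ but the strictness of $(p/q)^3>(p/q)^4$ needs $p>0$ (at $p=0$ both sides vanish and equality holds).
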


\begin{proof}
Let $i, j \in [n]$, and assume without loss of generality that $i <j$. Furthermore, assume that $i \neq 1$. For the indicator random variables $\{X^i_{\alpha}\}_{i \in [n]}$ to be negatively correlated under the unbounded model, it must be the case that 

\[
\Pr[X_{\alpha}^i X_{\alpha}^j=1] \leq \Pr[X_{\alpha}^i=1] \Pr[X_{\alpha}^j=1] 
\]
which implies that 
\be\label{eq:UnboundedNotNegCorrelated1}
\Pr[X_{\alpha}^j =1 \mid X_{\alpha}^i=1] \leq \Pr[X_{\alpha}^j=1] \eqpd
\ee
But 
\be\label{eq:UnboundedNotNegCorrelated}
\Pr[X_{\alpha}^j=1] = \Pr[X_{\alpha}^j =1\mid X_{\alpha}^i=1]\Pr[X_{\alpha}^i=1] +\Pr[X_{\alpha}^j  =1\mid X_{\alpha}^i=0]\Pr[X_{\alpha}^i=0]  \eqpd
\ee
Because car $j$ can only park if car $i$ has not drifted off to infinity, the second term in the right hand side of \eqref{eq:UnboundedNotNegCorrelated} must be equal to zero. 
Thus, 
\[
\Pr[X_{\alpha}^j=1] = \Pr[X_{\alpha}^j =1\mid X_{\alpha}^i=1]\Pr[X_{\alpha}^i=1]\eqpd
\]
Substituting this into the right hand side gives \eqref{eq:UnboundedNotNegCorrelated1}
\[
\Pr[X_{\alpha}^j =1 \mid X_{\alpha}^i=1]  \stackrel{?}{\leq}  \Pr[X_{\alpha}^j =1\mid X_{\alpha}^i=1]\Pr[X_{\alpha}^i=1] \eqcom
\]
which implies that 
\[
1 \stackrel{?}{\leq}  \Pr[X_{\alpha}^i=1]\eqpd
\]
Furthermore, by Theorem \ref{thm:UnboundedProbabilityOfParking} we know that the probability that any particular car (which is not the first car) parks with probability 1, depends on the parameter $p$. Thus, it is not true that this inequality always holds, and so the probabilistic parking process with the unbounded boundary condition, is not negatively correlated in general.
\end{proof}
\subsection{Consequences of Negative Correlation}

\begin{theorem}
Let $N_{\alpha}$ denote the number of cars which are successfully able to park during the process, i.e.~$N_{\alpha} = \sum\limits^n_{i=1}X_{\alpha}^i$. 
Then, for $0\leq \delta\leq 1$, we have
\[
\begin{split}
\operatorname{Pr}\left[N_{\alpha} \geq(1+\delta) \sum\limits_{i=1}^n\Pr\left[X_{\alpha}^i =1\right]\right] &\leq\left(\frac{e^\delta}{(1+\delta)^{1+\delta}}\right)^{\sum\limits_{i=1}^n\Pr\left[X_{\alpha}^i =1\right]} \\
\end{split}
\]
and 
\[
\begin{split}
\operatorname{Pr}\left[N_{\alpha} \leq(1-\delta) \sum\limits_{i=1}^n\Pr\left[X_{\alpha}^i =1\right]\right] &\leq\left(\frac{e^\delta}{(1-\delta)^{1-\delta}}\right)^{\sum\limits_{i=1}^n\Pr\left[X_{\alpha}^i =1\right]}\eqpd\\
\end{split}
\]
\end{theorem}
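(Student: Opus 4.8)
The plan is to run the exponential moment (Chernoff) method, inserting the negative correlation of Theorem~\ref{thm:GeneralNegativeCorrelation} exactly where the classical argument would use independence. Write $p_i \coloneqq \Pr[X_\alpha^i = 1]$ and $\mu \coloneqq \sum_{i=1}^n p_i$, so that $N_\alpha = \sum_{i=1}^n X_\alpha^i$ has mean $\mu$. For the upper tail I would begin from the exponential Markov inequality: for every $t>0$,
\[
\Pr\left[N_\alpha \ge (1+\delta)\mu\right] \le e^{-t(1+\delta)\mu}\,\E\left[\prod_{i=1}^n e^{t X_\alpha^i}\right].
\]
The crux is to dominate the joint exponential moment by the product of the individual ones. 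Since each $X_\alpha^i$ is an indicator, $e^{t X_\alpha^i} = 1 + (e^t-1)X_\alpha^i$, so expanding the product gives
\[
\E\left[\prod_{i=1}^n e^{t X_\alpha^i}\right] = \sum_{S \subseteq [n]} (e^t-1)^{|S|}\,\E\left[\prod_{i \in S} X_\alpha^i\right].
\]
For $t>0$ every coefficient $(e^t-1)^{|S|}$ is nonnegative, so applying the bound $\E[\prod_{i\in S}X_\alpha^i]\le\prod_{i\in S}p_i$ of Theorem~\ref{thm:GeneralNegativeCorrelation} term by term yields $\E[\prod_i e^{tX_\alpha^i}] \le \prod_{i=1}^n\left(1+(e^t-1)p_i\right) \le e^{\mu(e^t-1)}$, using $1+x\le e^x$. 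Substituting and optimizing over $t$ (taking $t=\ln(1+\delta)$) produces the claimed upper-tail bound.

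For the lower tail the same template applies after one correction. I would use, for $t>0$,
\[
\Pr\left[N_\alpha \le (1-\delta)\mu\right] \le e^{t(1-\delta)\mu}\,\E\left[\prod_{i=1}^n e^{-t X_\alpha^i}\right],
\]
but here the naive subset expansion of $\prod_i e^{-tX_\alpha^i}$ has coefficients of alternating sign, so the product-form negative correlation cannot be applied term by term. The fix is to expand in the complementary (``failure'') indicators $Y_i \coloneqq 1 - X_\alpha^i$: writing $c \coloneqq 1-e^{-t}\in(0,1)$ we have $e^{-tX_\alpha^i} = (1-c) + c\,Y_i$, whence
\[
\E\left[\prod_{i=1}^n e^{-t X_\alpha^i}\right] = \sum_{S\subseteq[n]} c^{|S|}(1-c)^{n-|S|}\,\E\left[\prod_{i\in S}Y_i\right],
\]
and now all coefficients are nonnegative. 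Thus the lower-tail bound reduces to the complementary statement that the $Y_i$ are themselves negatively correlated, i.e.\ $\E[\prod_{i\in S}Y_i]\le\prod_{i\in S}\E[Y_i]$ for every $S$; granting this, the factorization $\E[\prod_i e^{-tX_\alpha^i}]\le\prod_i\E[e^{-tX_\alpha^i}]\le e^{-\mu(1-e^{-t})}$ follows, and optimizing over $t$ (taking $t=-\ln(1-\delta)$) gives the stated bound.

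The main obstacle is therefore the complementary negative-correlation inequality for the failure indicators $Y_i$, which does not follow formally from Theorem~\ref{thm:GeneralNegativeCorrelation}. I would establish it by re-running the conditioning argument of that theorem with the roles of parking and non-parking interchanged: in the open-boundary model a car that fails drifts off the segment and occupies no spot, so conditioning on a set of earlier cars \emph{all failing} can only leave more spots open when a later car arrives, which can only increase that car's probability of parking and hence decrease its probability of failing. Making this precise requires the monotonicity that $\Pr[Y_{i_{k+1}}=1\mid O_K]$ is increasing in the occupancy $K$ together with the fact that conditioning on earlier failures stochastically reduces the occupancy seen by car $i_{k+1}$; combining these gives $\Pr[Y_{i_{k+1}}=1 \mid \prod_{m\le k}Y_{i_m}=1]\le\Pr[Y_{i_{k+1}}=1]$, after which the induction on $|S|$ carries through verbatim. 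With both the product-form inequality of Theorem~\ref{thm:GeneralNegativeCorrelation} and this complementary inequality in hand, the two Chernoff computations above complete the proof. (Alternatively, one may invoke a black-box concentration theorem for negatively correlated indicators, but I prefer to exhibit the moment-generating-function argument directly, since it isolates exactly which correlation inequality each tail requires.)
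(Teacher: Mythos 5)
Your proposal is correct and follows essentially the same route as the paper: a Chernoff/Hoeffding argument in which the negative correlation of Theorem~\ref{thm:GeneralNegativeCorrelation} replaces independence for the upper tail, while the lower tail is handled via the complementary negative correlation of the failure indicators $Y_i \coloneqq 1 - X_\alpha^i$, exactly as the paper does. If anything, you supply two steps the paper leaves implicit --- the indicator expansion $e^{tX_\alpha^i} = 1 + (e^t - 1)X_\alpha^i$ showing that product-form negative correlation yields the moment-generating-function factorization, and a sketch of why the $Y_i$ are negatively correlated, a fact the paper asserts without proof --- and your optimization even gives the sharper constant $e^{-\delta}$ in the lower-tail bound, which implies the stated one.
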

\begin{proof}
    The proof follows by adapting the classical proof of Hoeffding's inequality by substituting the assumption of negative correlation to an obtain the inequality \[\mathbb{E}\left[e^{\sum_i s X^{i}_{\alpha}}\right]=\mathbb{E}\left[\prod_i e^{s X^i_{\alpha}}\right] \leq \prod_i \mathbb{E}\left[e^{s X^{i}_{\alpha}}\right]\]
    in place of the assumption of independence (which would give an equality). The lower tail bound, on the other hand, follows by making use of the fact that the random variables $Y^i_{\alpha} \coloneqq 1-X_{\alpha}^i$ are also negatively correlated random variables, from which a similar proof yields the desired result.
\end{proof}

\section{Future work and Open Questions}\label{sec:future work}

There are several questions worth pursuing related the probabilistic parking process described in Section~\ref{sec:IntroducingTheModels}. 
We give some examples below.

\begin{itemize}
\item \textbf{Statistics of the Outcome.}
In general, it would be interesting to better understand statistics of the outcome of the probabilistic parking process for general $\alpha \in [n]^n$.
For example, we know that, if we remove the assumption that $\alpha \in \idPF$, at the end of the probabilistic parking process the parking lot could possibly have ``holes,'' or spots where no car has parked. 
Suppose that we have run the probabilistic parking protocol with preference list $\alpha \in [n]^n$, and define $B^{\alpha}$ to be a tuple of length $n$ where $B^{\alpha}_i = 1$ if spot $i$ is filled at the end of the parking process and $0$ otherwise. 
What is the probability that $B^{\alpha} =\beta$ for some $\beta \in \{0,1\}^n$? 
This line of research is closely related to the work of Varin, and thus the techniques in \cite{VarinGolf} might be useful.  
However, because we also insist on distinguishing between the cars throughout the process, one could study the probability that a specific car, say car $i$, parks in spot $j$ for $i,j \in [n]$, with or without the assumption that all of the cars are able to successfully park.
If all the cars successfully park, we could ask what is the probability that a specific permutation $\pi \in S_n$ is the outcome of the parking process started with a preference list $\alpha \in [n]^n$. 
    \item \textbf{Different Boundary Conditions.} 
    Two boundary conditions which we did not consider in this paper, but may be of interest are \textbf{Periodic Boundary Conditions}, i.e., running the probabilistic parking protocol on the circle $\Z/ n\Z$, and \textbf{Closed Boundaries}, i.e., the setting in which cars are free to drift around the finite set $[n]$, and     if a car attempts to exit this set at a given boundary, it instead remains in the same position and the move is canceled. 
    In the periodic boundary case, any preference list $\alpha \in [n]^n$ allows all of the cars to park with probability one (the same is true for reasonable choice of $p$ in the case of closed boundaries), and so determining the probability that all of the cars park is known. 
    However, it is of independent interest to consider the total time the parking process would require to stabilize under these boundary conditions. 
    Determining the distribution of the outcome permutation at the end of the process is also of interest. 
    For example, it is unknown what is the most likely parking outcome. 
    In the case of parking on the cycle, a similar gambler's ruin type argument should be useful in computing the expected time for the parking protocol to complete.
    \item \textbf{More Cars than Spots.} 
    Under the appropriate assumptions, if we were to run the probabilistic parking process with ``enough'' cars, eventually the parking lot should become completely full. 
    Under these assumptions one could consider the expected time it would take for this to occur, and the number of cars one would need to run through the process to reach this absorbing state. 
    In other words, it would be interesting to prove the existence of a phase transition dependent on the number of cars that run through the system.
    \item \textbf{Other Probabilistic Parking Protocols.} As mentioned in the introduction, there has been fairly extensive work studying parking on random objects such as random trees of various types, and frozen Erd\H{o}s R\'{e}nyi graphs. Comparatively few models of parking have been introduced which randomize the parking procedure itself, however. (For some other examples see \cite{DamronLyu1, DamronLyu2,durmic2022probabilistic,nadeau:hal-04262134,Scott, tian2021generalizing, VarinGolf}.) In many of these models, including the model discussed in this paper, the motion of the cars is characterized by random walks. In particular, the present model has deep connections to the interacting particle system IDLA. Thus it seems interesting to derive a new probabilistic parking protocol which relates to (or perhaps is independent of) other more classical particle models or stochastic processes.
\end{itemize}

\appendix
\section{Supplementary Results}\label{sec:appendix}
\begin{proposition}[Identity used in Lemma~\ref{lem:expectationOfAlphaSubNInWIPF}]\label{eq:BinomialIdentity1}
For any $n > 0$, the following equality holds:
    \[\sum_{j=0}^{n-1} j\binom{j+n-1}{j}=\frac{(n-1) n\binom{2 n-1}{n-1}}{n+1}.\] 
\end{proposition}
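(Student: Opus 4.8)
The plan is to reduce the $j$-weighted binomial sum to an unweighted one via an absorption identity, collapse it with the hockey-stick identity, and then rewrite the result to match the claimed closed form. Throughout I would use the symmetry $\binom{j+n-1}{j}=\binom{j+n-1}{n-1}$ so that the upper index stays fixed and only the lower index is manipulated.

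First I would absorb the weight $j$. A direct factorial computation shows
\[
j\binom{j+n-1}{n-1} = n\binom{j+n-1}{n},
\]
since both sides equal $\frac{(j+n-1)!}{(n-1)!\,(j-1)!}$. This trades the variable weight $j$ for the constant factor $n$, at the cost of raising the lower index from $n-1$ to $n$. Pulling $n$ out of the sum, observing that the $j=0$ term vanishes because $\binom{n-1}{n}=0$, and reindexing by $i=j+n-1$ (so that $i$ runs from $n$ to $2n-2$), the hockey-stick identity $\sum_{i=r}^{N}\binom{i}{r}=\binom{N+1}{r+1}$ gives
\[
\sum_{j=0}^{n-1} j\binom{j+n-1}{n-1} = n\sum_{i=n}^{2n-2}\binom{i}{n} = n\binom{2n-1}{n+1}.
\]

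Finally I would reconcile this with the stated right-hand side using the elementary ratio
\[
\frac{\binom{2n-1}{n+1}}{\binom{2n-1}{n-1}} = \frac{(n-1)!\,n!}{(n+1)!\,(n-2)!} = \frac{n-1}{n+1},
\]
so that $n\binom{2n-1}{n+1} = \frac{(n-1)n}{n+1}\binom{2n-1}{n-1}$, which is exactly the desired identity. The only step that requires a moment's attention is the absorption identity in the first paragraph; everything afterward is routine factorial bookkeeping, so I do not anticipate a genuine obstacle.
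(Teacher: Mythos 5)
Your proof is correct and follows essentially the same route as the paper's: both absorb the weight $j$ into the binomial coefficient (your identity $j\binom{j+n-1}{n-1}=n\binom{j+n-1}{n}$ is the same factorial manipulation the paper performs via $\binom{j+n-1}{n-1}=\frac{n}{j+n}\binom{j+n}{n}$), then collapse the resulting sum — your hockey-stick step $\sum_{i=n}^{2n-2}\binom{i}{n}=\binom{2n-1}{n+1}$ is the paper's ``parallel summation'' step $\sum_{k=0}^{n-2}\binom{k+n}{k}=\binom{2n-1}{n-2}$ after the symmetry $\binom{i}{n}=\binom{i}{i-n}$ — and finish with the same routine ratio computation. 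No gaps; all steps check out.
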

\begin{proof}
By some arithmetic we have that
\begin{align*}
    \sum_{j = 0}^{n-1} j \binom{j+n-1}{j} &= \sum_{j = 0}^{n-1} j \binom{j+n-1}{n-1}\\
    &= \sum_{j = 0}^{n-1} j \frac{n}{j+n}\binom{j+n}{n}\\
    &= n\sum_{j=0}^{n-1}\frac{(j+n-1)!}{(j-1)! \ n!}\\
    &= n\sum_{k = 0}^{n-2}\binom{k+n}{k}\\
    &= n \binom{2n-1}{n-2} \hfill \ \ \ \ \text{(Parallel summation)}\\
    &= \frac{(n-1) n\binom{2 n-1}{n-1}}{n+1}.\qedhere
\end{align*}
\end{proof}

\begin{proposition}[Identity used in Lemma~\ref{lem:expectationOfAlphaSubNInWIPF}]\label{eq:BinomialIdentity2}
    If $n > 0$, then
    \[\sum_{j=0}^{n - 1} j^2 \binom{j+n-1}{j} = \frac{(n-1)n^3}{(n+1)(n+2)} \binom{2n-1}{n-1}.\] 
\end{proposition}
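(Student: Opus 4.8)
The plan is to reduce this to the companion identity in Proposition~\ref{eq:BinomialIdentity1} by writing the quadratic weight as a falling factorial plus a linear term, namely $j^2 = j(j-1) + j$. This splits the sum as
\[
\sum_{j=0}^{n-1} j^2 \binom{j+n-1}{j} = \sum_{j=0}^{n-1} j(j-1)\binom{j+n-1}{j} + \sum_{j=0}^{n-1} j\binom{j+n-1}{j},
\]
where the second sum is exactly the quantity evaluated in Proposition~\ref{eq:BinomialIdentity1}, contributing $\frac{(n-1)n}{n+1}\binom{2n-1}{n-1}$. Thus the only genuinely new work is evaluating the falling-factorial sum, and I would mirror the factorial-manipulation technique already used in the proof of Proposition~\ref{eq:BinomialIdentity1}.

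For the falling-factorial sum, I would first absorb the weight into the binomial coefficient. Writing $\binom{j+n-1}{j} = \binom{j+n-1}{n-1}$ and cancelling, one gets $j(j-1)\binom{j+n-1}{n-1} = \frac{(j+n-1)!}{(n-1)!\,(j-2)!} = n(n+1)\binom{j+n-1}{n+1}$, since $\frac{(n+1)!}{(n-1)!} = n(n+1)$. The terms with $j<2$ vanish, so after the index shift $k=j-2$ the sum becomes $n(n+1)\sum_{k=0}^{n-3}\binom{k+n+1}{n+1}$, which the parallel summation (hockey stick) identity collapses to
\[
\sum_{j=0}^{n-1} j(j-1)\binom{j+n-1}{j} = n(n+1)\binom{2n-1}{n+2}.
\]

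The final step is to combine the two pieces into a single multiple of $\binom{2n-1}{n-1}$. Using $\binom{2n-1}{n+2}/\binom{2n-1}{n-1} = \frac{(n-1)(n-2)}{(n+1)(n+2)}$, the first term becomes $\frac{n(n-1)(n-2)}{n+2}\binom{2n-1}{n-1}$, and adding the Proposition~\ref{eq:BinomialIdentity1} contribution and factoring out $n(n-1)\binom{2n-1}{n-1}$ leaves the scalar $\frac{n-2}{n+2} + \frac{1}{n+1} = \frac{n^2}{(n+1)(n+2)}$, which yields the claimed right-hand side $\frac{(n-1)n^3}{(n+1)(n+2)}\binom{2n-1}{n-1}$. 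I do not expect a serious obstacle here; the only point requiring care is the bookkeeping of the index shift and the endpoint of the hockey stick sum, together with the final rational-function simplification, all of which are routine but easy to slip on.
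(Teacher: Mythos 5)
Your proof is correct; I verified each step, including the absorption $j(j-1)\binom{j+n-1}{j} = n(n+1)\binom{j+n-1}{n+1}$, the hockey-stick collapse to $n(n+1)\binom{2n-1}{n+2}$, the ratio $\binom{2n-1}{n+2}/\binom{2n-1}{n-1} = \frac{(n-1)(n-2)}{(n+1)(n+2)}$, and the final simplification $\frac{n-2}{n+2}+\frac{1}{n+1} = \frac{n^2}{(n+1)(n+2)}$; the degenerate cases $n=1,2$ are also handled automatically since the vanishing binomials make the falling-factorial piece zero. Your route differs from the paper's in an interesting way. The paper first absorbs one factor of $j$ via $j\binom{j+n-1}{j} = n\binom{j+n-1}{j-1}$, then splits $j=(j-1)+1$, reindexes (producing sums starting at $j=-1$), and applies Proposition~\ref{eq:BinomialIdentity1} at the \emph{shifted} parameter $n+1$, correcting several boundary terms by hand, while the remaining piece is handled by parallel summation. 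You instead split $j^2 = j(j-1)+j$, apply Proposition~\ref{eq:BinomialIdentity1} at $n$ itself to the linear piece, and dispose of the falling-factorial piece entirely by absorption plus hockey stick. Term by term the two decompositions are in fact the same identity in disguise (since $j\binom{j+n-1}{j} = n\binom{j+n-1}{j-1}$ and $j(j-1)\binom{j+n-1}{j} = n(j-1)\binom{j+n-1}{j-1}$), but the roles are swapped: the paper spends Proposition~\ref{eq:BinomialIdentity1} on the quadratic-type piece and the hockey stick on the easy piece, whereas you do the reverse. Your assignment is the better one: it avoids the shifted application of the companion identity and the $j=-1$ reindexing with its boundary corrections, which are exactly the places where the paper's computation is most fragile.
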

\begin{proof}
For any $n>0$ we have that 
\begin{align*}
    &\sum_{j = 0}^{n - 1} j^2 \binom{j+n-1}{j} = n\sum_{j = 0}^{n-1}j\binom{j+n-1}{j-1} \\
    & = n\sum_{j = 0}^{n-1}(j-1)\binom{j+n-1}{j-1}+n\sum_{j = 0}^{n-1}\binom{j+n-1}{j-1}\\
    & = n\sum_{j = -1}^{n-2}j\binom{j+n}{j} + n\sum_{j = -1}^{n-2}\binom{(j-1)+n}{j-1}\\
    & = n\left[\sum_{j = -1}^n j\binom{j+(n-1)-1}{j} - (n-1)\binom{2n-1}{n-1}-n\binom{2n}{n}\right] \hspace{.25in}\text{(Parallel summation)}\\
    &\hspace{.5in}+ n\binom{2n-1}{n-2} 
     \ \ \ \\
    & = n\left[\frac{n(n+1)}{n+2}\binom{2(n+1)-1}{(n+1)-1} - (n-1)\binom{2n-1}{n-1}-n\binom{2n}{n}\right] \hspace{.25in}\text{(Proposition~\ref{eq:BinomialIdentity1})}\\
    &\hspace{.5in}+ n \binom{2n-1}{n-2}  \ \ \  
    \\
    & = \frac{n^3(n-1)}{(n+1)(n+2)}\binom{2n-1}{n-1}.\qedhere
\end{align*}

\end{proof}

The following lemma gives a recursive formula for the number of weakly increasing parking functions of length $n$ whose last entry is $j$ for $j \in [n]$.
\begin{lemma}\label{lem:monotonicityofWIPF}
Let $f_n(j)$ denote the number of weakly increasing parking functions $\alpha \in \NDPF_n$ of length $n \geq 2$ with $\alpha_n = j$, for $1 < j < n$.
Then,
  \[
  f_n(j) = f_n(j-1) + f_{n-1}(j).
    \]
    Moreover, $f_n(n) = f_n(n - 1)$.
\end{lemma}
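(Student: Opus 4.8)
The plan is to give a bijective proof that partitions the set counted by $f_n(j)$ according to the value of the second-to-last entry $\alpha_{n-1}$; a quicker but less illuminating alternative is to specialize Proposition~\ref{prop:NumberOfWipf} and verify the identity algebraically. First I would fix $1 < j < n$ and set $\mathcal{F}_n(j) \coloneqq \{\alpha \in \NDPF_n : \alpha_n = j\}$, so that $f_n(j) = |\mathcal{F}_n(j)|$. Since $\alpha$ is weakly increasing, every $\alpha \in \mathcal{F}_n(j)$ satisfies $\alpha_{n-1} \le \alpha_n = j$, and I would split $\mathcal{F}_n(j)$ into the block where $\alpha_{n-1} = j$ and the block where $\alpha_{n-1} < j$.

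For the block $\alpha_{n-1} = j$, deleting the last entry sends $\alpha$ to $(\alpha_1,\dots,\alpha_{n-1})$. By Definition~\ref{def:WeaklyIncreasingParkingFunction} and the characterization in Lemma~\ref{lem:PFWhoseOutcomeIsIdentity} (membership in $\NDPF$ is governed by $\alpha_i \le i$ together with weak monotonicity), this truncation lies in $\NDPF_{n-1}$ with last entry $j$, where the hypothesis $j < n$ guarantees $j \le n-1$; appending $j$ is the inverse, giving a bijection with the set counted by $f_{n-1}(j)$. For the block $\alpha_{n-1} < j$, replacing the last entry $j$ by $j-1$ keeps the sequence weakly increasing (as $\alpha_{n-1} \le j-1$) and keeps it a parking function, so it lands in $\mathcal{F}_n(j-1)$; conversely, every $\beta \in \mathcal{F}_n(j-1)$ automatically has $\beta_{n-1} \le j-1 < j$, so raising its final entry to $j$ recovers exactly this block. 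Adding the two block sizes yields $f_n(j) = f_n(j-1) + f_{n-1}(j)$.

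For the boundary statement, I would treat $f_n(n) = f_n(n-1)$ as the degenerate case $j = n$: now $\alpha_{n-1} \le n-1 < n = \alpha_n$ forces the block $\alpha_{n-1} = j$ to be empty (equivalently $f_{n-1}(n) = 0$, since no element of $\NDPF_{n-1}$ can end in $n$), so only the ``replace $n$ by $n-1$'' bijection survives and identifies $\mathcal{F}_n(n)$ with $\mathcal{F}_n(n-1)$.

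The only delicate point is the bookkeeping at the boundary indices $n$ and $n-1$ — checking that deleting, appending, or decrementing the final entry preserves every defining inequality $\alpha_i \le i$ and weak monotonicity — but each such check reduces to a single inequality at position $n$ or $n-1$ under the hypothesis $1 < j < n$. As a sanity check and alternative route, specializing Proposition~\ref{prop:NumberOfWipf} at $i = n$ collapses the second binomial and the prefactor to the closed form $f_n(j) = \frac{n-j+1}{n}\binom{n+j-2}{j-1}$, which is the Catalan-triangle entry $C(n-1,j-1)$; the claimed recurrence and boundary relation are then precisely the defining recurrence $C(m,k) = C(m,k-1) + C(m-1,k)$ and edge relation $C(m,m) = C(m,m-1)$ of OEIS~\seqnum{A009766}, and can also be confirmed directly by elementary binomial manipulation.
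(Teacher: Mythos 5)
Your proof is correct and takes essentially the same approach as the paper: both arguments peel off the last coordinate, noting that the prefixes of the functions counted by $f_n(j)$ are exactly the elements of $\NDPF_{n-1}$ with last entry at most $j$, and the paper's set-difference computation $f_n(j)-f_n(j-1)=f_{n-1}(j)$ is precisely your partition into the delete ($\alpha_{n-1}=j$) and decrement ($\alpha_{n-1}<j$) bijections. Your boundary case $f_n(n)=f_n(n-1)$, with both sides equal to $|\NDPF_{n-1}|$, likewise matches the paper's.
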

\begin{proof}
    Let $A_n(j) \coloneqq \{\alpha \in \NDPF_n \colon \alpha_n = j\}$. Denote the set of prefixes of weakly increasing parking functions with last entry equal to $j$ by $B_n(j) \coloneqq \{(\alpha_1, \dots, \alpha_{n-1}) \colon \alpha \in A_n(j)\} \subset \NDPF_{n-1}$ and note that 
    $$
        B_n(j) = \bigsqcup_{i = 1}^j A_{n-1}(j).
    $$
    Since the elements in $A_n( j)$ are uniquely determined by their first $n-1$ coordinates, and since any prefix belonging to both $B_n(j)$ and $B_n(j - t)$ for $2 \leq t \leq j - 1$ also belongs to $B_n(j-1)$, we conclude 
    $$
        B_n(j) \setminus B_n(j-1) = A_{n-1}(j).
    $$
    Hence 
    $$
        f_n(j) - f_n(j-1) = |B_n(j)| - |B_n(j-1)| = f_{n-1}(j).
    $$
    The last part of the statement follows since $f_n(n) = |\NDPF_{n-1}| = f_n(n-1)$.
\end{proof}

It is known that the sequence $f_n(j)$ appears in the Catalan triangle read by rows \\(OEIS \seqnum{A009766}).

\begin{proposition}[Verifies Theorem~\ref{thm:ExpectedTimeOpenBoundaries}] \label{prop:SolutionToLinearSystemAppendix}
Let $0 \leq p \leq 1$ and $q \coloneqq 1 - p$. Let $i, s \in [n]$ be fixed integers and $0 \leq w_s \leq 1$ for every $s \leq i$. The equation
\be \label{eq:SolutionAppendix}
g_s= \begin{cases}
\displaystyle\frac{1}{3}(i^2 - s^2) & 
\mbox{ if } p = \frac{1}{2}\\
\displaystyle\frac{
    i\left(1 + \left(\frac{q}{p}\right)^i\right)
}{(p-q) \left(1 - \left(\frac{q}{p}\right)^i\right)} - \frac{s \left(1 + \left(\frac{q}{p}\right)^s\right)}{(p-q) \left(1 - \left(\frac{q}{p}\right)^s\right)}
&\mbox{ if } p\neq \frac{1}{2}
\end{cases}
\ee
is a solution to the linear system
\be\label{eq:LinearSystemAppendix}
\begin{cases}
g_0 =0 & \mbox{ if } s =0\\
g_i =0 & \mbox{ if } s =i\\
g_{s}w_s=pw_{s+1}\left(g_{s+1}\right)
+ qw_{s-1}\left(g_{s-1}\right)
+w_s \eqpd & \mbox{ if } 1\leq  s \leq i-1\eqpd
\end{cases}
  \ee 
\end{proposition}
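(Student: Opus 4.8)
The plan is to verify directly that the closed form \eqref{eq:SolutionAppendix} satisfies \eqref{eq:LinearSystemAppendix}, the key device being the substitution $h_s \coloneqq g_s w_s$. Here $w_s$ denotes the gambler's-ruin weight from \eqref{eq:probabilityofParkingOpen}, which satisfies $w_s = p w_{s+1} + q w_{s-1}$ together with $w_0 = 0$ and $w_i = 1$. Under this substitution the interior equation $g_s w_s = p w_{s+1} g_{s+1} + q w_{s-1} g_{s-1} + w_s$ becomes the \emph{constant-coefficient} recurrence $h_s = p h_{s+1} + q h_{s-1} + w_s$ for $1 \le s \le i-1$, because the left side is $h_s$ while $p w_{s+1} g_{s+1} = p h_{s+1}$ and $q w_{s-1} g_{s-1} = q h_{s-1}$. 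This is the main structural simplification: it trades the variable coefficients $w_{s\pm1}/w_s$ for the fixed weights $p,q$. For the boundary, substituting $s = i$ into \eqref{eq:SolutionAppendix} cancels the two terms and gives $g_i = 0$ (so $h_i = g_i w_i = 0$), while the condition $g_0 = 0$ is imposed separately and is harmless because the term $q w_0 g_0$ arising at $s = 1$ vanishes thanks to $w_0 = 0$ (equivalently $h_0 = g_0 w_0 = 0$). Thus the only real content is the interior recurrence, which I would check in the two regimes separately.

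For $p = \tfrac12$ one has $w_s = s/i$ and $h_s = g_s w_s = \frac{s(i^2 - s^2)}{3i}$, a cubic in $s$. Writing $\Delta^2 h_s \coloneqq h_{s+1} - 2 h_s + h_{s-1}$, the recurrence $h_s = \tfrac12 h_{s+1} + \tfrac12 h_{s-1} + w_s$ is equivalent to $-\tfrac12 \Delta^2 h_s = w_s$. Since $\Delta^2$ annihilates linear polynomials and $\Delta^2(s^3) = 6s$, a one-line computation gives $\Delta^2 h_s = -\frac{2s}{i}$, whence $-\tfrac12 \Delta^2 h_s = s/i = w_s$, as required.

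For $p \neq \tfrac12$, set $r \coloneqq q/p$ and reduce the claim to showing $p h_{s+1} - h_s + q h_{s-1} = -w_s$. Expanding $h_s = g_s w_s$ exhibits it as $h_s = \frac{1}{(p-q)(1-r^i)}\,\phi_s$, where $\phi_s$ is a linear combination of the four sequences $1,\ r^s,\ s,\ s r^s$. I would apply the operator $L\phi_s \coloneqq p\phi_{s+1} - \phi_s + q\phi_{s-1}$ term by term. Using $p + q = 1$ one gets $L(1) = 0$, and using in addition $p r^2 - r + q = 0$ (equivalently $r = q/p$) one gets $L(r^s) = 0$; these are precisely the two homogeneous solutions of the recurrence. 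The remaining two sequences are resonant, and direct expansion yields $L(s) = p - q$ and $L(s r^s) = -(p-q) r^s$. Assembling these with the coefficients coming from $\phi_s$ collapses everything to $L\phi_s = -(p-q)(1 - r^s)$, and dividing by the prefactor $(p-q)(1-r^i)$ returns exactly $-\frac{1-r^s}{1-r^i} = -w_s$.

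The step I expect to be the main obstacle is the resonant term $L(s r^s)$ in the case $p \neq \tfrac12$: it hinges on the identity $p(s+1)r^2 - s r + q(s-1) = -(p-q)\,r$, which follows from $p + q = 1$ and $r = q/p$ but is the most delicate bookkeeping in the argument, since it is where the linear-in-$s$ factor and the geometric factor $r^s$ interact. Once this identity is established, the remaining assembly and the $p = \tfrac12$ case are routine substitutions.
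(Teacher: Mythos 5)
Your proof is correct, but it is organized quite differently from the paper's. The paper verifies the interior equation head-on, keeping the variable coefficients: it divides through by $w_s$, invokes the ratio identities $\tfrac{w_{s+1}}{w_s}=\tfrac{p\cdot p^{s}-q\cdot q^{s}}{p(p^{s}-q^{s})}$ and $\tfrac{w_{s-1}}{w_s}=\tfrac{q\cdot p^{s}-p\cdot q^{s}}{q(p^{s}-q^{s})}$ together with closed forms for $\bigl(1+(q/p)^{s\pm1}\bigr)/\bigl(1-(q/p)^{s\pm1}\bigr)$, and then grinds the resulting products down until the claimed formula reappears; the $p=\tfrac12$ case is dismissed with ``a similar argument works.'' You instead pass to $h_s\coloneqq g_s w_s$, observe that the system is \emph{already} a constant-coefficient inhomogeneous recurrence $h_s=p\,h_{s+1}+q\,h_{s-1}+w_s$ in these variables, and verify it by linearity of the operator $L\phi_s=p\phi_{s+1}-\phi_s+q\phi_{s-1}$ on the four basis sequences $1,\,r^s,\,s,\,s r^s$ (with $r=q/p$), using that $1$ and $r^s$ span the homogeneous solutions and computing the two resonant terms $L(s)=p-q$ and $L(s r^s)=-(p-q)r^s$; the $p=\tfrac12$ case becomes the one-line computation $-\tfrac12\Delta^2 h_s=s/i$. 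What your route buys is structural transparency (it exhibits the formula as the standard particular-plus-homogeneous solution of a linear difference equation, rather than as an algebraic coincidence), a uniform treatment of both regimes, and a cleaner handling of the $s=0$ boundary: you correctly note that the closed form \eqref{eq:SolutionAppendix} is indeterminate at $s=0$ (the second term is $0/0$) and that this is harmless because $g_0$ only enters the system through $q w_0 g_0$ with $w_0=0$ --- equivalently $h_0=0$ --- a point the paper glosses over with ``easily shown.'' What the paper's route buys is self-containedness at the level of the stated hypotheses: it works directly with the quantities named in the proposition without introducing auxiliary operators. One caveat worth flagging for both proofs: the proposition as literally stated allows arbitrary $0\le w_s\le 1$, under which the claim is false; both your argument and the paper's tacitly require $w_s$ to be the gambler's-ruin probabilities of \eqref{eq:probabilityofParkingOpen}, and you are the one who makes this hypothesis explicit at the outset, which is the right thing to do.
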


\begin{proof}
    
The case of $g_0=g_i=0$ is easily shown to be satisfied. For $1 \leq s \leq i-1$, use that
\be\label{eq:ratioOfProb}
\frac{w_{s+1}}{w_s} = \frac{\left(p\cdot p^{s}-q \cdot q^{s}\right)}{p\left(p^{s}-q^{s}\right)} \ \ \mbox{ and } \ \ \frac{w_{s-1}}{w_s} = \frac{\left(q\cdot p^{s}-p\cdot q^{s}\right)}{q\left(p^{s}-q^{s}\right)} \eqcom \ee
as well as the fact that 
\be\label{eq:Substitution2}\frac{
    \left(1 + \left(\frac{q}{p}\right)^{s+1}\right)
}{ \left(1 - \left(\frac{q}{p}\right)^{s+1}\right)} =\frac{p \cdot p^{s}+q\cdot q^{s}}{p\cdot p^{s}-q\cdot q^{s}}\ \ \mbox{ and } \ \ \frac{\left(1 + \left(\frac{q}{p}\right)^{s-1}\right)
}{ \left(1 - \left(\frac{q}{p}\right)^{s-1}\right)} =\frac{q \cdot p^{s}+p\cdot q^{s}}{q\cdot p^{s}-p\cdot q^{s}} \eqpd \ee

Next we evaluate
the last case in \eqref{eq:LinearSystemAppendix}
using \eqref{eq:SolutionAppendix}. By \eqref{eq:ratioOfProb}, $p\frac{w_{s+1}}{w} + q\frac{w_{s-1}}{w} =1$ and so our only concern with respect to each term on the right hand side of \eqref{eq:SolutionAppendix}, is the $- \frac{s \left(1 + \left(\frac{q}{p}\right)^{s\pm1}\right)}{(p-q) \left(1 - \left(\frac{q}{p}\right)^{s\pm1}\right)}$ contribution. Thus, using \eqref{eq:ratioOfProb} and \eqref{eq:Substitution2}, we have that 
\begin{align*}
    g_s &=\frac{
    i\left(1 + \left(\frac{q}{p}\right)^i\right)
}{(p-q) \left(1 - \left(\frac{q}{p}\right)^i\right)}- \frac{1}{(p-q)(p^s-q^s)}\left[\frac{(s+1)\left(p\cdot p^{s}-q \cdot q^{s}\right) \left(p \cdot p^{s}+q\cdot q^{s}\right)}{p\cdot p^{s}-q\cdot q^{s}} \right.\\
&\hspace{.25in}+ 
\left.\frac{(s-1)\left(q\cdot p^{s}-p \cdot q^{s}\right) \left(q\cdot p^{s}+p\cdot q^{s}\right)}{q\cdot p^{s}-p\cdot q^{s}} \right]+1\\
&=\frac{
    i\left(1 + \left(\frac{q}{p}\right)^i\right)
}{(p-q) \left(1 - \left(\frac{q}{p}\right)^i\right)}- \frac{1}{(p-q)(p^s-q^s)}\left[(s+1)\left(p^{s+1}+ q^{s+1}\right) \right.\\
&\hspace{.25in}+ \left.(s-1)\left(qp^{s}+p q^{s}\right)\right]+1\\
&=\frac{
    i\left(1 + \left(\frac{q}{p}\right)^i\right)
}{(p-q) \left(1 - \left(\frac{q}{p}\right)^i\right)}- \frac{1}{(p-q)(p^s-q^s)}\left[(s\left(p^{s}+ q^{s}\right)+(p-q)(p^s-q^s)\right]+1\\
&=\frac{
    i\left(1 + \left(\frac{q}{p}\right)^i\right)
}{(p-q) \left(1 - \left(\frac{q}{p}\right)^i\right)}-\frac{s \left(1 + \left(\frac{q}{p}\right)^s\right)}{(p-q) \left(1 - \left(\frac{q}{p}\right)^s\right)}-1+1\eqcom
\end{align*}
as expected. 
A similar argument works when $p = \frac{1}{2}$.
\end{proof}

\section*{Acknowledgments}
The authors gratefully acknowledge that this collaboration began at the 2024 Graduate Research Workshop in Combinatorics, which is supported in part by NSF Grant DMS~--~1953445. The authors would like to sincerely thank Steve Butler for initial contributions and providing a key insight in Proposition~\ref{prop:NumberOfWipf}. The authors also thank Alice Contat, Ivailo Hartarsky, and Philippe Nadeau for fruitful discussion, and for introducing us to similar models in the literature.
Finally, we would like to thank Persi Diaconis for taking the time to speak with us about this project, for helpful suggestions, and for offering his unique perspective on the model. Parts of this project were also presented at the Oberwolfach Mini-Workshop: Mixing Times in the Kardar-Parisi-Zhang Universality Class where  useful discoveries were made, and helpful discussions were had.
\bibliographystyle{plain}
\bibliography{bibliography.bib}
\end{document}